\newtheorem{thm}{Theorem}[section]
  \newtheorem{lemma}[thm]{Lemma}
  \newtheorem{cor}[thm]{Corollary}
  \newtheorem{thmx}{Theorem}
  \newtheorem{corx}[thmx]{Corollary}
  \newtheorem{prop}[thm]{Proposition}
  \newtheorem{quest}[thm]{Question}
  \theoremstyle{definition}
 \newtheorem{defi}[thm]{Definition}
  \newtheorem{bem}[thm]{Remark}
    {\begin{proof}[Beweis]}
    {\end{proof}}
  \newtheorem{ex}[thm]{Example}
\newcommand{\norm}[1]{\lVert#1\rVert}   %Norm{} befehl
\newcommand{\abs}[1]{\lvert#1\rvert}
\newcommand{\ZZ}{\mathbb Z}
\title[Nuclear dimension of subhomogeneous twisted groupoid C*-algebras]{Nuclear dimension of subhomogeneous twisted groupoid C*-algebras and dynamic asymptotic dimension}
\author[C.\ B\"onicke]{Christian B\"onicke}
\address{School of Mathematics, Statistics and Physics, Newcastle University, Newcastle upon Tyne NE1 7RU, United Kingdom}
\email{christian.bonicke@ncl.ac.uk}
\author[K.\ Li]{Kang Li}
\address{Department of Mathematics, Friedrich-Alexander-Universität Erlangen-Nürnberg,  Cauerstraße 11, 91058 Erlangen, Germany}
\email{kang.li@fau.de}
\date{\today}
\begin{document}

\begin{abstract}
We characterise subhomogeneity for twisted \'etale groupoid $\mathrm{C}^*$-algebras and obtain an upper bound on their nuclear dimension. As an application, we remove the principality assumption in recent results on upper bounds on the nuclear dimension of a twisted \'etale groupoid $\mathrm{C}^*$-algebra in terms of the dynamic asymptotic dimension of the groupoid and the covering dimension of its unit space. As a non-principal example, we show that the dynamic asymptotic dimension of any minimal (not necessarily free) action of the infinite dihedral group $D_\infty$ on an infinite compact Hausdorff space $X$ is always one. So if we further assume that $X$ is second-countable and has finite covering dimension, then $C(X)\rtimes_r D_\infty$ has finite nuclear dimension and is classifiable by its Elliott invariant.
\end{abstract}

\maketitle

\section{Introduction}
The nuclear dimension for $\mathrm{C}^*$-algebras is a non-commutative version of Lebesgue covering dimension introduced by Winter and Zacharias \cite{WZ10}. It played a crucial role in the Elliott classification programme for simple nuclear $\mathrm{C}^*$-algebras. More precisely, all non-elementary separable simple  $\mathrm{C}^*$-algebras with finite nuclear dimension satisfying the Universal Coefficient Theorem are classifiable by their Elliott invariant (see \cite{MR3583354, MR4215379, MR4215380, EGLN}). In the sequel, we refer to this class as “classifiable $\mathrm{C}^*$-algebras”.

%Given its importance in the abstract theory, it is a natural question to find bounds on the nuclear dimension in concrete examples. 

In this article, we focus on the nuclear dimension of $\mathrm{C}^*$-algebras arising from twisted \'etale groupoids. This class of $\mathrm{C}^*$-algebras is on the one hand large enough to cover many examples of interest. In fact, all classifiable $\mathrm{C}^*$-algebras admit a twisted \'etale groupoid model \cite{L20}. More generally, every $\mathrm{C}^*$-algebra admitting a Cartan subalgebra has a twisted \'etale groupoid model \cite{R08,MR4489313}. On the other hand, the nuclear dimension of twisted \'etale groupoid $\mathrm{C}^*$-algebras is naturally related to the dimensions of the underlying groupoids. More specifically, Guentner, Willett, and Yu introduced in \cite{GWY17} the concept of the dynamic asymptotic dimension of an \'etale groupoid $G$ (denoted $\mathrm{dad}(G)$), and proved that the nuclear dimension of the reduced groupoid $\mathrm{C}^*$-algebra $C_r^*(G)$ is bounded above by a number depending on the dynamic asymptotic dimension of $G$ and the covering dimension of its unit space $G^0$ provided that $G$ is principal (i.e., when the isotropy groups $G_x^x=\{g\in G\mid s(g)=x=r(g)\}$ are trivial for all $x\in G^0$). Recently, this result has been generalized to \emph{twisted} \'etale groupoids \cite{CDGHV22} still under the principality assumption.

The strategy of the proofs for the main results in \cite{GWY17,CDGHV22} is as follows: when $G$ has finite dynamic asymptotic dimension $d$ and $\Sigma$ is any twist over $G$, then $C_r^*(G;\Sigma)$ can be locally approximated by $d+1$ sub-$C^*$-algebras associated with certain open precompact subgroupoids $H_i$ of $G$, $0\leq i\leq d$. This reduces the problem of bounding the nuclear dimension of $C_r^*(G;\Sigma)$ to bounding the nuclear dimension of each of these $d+1$ sub-$\mathrm{C}^*$-algebras associated with the groupoids $H_i$. As each $H_i$ is an open precompact subgroupoid of $G$, it turns out that these sub-$\mathrm{C}^*$-algebras are all subhomogeneous\footnote{Recall that a $C^*$-algebra $A$ is \emph{subhomogeneous} if there is a finite upper bound on the dimension of the irreducible representations of $A$.}. Luckily, the nuclear dimension of subhomogeneous $\mathrm{C}^*$-algebras has already been analysed and computed by Winter in \cite{W04}.

In our main result, we characterise subhomogeneity for twisted \'etale groupoid $\mathrm{C}^*$-algebras and then use Winter's result in \cite{W04} to give an estimate on their nuclear dimension (see Proposition~\ref{prop:sh}, Theorem~\ref{Thm:DimShmg} and Proposition~\ref{nucdim virab}):
\begin{thmx}\label{Thm:A}
    Let $G$ be a locally compact, second-countable, Hausdorff \'etale groupoid and let $\Sigma$ be a twist over $G$. {\color{red} For each $x\in G^0$ let $\sigma_x:G_x^x\times G_x^x\to \mathbb T$ denote the $2$-cocycle associated with the restriction of the twist $\Sigma$ to $x$.} Then $C^*(G;\Sigma)$ is subhomogeneous if and only if
    $$\sup_{x\in G^0}\sup_{\pi\in (\widehat{G_x^x,\sigma_x)}}\abs{G_x/G_x^x}\cdot \dim(H_\pi)<\infty.$$
    In this case, the nuclear dimension of $C^*(G;\Sigma)$ can be bounded above as follows:
    $$
    \dim_{\mathrm{nuc}}^{+1}(C^*(G;\Sigma))\leq \dim^{+1}(G^0)\cdot \sup_{x\in G^0} \mathrm{asdim}^{+1}(G_x^x),
    $$
    {\color{red} where $\mathrm{asdim}(G_x^x)$ denotes Gromov's asymptotic dimension of the countable group $G_x^x$.}
\end{thmx}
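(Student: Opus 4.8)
The plan is to establish the two halves of the statement separately, matching the three ingredients the paper cites: Proposition~\ref{prop:sh} for the subhomogeneity criterion, and Theorem~\ref{Thm:DimShmg} together with Proposition~\ref{nucdim virab} for the nuclear-dimension estimate.

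\emph{The subhomogeneity criterion.} The engine here is the disintegration theory for twisted étale groupoid $\mathrm C^*$-algebras: every irreducible representation of $C^*(G;\Sigma)$ is concentrated on a single orbit $[x]=r(G_x)$ and is determined, up to equivalence, by that orbit together with an irreducible $\sigma_x$-representation $\pi$ of the isotropy group $G_x^x$; when $[x]$ is finite the representation is carried by $\ell^2(G_x/G_x^x)\otimes H_\pi$, of dimension $\abs{G_x/G_x^x}\cdot\dim(H_\pi)$. Hence if the displayed supremum equals some finite $N$, every irreducible representation has dimension at most $N$ and $C^*(G;\Sigma)$ is subhomogeneous. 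For the converse one argues contrapositively using the ideals of $C^*(G;\Sigma)$ attached to invariant subsets of $G^0$. If $[x]$ is finite, it is a closed invariant set, so $C^*(G|_{[x]};\Sigma|_{[x]})\cong M_{\abs{[x]}}\bigl(C^*(G_x^x,\sigma_x)\bigr)$ is a quotient of $C^*(G;\Sigma)$; its irreducible representations are the $\mathrm{id}_{M_{\abs{[x]}}}\otimes\pi$, of dimension $\abs{G_x/G_x^x}\cdot\dim(H_\pi)$, so subhomogeneity pins these down uniformly. If some orbit $[x]$ is infinite, I claim $C^*(G;\Sigma)$ is not subhomogeneous: passing to the quotient attached to the closed invariant set $\overline{[x]}$, one either finds --- when $[x]$ is closed, so that the reduced groupoid is transitive on an infinite locally compact Hausdorff space and its $\mathrm C^*$-algebra is $\mathcal K(\ell^2)\otimes C^*(G_x^x,\sigma_x)$ --- an infinite-dimensional irreducible representation, or else $[x]$ fails to be locally closed and the Mackey--Glimm--Ramsay dichotomy shows the quotient is not even type~I. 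This last point is the step I expect to require the most care, precisely because the induced representations one can write down are reducible once orbits are infinite.

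\emph{The nuclear-dimension estimate.} Assume $C^*(G;\Sigma)$ subhomogeneous. By the criterion, all orbits are finite and all $C^*(G_x^x,\sigma_x)$ are subhomogeneous, whence (a twisted form of Moore's theorem) each $G_x^x$ is virtually abelian with uniformly bounded Hirsch rank, so in particular $\sup_x\mathrm{asdim}(G_x^x)<\infty$. Since $\widehat{C^*(G;\Sigma)}$ is parametrised by pairs $(x,\pi)$ with $x$ ranging over the finite orbits and $\pi\in\widehat{(G_x^x,\sigma_x)}$, the homogeneous subquotients in a composition series for $C^*(G;\Sigma)$ are, locally on the orbit space, of the form $C_0(Y)\otimes M_k\otimes C^*(G_x^x,\sigma_x)$ with $\dim Y\le\dim(G^0)$. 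Feeding this into Winter's analysis of subhomogeneous $\mathrm C^*$-algebras from \cite{W04} is Theorem~\ref{Thm:DimShmg}; the delicate bookkeeping is that the finitely many strata must be combined so that one pays only a single factor $\dim^{+1}(G^0)$, yielding
\[
\dim_{\mathrm{nuc}}^{+1}\bigl(C^*(G;\Sigma)\bigr)\le\dim^{+1}(G^0)\cdot\sup_{x\in G^0}\dim_{\mathrm{nuc}}^{+1}\bigl(C^*(G_x^x,\sigma_x)\bigr).
\]
It remains to invoke Proposition~\ref{nucdim virab}, namely $\dim_{\mathrm{nuc}}\bigl(C^*(\Gamma,\sigma)\bigr)\le\mathrm{asdim}(\Gamma)$ for every countable virtually abelian group $\Gamma$ and $2$-cocycle $\sigma$: if $\mathrm{asdim}(\Gamma)=d$, then at each scale $\Gamma$ decomposes into $d+1$ families of uniformly bounded sets, which transport to $(d+1)$-coloured completely positive contractive approximations of $C^*(\Gamma,\sigma)$ factoring through finite-dimensional matrix algebras indexed by those bounded sets --- the cocycle only entering the finite-dimensional matrix units, not the combinatorics, exactly as in the untwisted estimates underlying \cite{GWY17}. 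Substituting $\dim_{\mathrm{nuc}}^{+1}(C^*(G_x^x,\sigma_x))\le\mathrm{asdim}^{+1}(G_x^x)$ into the displayed inequality gives the asserted bound.
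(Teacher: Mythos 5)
Your outline follows the paper's three-step decomposition (Proposition~\ref{prop:sh}, then Theorem~\ref{Thm:DimShmg}, then Proposition~\ref{nucdim virab}), but two of the steps rest on claims that are either false in general or are precisely the points the paper has to work around. First, in the subhomogeneity criterion you invoke ``disintegration theory'' to assert that \emph{every} irreducible representation of $C^*(G;\Sigma)$ lives on a single orbit and is induced from an isotropy group. This is false for general \'etale groupoids (the irrational rotation algebra already has irreducible representations not induced from any point), and establishing it under the hypothesis that the supremum is finite is the actual content of the paper's argument: one first shows all orbits are finite, hence $G^0/G$ is $T_1$, and that $C^*(G;\Sigma)$ is CCR via Proposition~\ref{Prop:GCR/CCR}, which in turn relies on Clark's theorem applied to $C^*(\Sigma)$ and the Kleppner--Hannabuss structure theory for the twisted isotropy algebras. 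Relatedly, your contrapositive treatment of an infinite, non-locally-closed orbit appeals to a Mackey--Glimm--Ramsay dichotomy for twisted groupoid $\mathrm{C}^*$-algebras; the paper explicitly remarks, right after Proposition~\ref{Prop:GCR/CCR}, that this converse implication is \emph{not} available in the twisted setting. It is also unnecessary: by the Ionescu--Williams theorem quoted in the paper, $\mathrm{Ind}_{G_x^x}^G\,\pi$ is irreducible for \emph{every} $x$ and every irreducible $\sigma_x$-representation $\pi$ --- your worry that induction becomes reducible over infinite orbits is unfounded --- and Lemma~\ref{Lemma:DimInd} computes its dimension as $\abs{G_x/G_x^x}\cdot\dim(H_\pi)$, which disposes of the infinite-orbit case and the unbounded-fibre case in one stroke, with no case analysis.

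Second, your proof of Proposition~\ref{nucdim virab} does not work. Transporting an asymptotic-dimension decomposition of $\Gamma$ into $d+1$ uniformly bounded coloured families to coloured c.p.c.\ approximations factoring through matrix algebras indexed by those sets bounds the nuclear dimension of the uniform Roe algebra (equivalently of $\ell^\infty(\Gamma)\rtimes_r\Gamma$), not of the (twisted) group $\mathrm{C}^*$-algebra; taken at face value your argument would give $\dim_{\mathrm{nuc}}(C^*_r(F_2))\leq 1$, which is absurd since that algebra is not nuclear. The paper instead uses that $\sigma$ is type I to produce a finite-index normal abelian subgroup $N$ on which $\sigma$ trivialises, writes $C^*(\Gamma;\sigma)\cong C^*(N)\rtimes_{\alpha,\omega}\Gamma/N$ as a continuous field over $\widehat N/(\Gamma/N)$ with finite-dimensional fibres, and concludes from $\dim(\widehat N)=\mathrm{asdim}(N)=\mathrm{asdim}(\Gamma)$. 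A further, more minor, slip: subhomogeneity of $C^*(G;\Sigma)$ does not force uniformly bounded Hirsch rank or $\sup_x\mathrm{asdim}(G_x^x)<\infty$ (a bundle of groups $\mathbb Z^n$ with $n\to\infty$ has commutative, hence $1$-subhomogeneous, $\mathrm{C}^*$-algebra), though this is harmless since the theorem only asserts an upper bound. Finally, the ``bookkeeping'' you defer --- stratifying $\mathrm{Prim}_k$ by orbit size, proving the strata $X_n$ and $\widetilde X_n$ are locally compact Hausdorff, and controlling the fibres of $p_k$ --- is where most of the work of Theorem~\ref{Thm:DimShmg} actually lies.
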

Replacing \cite[Proposition~4.3]{CDGHV22} about the nuclear dimension of subhomogenous \emph{principal} twisted \'etale groupoid $\mathrm{C}^*$-algebras in the argument outlined above with our Theorem \ref{Thm:A}, allows us to drop the principality assumption from the main results of \cite{GWY17,CDGHV22} (see Theorem~\ref{nucdim}):

\begin{thmx}\label{Thm:B}
    Let $G$ be a second-countable, locally compact, Hausdorff \'etale groupoid and let $\Sigma$ be a twist over $G$. Then
    $$\dim_{\mathrm{nuc}}^{+1}(C_r^*(G;\Sigma))\leq\mathrm{dad}^{+1}(G)\cdot \dim^{+1}(G^0).$$
\end{thmx}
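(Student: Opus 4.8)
The strategy is to follow the blueprint of \cite{GWY17,CDGHV22} verbatim, substituting Theorem~\ref{Thm:A} at the one place where principality was previously used. Let $d=\mathrm{dad}(G)$; if $d=\infty$ there is nothing to prove, so assume $d<\infty$. Recall from \cite{GWY17,CDGHV22} that, for any twist $\Sigma$ over $G$, the algebra $C_r^*(G;\Sigma)$ admits local approximations by $d+1$ sub-$\mathrm{C}^*$-algebras $C_r^*(H_i;\Sigma|_{H_i})$, $0\le i\le d$, where each $H_i$ is an open, precompact subgroupoid of $G$. More precisely, for every finite $F\subset C_c(G;\Sigma)$ and every $\eps>0$ one can choose such subgroupoids so that $F$ is $\eps$-approximated inside the (finite) sum $\sum_{i=0}^d C_r^*(H_i;\Sigma|_{H_i})$, and this decomposition is what feeds the permanence properties of nuclear dimension under approximation and finite sums. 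Since this part of the argument is entirely insensitive to whether $G$ is principal, I would simply cite it from \cite{GWY17,CDGHV22}.

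The new input is the estimate on $\dim_{\mathrm{nuc}}$ of each piece. Here I would invoke Theorem~\ref{Thm:A} (equivalently Proposition~\ref{prop:sh} and Theorem~\ref{Thm:DimShmg}): because $H_i$ is open and precompact in $G$, its isotropy groups $(H_i)_x^x$ are finite, hence in particular have $\mathrm{asdim}=0$, and the quotients $(H_i)_x/(H_i)_x^x$ are uniformly finite; so $C_r^*(H_i;\Sigma|_{H_i})=C^*(H_i;\Sigma|_{H_i})$ is subhomogeneous with
\[
\dim_{\mathrm{nuc}}^{+1}\!\big(C_r^*(H_i;\Sigma|_{H_i})\big)\le \dim^{+1}(H_i^0)\cdot\sup_{x}\mathrm{asdim}^{+1}\big((H_i)_x^x\big)=\dim^{+1}(H_i^0)\le \dim^{+1}(G^0),
\]
using that $H_i^0\subseteq G^0$ is an open subspace and open subspaces do not increase covering dimension. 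One subtlety to check is that $C^*$ and $C_r^*$ coincide for these subgroupoids; this follows because precompact open subgroupoids of an étale groupoid are automatically amenable (or one can absorb this into the cited approximation result, as \cite{CDGHV22} does).

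The final assembly is the standard permanence argument. Nuclear dimension is preserved under taking inductive limits / local approximations, and for a sum of $d+1$ sub-$\mathrm{C}^*$-algebras each of nuclear dimension at most $n$ one has the bound $\dim_{\mathrm{nuc}}^{+1}\le (d+1)\cdot(n+1)$ — precisely the mechanism by which the product of $\mathrm{dad}^{+1}(G)=d+1$ and $\dim^{+1}(G^0)$ appears. Concretely: each piece contributes $\dim_{\mathrm{nuc}}^{+1}\le \dim^{+1}(G^0)$, there are $\mathrm{dad}^{+1}(G)$ pieces, and the approximation-plus-finite-sum lemma from \cite{GWY17} (or its twisted analogue in \cite{CDGHV22}) yields
\[
\dim_{\mathrm{nuc}}^{+1}\!\big(C_r^*(G;\Sigma)\big)\le \mathrm{dad}^{+1}(G)\cdot\dim^{+1}(G^0).
\]

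The only real obstacle is bookkeeping: one must make sure the approximation statement from \cite{GWY17,CDGHV22} is genuinely stated (or easily re-derived) for arbitrary, not necessarily principal, twisted groupoids — but inspection shows principality is used there only to identify the nuclear dimension of the subhomogeneous pieces, which is exactly what Theorem~\ref{Thm:A} now supplies in the general case. I expect no further difficulties.
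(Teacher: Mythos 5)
Your proposal is correct and follows essentially the same route as the paper: both reduce to the local approximation of $C_r^*(G;\Sigma)$ by the $d+1$ subalgebras attached to open precompact subgroupoids $H_i$ from \cite{GWY17,CDGHV22}, bound each piece by $\dim^{+1}(G^0)$ via Theorem~\ref{Thm:DimShmg} (using that the isotropy of each $H_i$ is uniformly finite and that $H_i$ is amenable so full and reduced coincide), and then assemble via the standard permanence lemma. The only detail the paper spells out that you elide is the reduction to compact unit space via the Alexandrov groupoid, which is subsumed in your citation of the \cite{CDGHV22} blueprint.
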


Combining \cite[Theorem~1.1]{MR3631229} and \cite[Theorem~4.7]{GWY17} with Theorem~\ref{Thm:B}, we obtain the following corollary: 
{\color{red}\begin{corx}
    Let $\Gamma$ be a finitely generated virtually nilpotent group acting on a compact metric space of finite covering dimension. If all stabilizer subgroups are finite, then both the dynamic asymptotic dimension and the nuclear dimension are finite\footnote{If we remove the condition “all stabilizer subgroups are finite”, then the dynamic asymptotic dimension is often infinite but the nuclear dimension is always finite by \cite[Corollary~10.6]{HW23}.}.
\end{corx}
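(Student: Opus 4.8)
The plan is to represent $C(X)\rtimes_r\Gamma$ as the reduced $\mathrm{C}^*$-algebra of a twisted \'etale groupoid and to invoke Theorem~\ref{Thm:B}; the whole problem then reduces to showing that the relevant groupoid has finite dynamic asymptotic dimension.

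First, I would take $G=X\rtimes\Gamma$, the transformation groupoid of the action, so that $G^0=X$, and equip $G$ with the trivial twist $\Sigma=G\times\mathbb T$. Since $\Gamma$ is finitely generated it is countable, and since $X$ is a compact metric space it is second-countable; hence $G$ is a second-countable, locally compact, Hausdorff \'etale groupoid and $C_r^*(G;\Sigma)=C_r^*(G)=C(X)\rtimes_r\Gamma$. (Moreover $\Gamma$, being virtually nilpotent, is amenable, so the reduced and full crossed products agree.) As $\dim(G^0)=\dim(X)<\infty$ by hypothesis, Theorem~\ref{Thm:B} applies to $(G,\Sigma)$ and gives
$$\dim_{\mathrm{nuc}}^{+1}\bigl(C(X)\rtimes_r\Gamma\bigr)\ \le\ \mathrm{dad}^{+1}(G)\cdot\dim^{+1}(X).$$
So it would suffice to prove $\mathrm{dad}(G)<\infty$, and this establishes both assertions at once.

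Next, I would bound $\mathrm{dad}(G)$. Being finitely generated and virtually nilpotent, $\Gamma$ has finite asymptotic dimension by \cite[Theorem~1.1]{MR3631229}; and since by hypothesis every point stabiliser $\Gamma_x$ ($x\in X$) is finite, \cite[Theorem~4.7]{GWY17} applies to the action and yields $\mathrm{dad}(G)\le\mathrm{asdim}(\Gamma)<\infty$. Feeding this back into the displayed inequality then shows that $\dim_{\mathrm{nuc}}(C(X)\rtimes_r\Gamma)$ is finite as well.

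Granting Theorem~\ref{Thm:B}, the only real content lies in the dynamic asymptotic dimension estimate, i.e.\ in transporting the asymptotic dimension of $\Gamma$ to a \emph{non-principal} transformation groupoid. The finiteness of all stabilisers is precisely what makes this possible: for any compact $K\subseteq G$ the isotropy of $G$ meeting $K$ is precompact, so an open cover of $X$ witnessing $\mathrm{asdim}(\Gamma)\le d$ along the orbits of the action (which are coarsely equivalent to $\Gamma$ up to a finite group) still witnesses $\mathrm{dad}(G)\le d$. This hypothesis is essential: already the trivial action of $\mathbb Z$ on an infinite space has infinite dynamic asymptotic dimension, whereas its crossed product still has finite nuclear dimension by \cite[Corollary~10.6]{HW23}.
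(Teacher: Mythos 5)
Your proposal is correct and follows essentially the same route as the paper: the corollary is obtained exactly by combining \cite[Theorem~1.1]{MR3631229} (finite asymptotic dimension of finitely generated virtually nilpotent groups) and \cite[Theorem~4.7]{GWY17} (which transports this to a bound on the dynamic asymptotic dimension of the transformation groupoid when all stabilizers are finite) with Theorem~\ref{Thm:B} applied to the trivially twisted transformation groupoid. The only additions in your write-up are expository glosses on why the finite-stabilizer hypothesis is needed, which do not change the argument.
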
}

Note that having finite dynamic asymptotic dimension forces all the isotropy
groups $G_x^x$ to be locally finite. Nevertheless, there are many interesting examples with finite isotropy
groups such as minimal actions of the infinite dihedral group (see Theorem~\ref{dad D_inf} and Example~\ref{ex:Edu}): 
\begin{thmx}\label{Thm:C}
The dynamic asymptotic dimension of any minimal action $D_\infty \curvearrowright X$ of the infinite dihedral group $D_\infty$ on an infinite compact Hausdorff space $X$ is one.

\smallskip

If we further assume that $X$ is second-countable and has finite covering dimension, then $C(X)\rtimes D_\infty$ is classifiable
by its Elliott invariant and has nuclear dimension at most one.
\end{thmx}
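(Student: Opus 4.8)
The statement splits into a dynamical assertion (the value of $\mathrm{dad}$) and a $C^*$-algebraic one. I plan to reduce the former to the well-understood case of free $\ZZ$-actions, and to deduce the latter from Theorem~\ref{Thm:B} together with the classification theorem and the known bound on the nuclear dimension of simple $\mathcal Z$-stable $C^*$-algebras. \emph{Step 1: the canonical copy of $\ZZ$ acts freely.} Write $D_\infty=\ZZ\rtimes\ZZ/2$, with $\ZZ=\langle a\rangle$ and a reflection $b$ satisfying $bab^{-1}=a^{-1}$. Every quotient of $D_\infty$ by a nontrivial normal subgroup is finite, so a non-faithful action of $D_\infty$ would factor through a finite group and hence have finite orbits; minimality on the infinite space $X$ therefore forces the action to be faithful. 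Now fix $k\neq 0$. The set $\mathrm{Fix}(a^k)$ is closed and $\langle a\rangle$-invariant (because $a$ commutes with $a^k$); if it were nonempty it would contain a minimal $\langle a\rangle$-set $Z$, and then $a^{\pm k}$ act trivially on $Z$. Since $ba^kb^{-1}=a^{-k}$, the element $a^k$ also acts trivially on $bZ$, hence on $Z\cup bZ$; but $Z\cup bZ$ is a nonempty closed $D_\infty$-invariant set, so by minimality $Z\cup bZ=X$, contradicting faithfulness. Hence $\mathrm{Fix}(a^k)=\emptyset$ for all $k\neq 0$, i.e.\ $\ZZ\ltimes X$ is the transformation groupoid of a free action of $\ZZ$ on the infinite compact Hausdorff space $X$.

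\emph{Step 2: the dynamic asymptotic dimension.} By \cite{GWY17} (together with the routine extension of their Rokhlin-type argument for free $\ZZ$-actions beyond the metrizable setting), $\mathrm{dad}(\ZZ\ltimes X)\leq 1$; and $\mathrm{dad}(\ZZ\ltimes X)\geq 1$ because $X$ is infinite — otherwise, applying the definition of $\mathrm{dad}$ to a finite symmetric generating set of $\ZZ$ would force $\ZZ$ to be finite. So $\mathrm{dad}(\ZZ\ltimes X)=1$. It remains to pass from the index-two open subgroupoid $\ZZ\ltimes X$ to $D_\infty\ltimes X=(\ZZ\ltimes X)\rtimes\ZZ/2$ without changing the value; I would use the permanence property that $\mathrm{dad}(G)=\mathrm{dad}(H)$ whenever $H$ is an open subgroupoid of an étale groupoid $G$ with $H^{(0)}=G^{(0)}$ and $[G:H]<\infty$ (equivalently, invariance of $\mathrm{dad}$ under forming the semidirect product of a groupoid by a finite group acting on it), the dynamical counterpart of the quasi-isometry invariance of asymptotic dimension under finite-index inclusions of groups. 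This yields $\mathrm{dad}(D_\infty\ltimes X)=1$. If the $\ZZ$-action happens to be non-minimal, then $X=Z\sqcup bZ$ with $b$ interchanging two $\langle a\rangle$-minimal clopen halves, and $D_\infty\ltimes X$ is equivalent, as a groupoid, to $\ZZ\ltimes Z$; invariance of $\mathrm{dad}$ under groupoid equivalence then gives the same conclusion, but the finite-index argument handles both cases at once.

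\emph{Step 3: classification and nuclear dimension.} Assume now $X$ is second countable with $\dim(X)<\infty$ and put $A=C(X)\rtimes D_\infty$, which equals $C_r^*(D_\infty\ltimes X)$ since $D_\infty$ is amenable. Then $A$ is separable, unital, nuclear, and satisfies the UCT. Moreover $A$ is simple: the action is minimal, and it is topologically free because elements of $\ZZ\setminus\{e\}$ act freely by Step~1, while for a reflection $g=a^nb$ one computes $ga^jg^{-1}=a^{-j}$, so $g$ maps each $\langle a\rangle$-orbit by $a^jx\mapsto a^{-j}gx$; hence $\mathrm{Fix}(g)$ meets every $\langle a\rangle$-orbit in at most one point, and since each $\langle a\rangle$-orbit is dense in $X$ or in one of the two clopen halves, $\mathrm{Fix}(g)$ has empty interior. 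By Theorem~\ref{Thm:B} and Step~2,
$$\dim_{\mathrm{nuc}}^{+1}(A)\leq \mathrm{dad}^{+1}(D_\infty\ltimes X)\cdot\dim^{+1}(X)=2\bigl(\dim(X)+1\bigr)<\infty,$$
so $A$ has finite nuclear dimension. Being in addition simple, separable, infinite-dimensional and satisfying the UCT, $A$ is classifiable by its Elliott invariant \cite{MR3583354,MR4215379,MR4215380,EGLN}; in particular it is $\mathcal Z$-stable, whence $\dim_{\mathrm{nuc}}(A)\leq 1$ by the computation of the nuclear dimension of simple $\mathcal Z$-stable $C^*$-algebras.

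\emph{Main obstacle.} The essential point is the finite-index permanence of $\mathrm{dad}$ used in Step~2: one must turn a good cover for the free $\ZZ$-groupoid into a good cover for $D_\infty\ltimes X$ while controlling the subgroupoids generated by the now non-free local data coming from the reflections. A minor additional point is the non-metrizable case of $\mathrm{dad}(\ZZ\ltimes X)\leq 1$, which should follow by inspecting the Rokhlin-tower construction and is in any case unnecessary if one only wants second-countable $X$.
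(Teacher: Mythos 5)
Your Steps 1 and 3 are essentially sound: the freeness of the canonical $\mathbb Z\leq D_\infty$ on $X$ is exactly what the paper also establishes (more directly, via $[\mathbb Z:\mathbb Z\cap \mathrm{Stab}(x)]\geq [D_\infty:\mathrm{Stab}(x)]/2=\infty$, so that $\mathrm{Stab}(x)$ is trivial or of the form $\{e,s^nt\}$), and the final passage from finite nuclear dimension to classifiability and to $\dim_{\mathrm{nuc}}\leq 1$ via $\mathcal Z$-stability is the same as in the paper. The problem is Step 2. The ``finite-index permanence'' $\mathrm{dad}(G)=\mathrm{dad}(H)$ for an open finite-index subgroupoid $H\leq G$ with $H^{(0)}=G^{(0)}$ is not an established result, you do not prove it, and it is precisely where the entire difficulty of the theorem is concentrated; you cannot cite it as the dynamical counterpart of quasi-isometry invariance of $\mathrm{asdim}$, because $\mathrm{dad}$ mixes coarse data with the topology of the orbits. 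Concretely: given a cover $U_0,U_1$ adapted to a precompact $K'\subseteq \mathbb Z\ltimes X$, an element of $\langle K\cap G|_{U_i}\rangle$ for $K=F\times X\subseteq D_\infty\ltimes X$ is a word alternating translation-blocks (which you can control by $\langle K'\cap H|_{U_i}\rangle$, say contained in $E'\times X$ with $E'$ finite) with reflections from $F$. Collapsing each translation-block produces an unbounded-length path in $U_i$ whose steps are reflections in the finite set $E'F$; pairing consecutive steps gives translations in $(E'F)^2\cap\mathbb Z$, so precompactness of $\langle K\cap G|_{U_i}\rangle$ would require $U_i$ to already be adapted to $(E'F)^2\cap\mathbb Z$ --- but $E'$ itself depends on $U_i$. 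Breaking this circularity requires genuinely new input; in effect one must choose the $U_i$ so as to separate each reflection from its (unique, by your Step 1) fixed point on each orbit, which is exactly the marker property.

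This is in fact how the paper proceeds: it invokes the result that a minimal action of an infinite virtually cyclic group with the \emph{marker property} has dynamic asymptotic dimension one, and then verifies the marker property directly. The verification uses the stabilizer analysis above: for $x$ with $\mathrm{Stab}(x)=\{e,s^nt\}$ one computes $\mathrm{Stab}(s^kx)=\{e,s^{2k+n}t\}$ and chooses $k$ with $|2k+n|>\max_{i\in I,j\in J}|i-j|$ so that $\mathrm{Stab}(s^kx)\cap(F^{-1}F\setminus\{e\})=\emptyset$, after which the Hausdorff separation argument of the free case applies to $s^kx$. So your proposal is not wrong in its architecture, but its crucial step is an unproven permanence property; to complete it you would either have to prove that permanence statement (which, in this setting, amounts to the marker-property argument anyway) or replace Step 2 by the direct verification of the marker property.
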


Finally, as an application we obtain the following corollary (see Corollary~\ref{vircyc nuc}):
\begin{corx}\label{Cor:D}
   Let $X$ be an infinite compact Hausdorff space. If $\Gamma$ is a virtually cyclic group acting minimally on $X$, then the dynamic asymptotic dimension of $\Gamma  \curvearrowright X$ is one and
\begin{align*}
\dim_{\mathrm{nuc}}(C(X)\rtimes_r \Gamma)\leq 2\cdot \dim(X)+1.
\end{align*}
\end{corx}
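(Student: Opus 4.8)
The plan is to reduce to the infinite dihedral case (Theorem~\ref{Thm:C}) by a structural analysis of virtually cyclic groups. Recall that every virtually cyclic group $\Gamma$ is, up to finite index, of one of two types: either it surjects onto $\ZZ$ with finite kernel, or it surjects onto $D_\infty$ with finite kernel (equivalently, $\Gamma$ is finite-by-$\ZZ$ or finite-by-$D_\infty$). First I would dispose of the finite case: if $\Gamma$ is finite it cannot act minimally on an infinite compact Hausdorff space, so this does not occur. Next, if $\Gamma$ is finite-by-$\ZZ$, then it is in particular finitely generated virtually $\ZZ$, hence amenable with a single end-type structure; a minimal action on an infinite space has $\mathrm{dad}=1$ by the results already invoked in the paper for the cyclic/virtually-nilpotent setting. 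The remaining, genuinely non-principal case is when $\Gamma$ is finite-by-$D_\infty$, and here I would push the argument of Theorem~\ref{Thm:C} through: the transformation groupoid $X\rtimes\Gamma$ has a finite-index open subgroupoid Morita-equivalent (or at least closely related) to $X\rtimes D_\infty$, and dynamic asymptotic dimension is insensitive to passing to finite-index subgroups and to the finite kernel, so $\mathrm{dad}(\Gamma\curvearrowright X)=\mathrm{dad}(D_\infty\curvearrowright X')=1$ for the induced minimal $D_\infty$-action $X'$.

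With $\mathrm{dad}(\Gamma\curvearrowright X)=1$ in hand, the nuclear dimension bound follows immediately from Theorem~\ref{Thm:B} applied to the (untwisted) transformation groupoid $G=X\rtimes_r\Gamma$, whose unit space is $X$: we get
\[
\dim_{\mathrm{nuc}}^{+1}(C(X)\rtimes_r\Gamma)\leq \mathrm{dad}^{+1}(G)\cdot\dim^{+1}(X)=2\cdot(\dim(X)+1),
\]
i.e. $\dim_{\mathrm{nuc}}(C(X)\rtimes_r\Gamma)\leq 2\dim(X)+1$, as claimed. Note this step requires no second-countability or metrizability hypothesis, matching the statement.

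The main obstacle I anticipate is the first half — verifying that $\mathrm{dad}=1$ (and not $0$, and not infinite) for every minimal virtually cyclic action, uniformly over the two structural types. The subtlety is that minimality of the $\Gamma$-action does \emph{not} automatically give minimality of the restricted action of a finite-index subgroup; one only gets that $X$ decomposes into finitely many clopen pieces permuted transitively by $\Gamma$, on each of which the finite-index subgroup acts minimally. So I would first reduce to the case where the finite-index normal subgroup $N\trianglelefteq\Gamma$ with $\Gamma/N\in\{\ZZ,D_\infty\}$ acts minimally on a clopen piece $Y\subseteq X$, check that $\mathrm{dad}$ of the original action equals $\mathrm{dad}$ of this piece (clopen decomposition plus finite group action preserves $\mathrm{dad}$, since a finite groupoid has $\mathrm{dad}=0$ and $\mathrm{dad}$ is stable under such extensions), and then invoke Theorem~\ref{Thm:C} (resp. the virtually-$\ZZ$ minimal case). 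The lower bound $\mathrm{dad}\geq 1$ comes from the fact that an infinite transformation groupoid of a minimal action of an infinite group cannot have $\mathrm{dad}=0$ (that would force it to be, locally, a disjoint union of finite groupoids, contradicting minimality on an infinite space).
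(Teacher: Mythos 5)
Your overall skeleton (trichotomy for infinite virtually cyclic groups, reduce to the $\ZZ$ and $D_\infty$ cases, then apply Theorem~\ref{Thm:B}) matches the paper, but your reduction goes in the wrong direction and this creates a genuine gap. You propose to pass \emph{down} to a finite-index subgroupoid ``Morita-equivalent to $X\rtimes D_\infty$''. A finite-by-$D_\infty$ group $\Gamma$ (an extension $1\to N\to\Gamma\to D_\infty\to 1$ with $N$ finite) need not contain a copy of $D_\infty$ at all, since the extension need not split, so the subgroupoid you want may simply not exist; and even in the virtually-$\ZZ$ case you then have to fight the non-minimality of restricted actions via the clopen-decomposition argument you sketch, plus an unproved permanence statement for $\mathrm{dad}$ under finite-index extensions. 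The paper instead passes \emph{up}: it quotients by the finite normal subgroup $N$ and observes that $\mathrm{dad}(\Gamma\curvearrowright X)\leq \mathrm{dad}(\Gamma/N\curvearrowright X/N)$, where $\Gamma/N\in\{\ZZ,D_\infty\}$ acts \emph{minimally} on the infinite compact Hausdorff space $X/N$ (minimality is inherited by quotient actions, so no decomposition into pieces is needed). One then quotes \cite[Theorem~3.1]{GWY17} for $\ZZ$ and Theorem~\ref{dad D_inf} for $D_\infty$. Your lower-bound argument ($\mathrm{dad}=0$ forces local finiteness, which is incompatible with an infinite-order element) agrees with the paper's.

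There is a second, smaller error: you assert that the nuclear dimension step ``requires no second-countability hypothesis.'' It does. Theorem~\ref{Thm:B} is stated only for second-countable groupoids, so it applies directly only when $X$ is second countable; for general compact Hausdorff $X$ the paper deduces the inequality by writing $C(X)\rtimes_r\Gamma$ as a direct limit over separable invariant subalgebras and invoking the permanence of nuclear dimension under inductive limits (\cite[Lemma~1.3]{MR3558205} and \cite[Proposition~2.3(iii)]{WZ10}). Your proof as written silently skips this step.
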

The article is organised as follows: In section \ref{Sec:Subhom} we present our characterisation of subhomogeneity for twisted \'etale groupoid $\mathrm{C}^*$-algebras and prove Theorem \ref{Thm:A}. In Section \ref{Sec:dad} we discuss several applications, including the proofs of Theorem~\ref{Thm:B}, Theorem~\ref{Thm:C} and Corollary~\ref{Cor:D}.
\section{Subhomogeneous twisted groupoid C*-algebras}
\label{Sec:Subhom}

\textbf{Throughout the article $G$ will denote an \'etale, locally compact Hausdorff groupoid}. We will denote its space of units by $G^0$ and the source and range maps by $s,r\colon G \to G^0$, respectively. We will use the standard notation $G_x:=\{g\in G\mid s(g)=x\}$, $G^x:=\{g\in G\mid r(g)=x\}$ and $G_x^x:=G_x\cap G^x$ for the range fibre, the source fibre, and the isotropy group at $x\in G^0$, respectively. Recall, that a \emph{twist} over $G$ is a central groupoid extension 
$$G^0\times \mathbb T\stackrel{i}{\rightarrow} \Sigma\stackrel{j}{\rightarrow} G,$$
meaning that
\begin{enumerate}
    \item $i$ is a homeomorphism onto $j^{-1}(G^0)$,
    \item $j$ is a continuous and open surjection, and
    \item the extension is central in the sense that $i(r(\sigma),z)\sigma=\sigma i(s(\sigma),z)$.
\end{enumerate}

To construct the twisted groupoid $\mathrm{C}^*$-algebra one forms the associated line bundle $L_\Sigma=\mathbb C\times \Sigma/{\sim}\rightarrow G$ and considers the set $\Gamma_c(G,L_\Sigma)$ of continuous, compactly supported sections of this bundle. Equipped with a suitably defined involution and convolution it becomes a $\ast$-algebra that can be completed with respect to suitable norms to form the full and reduced twisted groupoid $\mathrm{C}^*$-algebras $C^*(G;\Sigma)$ and $C_r^*(G;\Sigma)$, respectively (see for example \cite{R08} for the details).

For every $x\in G^0$ the twist $\Sigma$ over $G$ restricts to a twist $\mathbb T\rightarrow \Sigma_x^x\rightarrow G^x_x$ over the discrete group $G_x^x$. Such a twist gives rise to a $2$-cocycle $\sigma_x$ on $G_x^x$. In fact a bit more is true, so let us review the construction.

For each $x\in G^0$ let $\theta_x:G_x\rightarrow \Sigma_x$ be a section of the restriction of $j$ to $\Sigma_x$ such that $\theta_x(x)=x$. Then $j(\theta_{s(h)}(gh)^{-1}\theta_{s(g)}(g)\theta_{s(h)}(h))=(gh)^{-1}gh=s(h)$ for all $(g,h)\in G^{(2)}$. It follows that $\theta_{s(h)}(gh)^{-1}\theta_{s(g)}(g)\theta_{s(h)}(h)$ actually lives in the image of $\mathbb T\cong \{s(h)\}\times \mathbb T$ inside $\Sigma_{s(h)}$. So identifying $\mathbb T$ with its image in $\Sigma_{x}$ allows us to define a maps
$$\sigma_x:G_x\times G^x\rightarrow \mathbb T$$
by $\sigma_x(g,h)=\theta_{s(h)}(gh)^{-1}\theta_x(g)\theta_{s(h)}(h)$.
One can easily check that $\sigma_x$ satisfies the cocycle identity
$$\sigma_x(g_1,g_2)\sigma_y(g_1g_2,g_3)=\sigma_x(g_1,g_2g_3)\sigma_y(g_2,g_3)\ \forall g_1\in G_x, g_2\in G_y^x, g_3\in G^y$$
Since $\theta_x(x)=x$ it is also routine to verify that $\sigma_x$ is normalised in the sense that $\sigma_x(g,x)=\sigma_x(x,h)=1$ for all $g\in G_x$ and $h\in G^x$.

\subsection{A characterisation of subhomogeneity}
Recall, that a $\sigma_x$-representation of $G_x^x$ is a map
$\pi:G_x^x\rightarrow B(H_\pi)$ such that $\sigma_x(g,h)\pi(gh)=\pi(g)\pi(h)$. The set of (equivalence classes of) irreducible $\sigma_x$-representations will be denoted by $\widehat{(G_x^x,\sigma_x)}$
Central to our analysis is the procedure of inducing $\sigma_x$-representations of an isotropy group $G_x^x$ to a representation of $C^*(G;\Sigma)$. There are two approaches to this which we shall now describe. 

{\color{red} The first of these is more concrete and inspired by Mackey's classical theory of induced representations for groups (compare \cite[Chapter~8]{CSST22}, or \cite[Section~3]{CaH12} for a related construction in the groupoid case). Since none of the references describe exactly what we need, we spell out the constructions but omit the proofs as they are all classical.} Consider the complex vector space of all functions $\xi:G_x\rightarrow H_\pi$ such that $\xi(gh)=\overline{\sigma_x(g,h)}\pi_h^*\xi(g)$ for all $g\in G_x$ and $h\in G_x^x$ and such that $\sum_{gG_x^x\in G_x/G_x^x} \norm{\xi(g)}^2<\infty$.
There is an inner product on this vector space given by
$$\langle \xi,\eta\rangle:= \sum_{gG_x^x\in G_x/G_x^x} \langle \xi(g),\eta(g)\rangle.$$
Note that this is well-defined since 
$$\langle \xi(gh),\eta(gh)\rangle=\vert\sigma_x(g,h)\vert^2 \langle \pi_h^*\xi(g),\pi_h^*\eta(g)\rangle=\langle \xi(g),\eta(g)\rangle.$$
The completion with respect to this inner product is a Hilbert space and will be denoted $\mathrm{Ind}\ H_\pi$ in the sequel.
On this Hilbert space one can define the \emph{induced representation of $\pi$}
$$\mathrm{Ind}_{G_x^x}^{G}\ \pi:C^*(G;\Sigma)\rightarrow B(\mathrm{Ind}\ H_\pi)$$
for $f\in \Gamma_c(G,L_\Sigma)$, $\xi:G_x\rightarrow H_\pi$ and $g\in G_x$ by 
$$(\mathrm{Ind}_{G_x^x}^{G}\ \pi(f)\xi)(g)=\sum_{h\in G^{r(g)}} \overline{\sigma_{s(h)}(h,h^{-1}g)}f(h)\xi(h^{-1}g).$$ 

One of the advantages of this concrete approach is that it allows for a simple proof of the following result:
\begin{lemma}\label{Lemma:DimInd} Let $\pi:G_x^x\rightarrow B(H_\pi)$ be a $\sigma_x$-representation. Then the Hilbert space $\mathrm{Ind}\ H_\pi$ for the induced representation is finite-dimensional if and only if $H_\pi$ is finite dimensional and $G_x/G_x^x$ is finite. In this case we have
    $$\dim (\mathrm{Ind}\ H_\pi)=\vert G_x/G_x^x\vert \dim(H_\pi).$$
\end{lemma}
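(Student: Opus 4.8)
The plan is to analyze the structure of $\mathrm{Ind}\, H_\pi$ by choosing a set of coset representatives for $G_x/G_x^x$ and showing that, as a Hilbert space, $\mathrm{Ind}\, H_\pi$ is isometrically isomorphic to $\ell^2(G_x/G_x^x)\otimes H_\pi$, which makes the dimension count immediate. First I would fix a section $c\colon G_x/G_x^x \to G_x$ of the quotient map, so that every $g\in G_x$ has a unique factorization $g = c(gG_x^x)\,h$ with $h\in G_x^x$. The defining equivariance condition $\xi(gh) = \overline{\sigma_x(g,h)}\pi_h^*\xi(g)$ for $h\in G_x^x$ then shows that $\xi$ is completely determined by its values $\xi(c(gG_x^x))$ on the chosen representatives: writing $g = c(gG_x^x)h_0$ we get $\xi(g) = \overline{\sigma_x(c(gG_x^x),h_0)}\,\pi_{h_0}^*\,\xi(c(gG_x^x))$. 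Conversely, any assignment of vectors in $H_\pi$ to the coset representatives extends (by this same formula) to a well-defined equivariant function, using the normalization $\sigma_x(g,x)=1$ and the cocycle identity to check consistency. This establishes a linear bijection between $\mathrm{Ind}\, H_\pi$ (before completion) and the space of finitely-supported functions $G_x/G_x^x \to H_\pi$.

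Next I would observe that this bijection is isometric up to the completion: since $|\sigma_x(g,h)|=1$ and $\pi_h$ is unitary (a $\sigma_x$-representation by irreducibility, or at least unitarizable; in any case the norm is preserved by the computation already displayed before the lemma, $\langle \xi(gh),\eta(gh)\rangle = \langle\xi(g),\eta(g)\rangle$), each term $\norm{\xi(g)}^2$ depends only on the coset $gG_x^x$, and the inner product $\langle\xi,\eta\rangle = \sum_{gG_x^x}\langle\xi(g),\eta(g)\rangle$ is precisely the $\ell^2$-inner product of the corresponding functions on $G_x/G_x^x$ with values in $H_\pi$. Hence $\mathrm{Ind}\, H_\pi \cong \ell^2(G_x/G_x^x)\otimes H_\pi$ as Hilbert spaces. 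From this the statement follows at once: $\mathrm{Ind}\, H_\pi$ is finite-dimensional if and only if both $G_x/G_x^x$ is finite and $\dim H_\pi<\infty$, and in that case $\dim(\mathrm{Ind}\, H_\pi) = |G_x/G_x^x|\cdot\dim(H_\pi)$.

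I do not expect any serious obstacle here; the one point requiring a little care is checking that the formula $\xi(c(gG_x^x)h_0) := \overline{\sigma_x(c(gG_x^x),h_0)}\,\pi_{h_0}^*\,\xi(c(gG_x^x))$ genuinely defines a function satisfying the equivariance relation for \emph{all} $h\in G_x^x$ (not just those appearing in the factorization), which is where the cocycle identity for $\sigma_x$ restricted to $G_x\times G_x^x$ and the normalization are used. Since the excerpt explicitly verifies the cocycle and normalization properties of $\sigma_x$, and since the authors state they omit such classical verifications, I would present the coset-representative argument and the identification $\mathrm{Ind}\, H_\pi\cong \ell^2(G_x/G_x^x)\otimes H_\pi$, leaving the routine cocycle bookkeeping to the reader.
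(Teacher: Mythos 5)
Your proposal is correct and is essentially the same argument as the paper's: the authors also pick coset representatives $(g_j)$ and an orthonormal basis $(\xi_i)$ of $H_\pi$, define the functions $\varphi_{i,j}$ supported on $g_jG_x^x$ by exactly your extension formula, verify equivariance via the cocycle identity, and observe that these form an orthonormal basis — which is just your isomorphism $\mathrm{Ind}\, H_\pi\cong \ell^2(G_x/G_x^x)\otimes H_\pi$ written out on basis vectors. No substantive difference.
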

\begin{proof}
     Let $(\xi_i)_{i\in I}$ be an orthonormal basis for $H_\pi$ and let $(g_j)_{j\in J}$ be a complete system of representatives of the orbit space $G_x/G_x^x$. 
    For each $i\in I$ and $j\in J$ define a function $\varphi_{i,j}:G_x\rightarrow H_\pi$ by setting $$\varphi_{i,j}(g)=\left\{\begin{array}{cl}
       \overline{\sigma_x(g_j,h)}\pi_h^* \xi_i  &  g=g_jh \textit{ for some }h\in G_x^x\\
       0  & \textit{otherwise}
    \end{array}\right\}.$$
Suppose $g\in G_x$ and $h\in G_x^x$. Then there exists a unique $j\in J$ such that $g=g_jh'$, so we get $gh=g_jh'h$. Hence we can use the cocycle identity to compute
\begin{align*}
    \varphi_{i,j}(gh) &= \overline{\sigma_x(g_j,h'h)}\pi_{h'h}^*\xi_i\\
    & =\overline{\sigma_x(g_j,h')\sigma_x(g_jh',h)}\sigma_x(h',h)\pi_{h^{-1}h'^{-1}}\xi_i \\
    & = \overline{\sigma_x(g_j,h')\sigma_x(g_jh',h)}\sigma_x(h^{-1},h'^{-1})\pi_{h^{-1}h'^{-1}}\xi_i \\
    & = \overline{\sigma_x(g_jh',h)\sigma_x(g_j,h')}\pi_{h}^*\pi_{h'}^*\xi_i \\
    & = \overline{\sigma_x(g,h)}\pi_h^* \varphi_{i,j}(g)
\end{align*}

It follows, that $\varphi_{i,j}\in \mathrm{Ind} \ H_\pi$, and in fact, $(\varphi_{i,j})_{i\in I, j\in J}$ is easily seen to be an orthonormal family in $\mathrm{Ind}\ H_\pi$.

Conversely, if $\varphi:G_x\rightarrow H_\pi$ is any function as in the definition of $\mathrm{Ind}\ H_\pi$, write $\varphi(g_j)=\sum_{j\in J} \lambda_{i,j} \xi_i$. Then $\varphi=\sum_{i\in I}\sum_{j\in J} \lambda_{i,j}\varphi_{i,j}$. It follows that $(\varphi_{i,j})_{i,j}$ is an orthonormal basis, and the claims in the statement follow.
\end{proof}

An alternative route to obtain the induced representation is via Hilbert modules. {\color{red} To describe it, recall the definition of the maximal twisted group $\mathrm{C}^*$-algebra $C^*(G_x^x;\sigma_x)$: consider the complex vector space $\ell^1(G_x^x;\sigma_x)$ of all summable complex functions on $G_x^x$ equipped with the twisted convolution and involution given by
$$
f_1*f_2(g)=\sum_{h\in G_x^x}\sigma_x(h,h^{-1}g)f_1(h)f_2(h^{-1}g)
$$
and $$
f^*(g)=\overline{\sigma_x(g,g^{-1})}\overline{f(g^{-1})}.
$$
Then $C^*(G_x^x;\sigma_x)$ is the enveloping $\mathrm{C}^*$-algebra of the involutive Banach algebra $\ell^1(G_x^x;\sigma_x)$.
Equivalently, $C^*(G_x^x;\sigma_x)$ is the universal $\mathrm{C}^*$-algebra for $\sigma_x$-representations of $G_x^x$.

The following description of induction via Hilbert modules is taken from \cite[Section~4.1]{SW13} applied to the special case of line bundles and the subgroupoid $G_x^x$.} Consider the complex vector space $C_c(G_x)$ and define a $C^*(G_x^x;\sigma_x)$-valued inner product for $\xi,\eta\in C_c(G_x)$ by
    $$
    \langle \xi,\eta\rangle:=\xi^* \ast_{\sigma_x} \eta,
    $$
    where the latter denotes the twisted convolution.
Completing $C_c(G_x)$ with respect to this inner product one obtains a Hilbert $C^*(G_x^x;\sigma_x)$-module $\mathsf{X}$ on which $C^*(G;\Sigma)$ acts from the left by adjointable operators.
To induce a $\sigma_x$-representation $\pi$ of $G_x^x$ one first considers the integrated form $\Tilde{\pi}:C^*(G_x^x;\sigma_x)\rightarrow B(H_\pi)$ of $\pi$
and then forms the balanced tensor product $\mathsf{X}\otimes_{\tilde{\pi}} H_\pi$. 
The latter is then a Hilbert space on which one can define the \textit{induced representation}
$$\mathrm{Ind}_\mathsf{X} \tilde{\pi}:C^*(G;\Sigma)\rightarrow B(\mathsf{X}\otimes_{\tilde{\pi}} H_\pi )$$
by the formula
$$\mathrm{Ind}_\mathsf{X} \tilde{\pi}(f)(\varphi\otimes h)=(f\cdot\varphi)\otimes h.$$

The following result is well-known in the untwisted setting and it is routine to extend it to the twisted one. It shows that the two approaches to induction explained above yield the same result up to unitary equivalence:
\begin{lemma}\label{Lemma:Mackey Induction and elementary induction are the same}
Let $\pi:G_x^x\rightarrow B(H_\pi)$ be a $\sigma_x$-representation. Then there exists a unitary operator $U:\mathsf{X}\otimes_{\tilde{\pi}} H_\pi\rightarrow \mathrm{Ind}\ H_\pi$ that intertwines $\mathrm{Ind}_\mathsf{X} \tilde{\pi}$ and $\mathrm{Ind}_{G_x^x}^{G}\ \pi$.
\end{lemma}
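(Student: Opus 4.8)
The plan is to exhibit the intertwining unitary $U$ explicitly on a dense subspace and then verify its properties by direct computation; this is the usual way one checks that Rieffel induction along the bimodule $\mathsf{X}$ agrees with the concrete Mackey-type construction, and the passage from the classical untwisted case to the twisted one is just a matter of keeping track of cocycle factors. Since $G$ is \'etale the fibre $G_x$ is discrete, so $C_c(G_x)$ is spanned by the point masses $\delta_g$ ($g\in G_x$). On the algebraic tensor product $C_c(G_x)\odot H_\pi$ I would define a map $U_0$ by
$$
U_0(\varphi\otimes v)(g)=\sum_{k\in G_x^x}\varphi(gk)\,\overline{\sigma_x(gk,k^{-1})}\,\pi_{k^{-1}}^* v\qquad(g\in G_x),
$$
extended bilinearly. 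For $\varphi=\delta_{g_0}$ this is precisely the function $\varphi_{i,j}$ from the proof of Lemma~\ref{Lemma:DimInd} (with $g_0=g_j$, $v=\xi_i$): it is supported on the single coset $g_0G_x^x$ and takes the value $v$ at $g_0$. In particular the same cocycle manipulations as in that proof show that $U_0(\varphi\otimes v)$ satisfies the equivariance relation $\xi(gh)=\overline{\sigma_x(g,h)}\pi_h^*\xi(g)$, and since $\varphi$ has finite support $U_0(\varphi\otimes v)$ is supported on finitely many $G_x^x$-cosets, so it indeed lies in $\mathrm{Ind}\ H_\pi$.

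Next I would check that $U_0$ descends to the balanced tensor product, i.e. $U_0(\varphi\cdot f\otimes v)=U_0(\varphi\otimes\tilde\pi(f)v)$ for $f\in C_c(G_x^x;\sigma_x)$, where $\varphi\cdot f$ is the right $C^*(G_x^x;\sigma_x)$-action on $\mathsf{X}$; by bilinearity it suffices to take $f=\delta_k$, and then it is a routine computation with the twisted convolution and the identity $\pi_{ab}=\overline{\sigma_x(a,b)}\pi_a\pi_b$. Then I would show $U_0$ is isometric: one expands
$$
\big\langle U_0(\varphi_1\otimes v_1),\,U_0(\varphi_2\otimes v_2)\big\rangle_{\mathrm{Ind}\ H_\pi}=\sum_{gG_x^x\in G_x/G_x^x}\big\langle U_0(\varphi_1\otimes v_1)(g),\,U_0(\varphi_2\otimes v_2)(g)\big\rangle_{H_\pi},
$$
interchanges the (finite, by compact support) summations, and identifies the result with $\langle v_1,\tilde\pi(\varphi_1^*\ast_{\sigma_x}\varphi_2)v_2\rangle_{H_\pi}$, which by construction of the balanced tensor product is exactly $\langle \varphi_1\otimes v_1,\varphi_2\otimes v_2\rangle_{\mathsf{X}\otimes_{\tilde\pi}H_\pi}$. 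Hence $U_0$ extends to an isometry $U\colon\mathsf{X}\otimes_{\tilde\pi}H_\pi\to\mathrm{Ind}\ H_\pi$.

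It remains to see that $U$ is onto and intertwines the two induced representations. Surjectivity: the range of $U$ is closed and contains all the functions $\varphi_{i,j}$ of Lemma~\ref{Lemma:DimInd} --- now formed from an orthonormal basis of an arbitrary $H_\pi$ and a full set of representatives of $G_x/G_x^x$ --- and these span a dense subspace of $\mathrm{Ind}\ H_\pi$, so $U$ is unitary. For the intertwining property it is enough, by density and continuity, to check on elementary tensors and on $f\in\Gamma_c(G,L_\Sigma)$: by definition $\mathrm{Ind}_\mathsf{X}\tilde\pi(f)(\varphi\otimes v)=(f\cdot\varphi)\otimes v$, where $f\cdot\varphi\in C_c(G_x)$ is given by the convolution describing the left $C^*(G;\Sigma)$-action on $\mathsf{X}$; applying $U$ and comparing with the formula $(\mathrm{Ind}_{G_x^x}^{G}\ \pi(f)\xi)(g)=\sum_{h\in G^{r(g)}}\overline{\sigma_{s(h)}(h,h^{-1}g)}f(h)\xi(h^{-1}g)$ recorded above gives $U((f\cdot\varphi)\otimes v)=\mathrm{Ind}_{G_x^x}^{G}\ \pi(f)\,U(\varphi\otimes v)$.

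The only genuine work lies in the three verification steps (equivariance, compatibility with the balancing relation, and the isometry computation): each of them reduces to an identity between products of values of $\sigma_x$, proved by repeated use of the cocycle identity, the normalisation $\sigma_x(g,x)=\sigma_x(x,h)=1$, and the relation $\pi_{ab}=\overline{\sigma_x(a,b)}\pi_a\pi_b$. This is precisely the twisted-discrete-group version of the bookkeeping underlying Mackey's imprimitivity theorem --- notationally heavy but conceptually routine, with a representative instance already carried out in the proof of Lemma~\ref{Lemma:DimInd} --- which is why it is reasonable to state the lemma without reproducing the proof.
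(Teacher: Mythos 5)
Your proposal is correct and follows exactly the route the paper itself invokes: the paper states this lemma without proof, declaring it well-known in the untwisted setting and routine to extend to the twisted one, and your write-up is precisely that routine extension (explicit map on elementary tensors sending $\delta_{g_0}\otimes v$ to the corresponding function $\varphi_{i,j}$, compatibility with the balancing relation over $C^*(G_x^x;\sigma_x)$, isometry via the Rieffel inner product $\langle\varphi_1,\varphi_2\rangle=\varphi_1^*\ast_{\sigma_x}\varphi_2$, surjectivity from the orthonormal family of Lemma~\ref{Lemma:DimInd}, and the intertwining check against the displayed formula for $\mathrm{Ind}_{G_x^x}^{G}\,\pi$). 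The only caveat is that the deferred cocycle bookkeeping must be carried out with the correct adjoint convention $\pi_h^*=\overline{\sigma_x(h,h^{-1})}\,\pi_{h^{-1}}$ for a unitary $\sigma_x$-representation --- the same convention implicitly used in the paper's own computation in the proof of Lemma~\ref{Lemma:DimInd} --- but this affects only phases and not the structure of the argument.
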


Induction via the Hilbert module $\mathsf{X}$ is the preferred method to prove the following result, which is (a special case of) the main result of \cite[Theorem~6.3]{IW15} (we only use the case of line bundles).
\begin{thm}
Let $x\in G^0$ and $\pi\in \widehat{(G_x^x;\sigma_x)}$ be an irreducible representation. Then $\mathrm{Ind}_\mathsf{X} \tilde{\pi}$ is an irreducible representation of $C^*(G;\Sigma)$.
\end{thm}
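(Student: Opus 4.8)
The strategy I would adopt is the standard Hilbert-module / imprimitivity approach, which is why the authors introduced the module $\mathsf{X}$ just before the statement. The plan is to show that the commutant of $\mathrm{Ind}_\mathsf{X}\tilde\pi(C^*(G;\Sigma))$ in $B(\mathsf{X}\otimes_{\tilde\pi}H_\pi)$ is trivial. First I would recall that the bimodule $\mathsf{X}$ is in fact an \emph{imprimitivity-type bimodule} between $C^*(G;\Sigma)$ (acting on the left) and a suitable subalgebra; more precisely, by the general theory in \cite{SW13} (the very reference the authors cite for the construction of $\mathsf{X}$), the left action of $C^*(G;\Sigma)$ together with the $C^*(G_x^x;\sigma_x)$-valued inner product makes $\mathsf{X}$ a right-full Hilbert module, and the closure of the left inner products $\mathcal{K}(\mathsf{X})$-span sits inside $C^*(G;\Sigma)$ as the ideal corresponding to the orbit of $x$. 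The key input is Rieffel's theorem on induced representations from \cite{SW13}: if $\mathsf{X}$ implements a Morita equivalence (or more generally if the left action is via an ideal and $\mathsf{X}$ is right-full), then $\tilde\pi\mapsto \mathrm{Ind}_\mathsf{X}\tilde\pi$ sends irreducible representations to irreducible representations. Thus the theorem reduces to checking that $\mathsf{X}$ has the right fullness/nondegeneracy properties, which is exactly where \cite[Theorem~6.3]{IW15} does the work.

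In more self-contained terms, the key steps are as follows. (1) Identify the imprimitivity algebra: show that the closed span of $\{\langle\xi,\eta\rangle_{C^*(G;\Sigma)} : \xi,\eta\in C_c(G_x)\}$ (the \emph{left} inner products) is precisely the ideal $I_x\subseteq C^*(G;\Sigma)$ associated to the open invariant set $G\cdot x$, and that $\mathsf{X}$ is an $I_x$--$C^*(G_x^x;\sigma_x)$ imprimitivity bimodule. (2) Invoke Rieffel induction: since $\pi$ is irreducible, $\tilde\pi$ is an irreducible representation of $C^*(G_x^x;\sigma_x)$, hence $\mathrm{Ind}_\mathsf{X}\tilde\pi$ is an irreducible representation of $I_x$. (3) Upgrade from $I_x$ to $C^*(G;\Sigma)$: an irreducible representation of a closed two-sided ideal extends uniquely to an irreducible representation of the whole algebra acting on the same Hilbert space, because the representation is nondegenerate on $I_x$ and $I_x$ is essential in the subalgebra it generates; the extension to $C^*(G;\Sigma)$ it gives is again irreducible since the commutant can only shrink. (4) Finally, invoke Lemma~\ref{Lemma:Mackey Induction and elementary induction are the same} if one prefers to phrase the conclusion in terms of $\mathrm{Ind}_{G_x^x}^{G}\pi$, though for the statement as written this is not needed.

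The main obstacle I anticipate is step (1): verifying that the left inner products fill out exactly the ideal $I_x$ and that the left action is by \emph{compact} (generalized) operators on $\mathsf{X}$, rather than merely adjointable ones. This is precisely the content that \cite[Theorem~6.3]{IW15} supplies, so in the write-up I would lean on that citation rather than reproving it; the twisted case differs from the untwisted one only by carrying the cocycle $\sigma_x$ through the convolution formulas, which is routine. A secondary subtlety is that $G\cdot x$ need not be closed, so $I_x$ is a genuine (possibly non-unital, non-complemented) ideal and one must be slightly careful in step (3) that the extension of an irreducible ideal representation stays irreducible — but this is a standard fact about $C^*$-algebras (see, e.g., the extension theory of representations from ideals). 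I would therefore present the proof as: "$\mathsf{X}$ is an imprimitivity bimodule by \cite[Theorem~6.3]{IW15}; apply the Rieffel correspondence and extend from the ideal," filling in only the cocycle bookkeeping explicitly.
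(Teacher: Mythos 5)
The paper does not actually prove this theorem: it is quoted as (a special case of) \cite[Theorem~6.3]{IW15}, so your argument is necessarily a reconstruction of that reference, and the reconstruction rests on a step that fails in general. In your step (1) you assert that the closed span of the left inner products $\xi\ast\eta^{*}$, $\xi,\eta\in C_c(G_x)$, is an ideal $I_x$ of $C^*(G;\Sigma)$ ``associated to the open invariant set $G\cdot x$'' and that $\mathsf{X}$ is an $I_x$--$C^*(G_x^x;\sigma_x)$ imprimitivity bimodule. But the orbit $Gx$ is in general neither open nor locally closed in $G^0$ (think of a minimal groupoid, where every orbit is dense), so there is no such ideal; worse, the candidate left inner products $\xi\ast\eta^{*}$ are supported on the restriction $G|_{Gx}$, which is not an open subgroupoid, so they do not define elements of $C_c(G;\Sigma)$, let alone of $C^*(G;\Sigma)$. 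What the construction of \cite[Section~4.1]{SW13} yields is only a $C^*(G;\Sigma)$--$C^*(G_x^x;\sigma_x)$ \emph{correspondence}: a right Hilbert module with a left action by adjointable, not generalized compact, operators. Induction through a mere correspondence does not preserve irreducibility in general, and the entire content of the Ionescu--Williams theorem is that it nevertheless does for these particular modules; their proof is not the imprimitivity argument you sketch but a direct analysis of the commutant of the induced representation. Attributing the missing ``fullness/compactness'' of the left action to \cite[Theorem~6.3]{IW15} is therefore both inaccurate and circular, since that theorem \emph{is} the statement to be proved.

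Your outline does become correct when the orbit $Gx$ is locally closed --- for instance finite, as in the subhomogeneous regime: then $C^*(G|_{Gx};\Sigma|_{Gx})$ is a genuine subquotient of $C^*(G;\Sigma)$, the space $G_x$ is a $(G|_{Gx},G_x^x)$-equivalence, and one can induce over the resulting imprimitivity bimodule and lift irreducible representations of the subquotient back to the whole algebra; this is essentially what happens in Proposition~\ref{prop:kernels of finite dimensional reps}. But the theorem is invoked in the first half of the proof of Proposition~\ref{prop:sh} for an arbitrary $x\in G^0$, before one knows that orbits are finite, so the general (non-locally-closed) case cannot be avoided, and for it your step (1) is simply unavailable.
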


Thus, twisted irreducible representations of the isotropy groups give rise to irreducible representations of $C^*(G;\Sigma)$.
Before we turn to our characterisation of subhomogeneity, we prove some sufficient conditions for a twisted groupoid $\mathrm{C}^*$-algebra to be GCR or CCR.\footnote{A $\mathrm{C}^*$-algebra $A$ is said to be a CCR-algebra (respectively, a GCR-algebra) if the relation $\pi(A)=K(H_\pi)$ (respectively, $\pi(A)\supseteq K(H_\pi)$) is satisfied for any non-trivial irreducible representation $\pi$ of $A$ in a Hilbert space $H_\pi$.}
In the untwisted setting Clark gave a nice characterisation for $C^*(G)$ to be GCR (resp. CCR) in {\color{red} \cite[Theorems~1.3 and 1.4]{C07} and this was later improved by van Wyk in \cite[Theorem~4.3]{vW18} and \cite[Theorem~3.6]{vW19}. A full analogue of this result is not available in the literature for the twisted case. There are partial results in \cite[Proposition~3.3]{CaH12} that deal with the case of twists over a principal groupoid, but our main interest is really beyond the principal case.}
However, for twists over \'etale groupoids we can exploit an algebraic characterisation of GCR/CCR twisted group $C^*$-algebras to at least obtain sufficient conditions for $C^*(G;\Sigma)$ to be GCR or CCR as follows:
\begin{prop}\label{Prop:GCR/CCR}
Let $G$ be a second-countable, locally compact, Hausdorff étale groupoid and $\Sigma$ be a twist over $G$. Then the following hold:
\begin{enumerate}
\item If $G^0/G$ is $T_0$ and $C^*(G_x^x,\sigma_x)$ is GCR for all $x\in G^0$, then $C^*(G;\Sigma)$ is GCR.
\item If $G^0/G$ is $T_1$ and $C^*(G_x^x,\sigma_x)$ is CCR for all $x\in G^0$, then $C^*(G;\Sigma)$ is CCR.    
\end{enumerate}    
\end{prop}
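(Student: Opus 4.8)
The plan is to prove both parts uniformly by analysing a single irreducible representation of $C^*(G;\Sigma)$ via a disintegration argument, rather than by assembling a composition series. (A composition series with GCR subquotients would already give (1), but an extension of CCR algebras need not be CCR, so (2) genuinely requires a per-representation analysis.) I will use without comment two standard facts. First, the correspondence between open $G$-invariant subsets $U\subseteq G^0$ and ideals of $C^*(G;\Sigma)$: for such $U$ there is a short exact sequence $0\to C^*(G|_U;\Sigma|_U)\to C^*(G;\Sigma)\to C^*(G|_{G^0\setminus U};\Sigma|_{G^0\setminus U})\to 0$, and for an invariant locally closed $V\subseteq G^0$ the algebra $C^*(G|_V;\Sigma|_V)$ arises as a corresponding subquotient. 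Second, the Glimm--Effros--Mackey(--Ramsay) dichotomy: for our $G$, the orbit space $G^0/G$ is $T_0$ precisely when the orbit equivalence relation is smooth, equivalently when every orbit of $G$ is locally closed in $G^0$; and $G^0/G$ is $T_1$ precisely when every orbit is closed.

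Fix a nonzero irreducible representation $\rho$ of $C^*(G;\Sigma)$ on a separable Hilbert space $H_\rho$. By Renault's disintegration theorem in the twisted setting, $\rho$ is equivalent to the integrated form of a representation of the twist $\Sigma$ on a measurable Hilbert bundle over $G^0$ relative to a quasi-invariant measure $\mu$ on $G^0$, and irreducibility of $\rho$ forces the class of $\mu$ to be $G$-ergodic. Under the $T_0$ hypothesis, smoothness then implies $\mu$ is concentrated on a single orbit $\mathcal O$ of $G$ (the groupoid version of the Glimm/Effros argument; cf.\ \cite{C07}). Write $U=G^0\setminus\overline{\mathcal O}$ and $W=\overline{\mathcal O}\setminus\mathcal O$; both are invariant, $U$ is open, and $W$ is closed in $\overline{\mathcal O}$ since $\mathcal O$ is locally closed. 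Because $\mu(U)=\mu(W)=0$, the representation $\rho$ annihilates $C^*(G|_U;\Sigma|_U)$ and so factors through a representation $\rho_1$ of $C^*(G|_{\overline{\mathcal O}};\Sigma|_{\overline{\mathcal O}})$, and $\rho_1$ does not annihilate the ideal $C^*(G|_{\mathcal O};\Sigma|_{\mathcal O})$ (whose complementary quotient is $C^*(G|_W;\Sigma|_W)$). Hence $\rho_1$ restricts to a nonzero --- thus nondegenerate and irreducible --- representation $\rho_0$ of $C^*(G|_{\mathcal O};\Sigma|_{\mathcal O})$, giving $\rho(C^*(G;\Sigma))\supseteq\rho_0(C^*(G|_{\mathcal O};\Sigma|_{\mathcal O}))$. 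If moreover $G^0/G$ is $T_1$, then $W=\emptyset$ and $\overline{\mathcal O}=\mathcal O$, so $\rho_0=\rho_1$ and $\rho$ itself factors through $C^*(G|_{\mathcal O};\Sigma|_{\mathcal O})$, whence $\rho(C^*(G;\Sigma))=\rho_0(C^*(G|_{\mathcal O};\Sigma|_{\mathcal O}))$.

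It remains to identify $C^*(G|_{\mathcal O};\Sigma|_{\mathcal O})$ for the transitive \'etale groupoid $G|_{\mathcal O}$. Picking $x\in\mathcal O$, the source fibre $G_x$ carries a free right $G_x^x$-action and a left $G|_{\mathcal O}$-action making it a $(G|_{\mathcal O},G_x^x)$-equivalence; the only point requiring a check is properness of the $G_x^x$-action, which holds because $G_x^x=r^{-1}(x)\cap s^{-1}(x)$ is a discrete closed subset of the Hausdorff groupoid $G$ and hence meets every compact subset of $G_x$ in a finite set. Then the Hilbert module $\mathsf{X}$ built from $G_x$ above (now for $G|_{\mathcal O}$) becomes an imprimitivity bimodule, so $C^*(G|_{\mathcal O};\Sigma|_{\mathcal O})$ is Morita equivalent to $C^*(G_x^x,\sigma_x)$ --- this is the twisted \'etale case of the equivalence theorem, see \cite{SW13}. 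Since GCR and CCR are preserved under Morita equivalence of separable $\mathrm{C}^*$-algebras, the standing hypotheses make $C^*(G|_{\mathcal O};\Sigma|_{\mathcal O})$ GCR in case (1) and CCR in case (2). Feeding this back: in case (1), $\rho(C^*(G;\Sigma))\supseteq\rho_0(C^*(G|_{\mathcal O};\Sigma|_{\mathcal O}))\supseteq K(H_\rho)$; in case (2), $\rho(C^*(G;\Sigma))=\rho_0(C^*(G|_{\mathcal O};\Sigma|_{\mathcal O}))=K(H_\rho)$. As $\rho$ was an arbitrary irreducible representation, $C^*(G;\Sigma)$ is GCR, respectively CCR.

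The step I expect to be the main obstacle --- and the only place the $T_0$ and $T_1$ hypotheses enter --- is the reduction "ergodic quasi-invariant measure $\Rightarrow$ measure concentrated on one orbit": this is the smoothness-forces-honest-subgroups phenomenon, and making it rigorous requires the disintegration theorem in the twisted, \'etale generality together with care in matching the support of $\mu$ to the ideal structure coming from invariant subsets. The secondary technical point is the equivalence of a transitive \'etale twisted groupoid with (the twisted group $\mathrm{C}^*$-algebra of) its isotropy group --- where the properness verification is precisely where \'etaleness is used --- combined with Morita-invariance of GCR and CCR; everything else (the ideal/quotient dictionary for invariant subsets, nondegeneracy of an irreducible restriction to a non-annihilated ideal) is routine.
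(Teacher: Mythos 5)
Your proof is essentially correct, but it takes a genuinely different route from the paper's. You transplant the Clark/van Wyk strategy for the untwisted GCR/CCR characterisation directly into the twisted setting: disintegrate an irreducible representation of $C^*(G;\Sigma)$, use the Mackey--Glimm--Ramsay dichotomy (valid for second-countable locally compact groupoids, and only involving $G$, not the twist) to concentrate the ergodic quasi-invariant measure on a single locally closed orbit $\mathcal{O}$, and then identify $C^*(G|_{\mathcal{O}};\Sigma|_{\mathcal{O}})$ up to Morita equivalence with $C^*(G_x^x,\sigma_x)$. The paper deliberately avoids this machinery -- it notes that the twisted analogue of Clark's theorem is not in the literature -- and instead reduces to the \emph{untwisted} case: it uses the algebraic characterisation of type I twisted discrete groups from \cite{H81} (existence of a finite-index abelian subgroup on which $\sigma_x$ is symmetric) to deduce that GCR-ness of $C^*(G_x^x,\sigma_x)$ forces GCR-ness of $C^*(G_x^x,\sigma_x^n)$ for every $n\in\ZZ$, hence of $C^*(\Sigma_x^x)\cong\bigoplus_n C^*(G_x^x,\sigma_x^n)$; it then applies Clark's published theorem to the (non-\'etale, Haar-system) groupoid $\Sigma$, whose orbit space is homeomorphic to $G^0/G$, and finally recovers $C^*(G;\Sigma)$ as a direct summand of $C^*(\Sigma)$. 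Your approach buys more: it needs no input from \cite{H81}, and, carried out carefully, it is the natural route to the full if-and-only-if characterisation (which the paper explicitly leaves open). What it costs is the burden of verifying the disintegration theorem and the support-of-measure versus ideal-structure dictionary in the twisted \'etale generality, plus the equivalence theorem for the transitive restriction -- for the latter you check properness of the $G_x^x$-action but should also note that openness of $r\colon G_x\to\mathcal{O}$ holds because a locally closed orbit of an \'etale groupoid is discrete in its relative topology (it is a countable locally compact Hausdorff space that is homogeneous under the bisection action, hence discrete by Baire category). None of these points is a gap, but each is a place where your write-up leans on standard facts the paper chose not to have to invoke.
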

\begin{proof}
We only prove the first item as the second one can be proven in exactly the same way by replacing GCR by CCR and $T_0$ by $T_1$ throughout the proof.
{\color{red}
By \cite[Theorem~1.1]{H81} the twisted group $(G_x^x,\sigma_x)$ is GCR, or equivalently, type I if and only if there exists an abelian subgroup $\Lambda_x\leqslant G_x^x$ of finite index on which $\sigma_x$ is symmetric.} It is then clear that the $2$-cocycle $\sigma_x^n$ given by $\sigma_x^n(g,h):=\sigma_x(g,h)^n$ is also symmetric on $\Lambda_x$ for all $n\in\ZZ$ and hence $(G_x^x,\sigma_x^n)$ is type I for all $n\in \mathbb{Z}$ {\color{red} by invoking \cite[Theorem~1.1]{H81} again}. It follows that $C^*(\Sigma_x^x)\cong \bigoplus_{n\in\mathbb Z} C^*(G_x^x,\sigma_x^n)$ is type I for all $x\in G^0$. Moreover, $\Sigma^0/\Sigma\cong G^0/G$ is $T_0$ and hence \cite[Theorem~1.4]{C07} implies that $C^*(\Sigma)$ is GCR. 
  By \cite[Proposition~3.7]{IKRSW21} we have $C^*(\Sigma)\cong \bigoplus_{n\in\mathbb Z} C^*(G;\Sigma_n)$ and since quotients of GCR-algebras are GCR, we get that $C^*(G;\Sigma)$ is GCR.
\end{proof}
The converse of this result is not clear to us as there is no longer an obvious continuous inclusion $G^0/G \rightarrow \widehat{C^*(G;\Sigma)}$ that allows to pull back topological information from the spectrum to the orbit space.

%In fact, the converse is almost true as well:

%\begin{prop}\cite[Corollary~3.5]{C07}
%Let $G$ be a second-countable \'etale groupoid. Then every irreducible representation %of $C^*(G)$ factors though $C^*(G_{\mid\overline{Gx}})$ for some $x\in G^{(0)}$.
%\end{prop}
%If $G^{(0)}/G$ is $T_1$, then $C^*(G_{\mid\overline{Gx}})=C^*(G_{\mid Gx})$ and the latter is stably isomorphic to $C^*(G_x^x)$. Hence in this case every irreducible representation of $C^*(G)$ gives rise to a representation of $C^*(G_x^x)$.
%With a bit more work one can relax the $T_1$ condition on the orbit space further. In fact we have the following corollary to \cite[Theorem~4.1]{G10}.

%\begin{thm}\label{Theorem:Identification of set of irreps}
%Let $G$ be a second-countable \'etale groupoid such that $G^0/G$ is $T_0$.
%Then there is a canonical bijection

%$$\left(\bigsqcup_{x\in G^0} \widehat{G_x^x}\right)/G\rightarrow \widehat{C^*(G)}.$$
%\end{thm}

%With this description of the dual of $C^*(G)$ at hand we can characterize subhomogeneouity for \'etale groupoid $\mathrm{C}^*$-algebras.

We are now ready for the main result of this section:
\begin{prop}\label{prop:sh} Let $G$ be a second-countable, locally compact, Hausdorff étale groupoid and $\Sigma$ be a twist over $G$. Then $C^*(G;\Sigma)$ is subhomogeneous if and only if the following hold:
\begin{enumerate}
    \item $\sup_{x\in G^{(0)}}\abs{G_x/G_x^x}<\infty$, and
    \item $\sup_{x\in G^{(0)}}\sup_{\pi\in \widehat{(G_x^x,\sigma_x)}} \dim(H_\pi)<\infty.$
\end{enumerate}
\end{prop}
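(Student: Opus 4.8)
The plan is to prove both implications by combining the representation theory developed above with the structural facts about étale groupoid C*-algebras. For the forward direction, suppose $C^*(G;\Sigma)$ is subhomogeneous, say every irreducible representation has dimension at most $N$. By the theorem of Ionescu–Williams (quoted above) every irreducible $\sigma_x$-representation $\pi$ of $G_x^x$ induces to an irreducible representation $\mathrm{Ind}_\mathsf{X}\tilde\pi$ of $C^*(G;\Sigma)$, which by Lemma~\ref{Lemma:Mackey Induction and elementary induction are the same} is unitarily equivalent to $\mathrm{Ind}_{G_x^x}^G\pi$ acting on $\mathrm{Ind}\,H_\pi$. Hence $\dim(\mathrm{Ind}\,H_\pi)\leq N$, and Lemma~\ref{Lemma:DimInd} gives $\abs{G_x/G_x^x}\cdot\dim(H_\pi)\leq N$ for all $x$ and all $\pi\in\widehat{(G_x^x,\sigma_x)}$. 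Taking the trivial (one-dimensional) $\sigma_x$-representation when $\sigma_x$ is a coboundary — or more carefully, noting that $\widehat{(G_x^x,\sigma_x)}$ is always nonempty since $C^*(G_x^x,\sigma_x)\neq 0$ — yields $\abs{G_x/G_x^x}\leq N$, which is (1), and then $\dim(H_\pi)\leq N$ for all such $\pi$, which is (2).

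For the reverse direction, assume (1) and (2) hold, with common bound $N$. The first task is to show $C^*(G;\Sigma)$ is GCR, so that we have a reasonable handle on its irreducible representations. Condition (2) forces each $C^*(G_x^x,\sigma_x)$ to be subhomogeneous, hence type I, so Proposition~\ref{Prop:GCR/CCR}(1) applies once we check $G^0/G$ is $T_0$; here I would argue that (1) together with the étale structure forces the orbits to be (uniformly) finite — more precisely $\abs{G_x/G_x^x}<\infty$ means each orbit $r(G_x)$ is finite — and a groupoid with finite orbits over a locally compact Hausdorff unit space has $T_0$ (indeed $T_1$) orbit space, so in fact $C^*(G;\Sigma)$ is even CCR by Proposition~\ref{Prop:GCR/CCR}(2). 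Wait — one should be careful that finiteness of orbits alone does not immediately give $T_1$ orbit space without some properness input; I would instead invoke that $G$ being étale with all orbits finite makes it a proper groupoid in the appropriate sense, or alternatively bypass the GCR step entirely and argue directly as in the next paragraph.

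The cleanest route for the reverse direction is to show that \emph{every} irreducible representation of $C^*(G;\Sigma)$ is (up to equivalence) of the form $\mathrm{Ind}_{G_x^x}^G\pi$ for some $x\in G^0$ and some $\pi\in\widehat{(G_x^x,\sigma_x)}$; combined with Lemma~\ref{Lemma:DimInd} and the bounds (1) and (2), this immediately gives $\dim(H_\rho)\leq N^2$ for every irreducible $\rho$, hence subhomogeneity. This "all irreducibles are induced" statement is where the real work lies: I expect to restrict an irreducible representation $\rho$ of $C^*(G;\Sigma)$ to the canonical abelian (or Cartan) subalgebra coming from $C_0(G^0)$, disintegrate over its spectrum, use that the orbits are finite (by (1)) to localise $\rho$ at a single orbit, and then identify $\rho$ with a representation induced from the isotropy group at a point of that orbit — this is exactly the twisted, étale analogue of the Effros–Hahn / Mackey imprimitivity picture, and versions of it appear in the work of Ionescu–Williams and van Wyk in the GCR setting. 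This step — establishing that the induced representations exhaust $\widehat{C^*(G;\Sigma)}$ — is the main obstacle; once it is in place, the dimension count via Lemma~\ref{Lemma:DimInd} is purely bookkeeping. If a direct disintegration argument proves delicate, the fallback is to first secure that $C^*(G;\Sigma)$ is CCR (via the $T_1$ orbit-space argument above, using finiteness of orbits), invoke the known classification of the spectrum of a CCR étale groupoid C*-algebra in terms of the isotropy data, and read off the uniform dimension bound from (1) and (2).
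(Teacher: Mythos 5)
Your proposal is correct and follows essentially the same route as the paper: the forward direction via the Ionescu--Williams irreducibility of induced representations together with Lemma~\ref{Lemma:DimInd}, and the reverse direction via finite orbits $\Rightarrow$ $T_1$ orbit space $\Rightarrow$ CCR (Proposition~\ref{Prop:GCR/CCR}) $\Rightarrow$ every irreducible representation is induced from an isotropy group, followed by the same dimension count. The one place you hesitate is unnecessary: no properness input is needed, since condition (1) makes every orbit a finite subset of the Hausdorff space $G^0$, hence closed, which is exactly the $T_1$ condition on $G^0/G$ --- this is precisely the paper's argument.
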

\begin{proof}
    Suppose first that $C^*(G;\Sigma)$ is subhomogeneous. Then there exists a uniform upper bound $N\in \mathbb N$ for the dimensions of irreducible representations of $C^*(G;\Sigma)$. Let $x\in G^0$ and $\pi$ be a twisted irreducible representation of $G_x^x$. Then $\mathrm{Ind}_{G_x^x}^G \pi$ is an irreducible representation of $C^*(G;\Sigma)$ and Lemma \ref{Lemma:DimInd} implies $$\vert G_x/G_x^x\vert\dim(H_\pi)\leq\dim (\mathrm{Ind}_{G_x^x}^G \pi)\leq N.$$
    Since $x$ and $\pi$ were arbitrary, this implies conditions (1) and (2).

    Conversely, we know that for every $x\in G^0$ there is a bijection between the orbit $Gx$ and the quotient space $G_x/G_x^x$. Using this, condition (1) implies that every orbit is a finite subset of $G^0$. Since $G^0$ is Hausdorff this implies that every orbit is closed. It follows in particular that $G^0/G$ is $T_1$. Moreover, the second condition implies that $C^*(G_x^x;\sigma_x)$ is subhomogeneous and so in particular CCR for every $x\in G^0$. Consequently, $C^*(G;\Sigma)$ is CCR by Proposition \ref{Prop:GCR/CCR}.
    We need to find an upper bound on the dimensions of irreducible representations of $C^*(G;\Sigma)$. Since the orbit space of $G$ is $T_1$ we know that every irreducible representation of $C^*(G;\Sigma)$ is is induced from an isotropy group, i.e. it is of the form $\mathrm{Ind}_{G_{x}^{x}}^G \pi$ for some irreducible $\sigma_x$-representation $\pi$ of $G_x^x$. Again, it can be interpreted as an irreducible representation on the Hilbert space $\mathrm{Ind}\ H_\pi$. By $(2)$ we have $\dim(H_\pi)<\infty$ and hence another application of Lemma \ref{Lemma:DimInd} implies that $\mathrm{Ind}\ H_\pi$ is finite dimensional if and only if the quotient space $G_x/G_x^x$ is finite and in that case $\dim(\mathrm{Ind}\ H_\pi)=\abs{G_x/G_x^x}\dim(H_\pi)$. Thus, the result follows.
\end{proof}

\subsection{Nuclear dimension of subhomogeneous twisted groupoid $C^*$-algebras}
Now that we have a satisfactory description of subhomogeneous groupoid $C^*$-algebras we want to estimate their nuclear dimension. The main tool to achieve this is the following result of Winter.
\begin{thm}[\cite{W04}] \label{Thm:Winter}
Let $A$ be a separable, subhomogeneous $\mathrm{C}^*$-algebra. Then we have
$$\mathrm{dim}_{\mathrm{nuc}}(A)=\max_{k\in \mathbb{N}}\lbrace \dim \mathrm{Prim}_k(A)\rbrace.$$
\end{thm}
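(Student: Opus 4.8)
The plan is to prove the two inequalities separately, using the standard structure theory of separable subhomogeneous $\mathrm C^*$-algebras. If every irreducible representation of $A$ has dimension at most $N$, then the function $\pi\mapsto\dim\pi$ on $\mathrm{Prim}(A)$ is lower semicontinuous, so each $\mathrm{Prim}_{\ge k}(A)$ is open; this yields a finite chain of ideals $0=J_0\trianglelefteq J_1\trianglelefteq\cdots\trianglelefteq J_\ell=A$ whose subquotients $J_t/J_{t-1}$ are homogeneous, say of degree $m_t$, and hence (by the structure theory of homogeneous $\mathrm C^*$-algebras) isomorphic to $\Gamma_0(E_t)$ for a locally trivial $M_{m_t}$-bundle $E_t$ over $\mathrm{Prim}_{m_t}(A)$. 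Since $A$ is separable, each base space $\mathrm{Prim}_{m_t}(A)$ is second countable, locally compact and Hausdorff, hence metrizable, so the classical covering-dimension machinery (the subspace theorem, and the fact that the dimension of such a space equals the supremum of the dimensions of the members of any open cover) is available.

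\emph{Lower bound.} Fix $k$ and let $\Gamma_0(E)$ denote the $k$-homogeneous subquotient of $A$, a bundle over $\mathrm{Prim}_k(A)$. For any open $U\subseteq\mathrm{Prim}_k(A)$ over which $E$ is trivial, $\Gamma_0(E|_U)\cong C_0(U)\otimes M_k$ is a subquotient of $A$. Since $\mathrm{dim}_{\mathrm{nuc}}$ does not increase when passing to ideals or to quotients, and $\mathrm{dim}_{\mathrm{nuc}}(C_0(U)\otimes M_k)=\mathrm{dim}_{\mathrm{nuc}}(C_0(U))=\dim U$ — invariance under tensoring with matrix algebras together with the commutative computation of Winter--Zacharias — we get $\dim U\le\mathrm{dim}_{\mathrm{nuc}}(A)$. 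Taking the supremum over a trivializing open cover of $\mathrm{Prim}_k(A)$ gives $\dim\mathrm{Prim}_k(A)\le\mathrm{dim}_{\mathrm{nuc}}(A)$, hence $\max_k\dim\mathrm{Prim}_k(A)\le\mathrm{dim}_{\mathrm{nuc}}(A)$.

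\emph{Upper bound.} Write $d=\max_k\dim\mathrm{Prim}_k(A)$ and induct on the chain length $\ell$. The base case $\ell=1$ is the homogeneous one: for $\Gamma_0(E)$ with $E$ a locally trivial $M_m$-bundle over a metrizable space $X$ of covering dimension $\le d$, choose a locally finite open cover of $X$ of order $\le d+1$ that trivializes $E$, colour it with $d+1$ colours so that equally coloured sets are pairwise disjoint, and use a subordinate partition of unity together with the local trivializations to construct, for any finite subset and $\eps>0$, a c.p.c.\ approximation $A\to F\to A$ whose upward map is c.p.c.\ order zero on each colour block; thus $\mathrm{dim}_{\mathrm{nuc}}(\Gamma_0(E))\le d$. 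For the inductive step consider the extension $0\to J\to A\to B\to 0$ in which $J=J_1$ is the smallest nonzero ideal of the chain — homogeneous, so $\mathrm{dim}_{\mathrm{nuc}}(J)\le d$ by the base case — and $B=A/J_1$ has a chain of length $\ell-1$, so $\mathrm{dim}_{\mathrm{nuc}}(B)\le d$ by induction. Given a finite $\mathcal F\subseteq A$ and $\eps>0$, take a $(d+1)$-coloured c.p.c.\ approximation $\psi_B\colon B\to F_B$, $\phi_B\colon F_B\to B$ of the images of $\mathcal F$; lift $\phi_B$ colour block by colour block to a c.p.c.\ order-zero map $\widetilde\phi_B\colon F_B\to A$ (c.p.c.\ order-zero maps out of finite-dimensional algebras lift along quotients); and approximate the residual error, which lies approximately in $J$, by a $(d+1)$-coloured c.p.c.\ approximation of $J$. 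The two approximations must then be amalgamated into a single $(d+1)$-coloured one.

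This last amalgamation is where I expect the real difficulty to lie: a generic extension only gives $\mathrm{dim}_{\mathrm{nuc}}(A)\le\mathrm{dim}_{\mathrm{nuc}}(J)+\mathrm{dim}_{\mathrm{nuc}}(B)+1$, so naively superposing the two approximations doubles the colour count to $2(d+1)$. To avoid this one inserts a quasi-central approximate unit $(e_\lambda)$ for $J$ in $A$ (available since $A$ is separable) and decomposes $a\approx e_\lambda^{1/2}ae_\lambda^{1/2}+(1-e_\lambda)^{1/2}a(1-e_\lambda)^{1/2}$: the first summand lies in $J$ and is absorbed into the $J$-approximation, while the second is handled by $\widetilde\phi_B$ conjugated by $(1-e_\lambda)^{1/2}$, which pushes its range approximately off $J$. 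One then needs, for each colour $i$, the $i$-th block of this conjugated lift and the $i$-th block of the $J$-approximation to have approximately orthogonal ranges, so that they may be amalgamated into a single c.p.c.\ order-zero map; carrying out this orthogonalization with controlled error is precisely where the local triviality of $J=\Gamma_0(E)$ — the presence of matrix units over a dimension-$\le d$ base, compatible with the bundle-like structure of $A$ near that stratum — is essential. An equivalent packaging of the argument is to realise $A$ as an inductive limit of recursive subhomogeneous algebras and induct on the recursion length, with the gluing taking place along the closed attaching sets; and since nuclear dimension and decomposition rank agree for subhomogeneous $\mathrm C^*$-algebras, the same argument bounds both.
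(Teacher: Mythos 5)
First, a point of reference: the paper does not prove this statement at all --- Theorem~\ref{Thm:Winter} is imported as a black box from Winter \cite{W04} (where it is established for decomposition rank; the nuclear dimension version follows because the two invariants agree for subhomogeneous algebras, the lower bound holding verbatim for $\dim_{\mathrm{nuc}}$). So your proposal must stand as a free-standing proof of Winter's theorem. Your lower bound is correct and essentially complete: the stratification of $\mathrm{Prim}(A)$ into locally closed Hausdorff pieces, Fell's local triviality for homogeneous algebras, monotonicity of $\dim_{\mathrm{nuc}}$ under subquotients, the identity $\dim_{\mathrm{nuc}}(C_0(U)\otimes M_k)=\dim U$, and the open sum theorem for metrizable spaces do give $\max_k\dim\mathrm{Prim}_k(A)\le\dim_{\mathrm{nuc}}(A)$. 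The homogeneous base case of the upper bound is also standard, although one cannot literally ``colour'' an arbitrary order-$(d+1)$ cover with $d+1$ colours so that like-coloured members are disjoint; one must invoke the Ostrand-type theorem that a $d$-dimensional metrizable space admits arbitrarily fine locally finite open covers splitting into $d+1$ pairwise disjoint families.

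The genuine gap is exactly where you say you expect it: the amalgamation in the inductive step is asserted, not performed, and it is the entire technical content of \cite{W04}. Without it, the extension $0\to J\to A\to B\to 0$ only yields $\dim_{\mathrm{nuc}}(A)\le\dim_{\mathrm{nuc}}(J)+\dim_{\mathrm{nuc}}(B)+1\le 2d+1$, and iterating over the $\ell$ strata gives a bound that grows with $\ell$ rather than staying at $d$. The quasicentral approximate unit decomposition does make the range of the conjugated lift $(1-e)^{1/2}\widetilde\phi_B(\cdot)(1-e)^{1/2}$ approximately orthogonal to the range of $\phi_J$, but converting the sum of the two $i$-th colour blocks into a single c.p.c.\ order zero map requires (i) a stability statement for almost order zero maps whose tolerance is controlled uniformly over the unboundedly large finite-dimensional domains occurring in the approximations, and (ii) a choice of the two approximations that is compatible along the boundary of the open stratum $\mathrm{Prim}_{m_1}(A)$, i.e.\ a genuine gluing along the attaching maps. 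This is precisely the delicate part of Winter's argument (and the reason the Ng--Winter realisation of separable subhomogeneous algebras as inductive limits of recursive subhomogeneous algebras, which you allude to, is useful). As written, your text is an accurate roadmap of the known proof rather than a proof; for the purposes of this paper the theorem should simply be cited, as the authors do.
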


To use Winter's result effectively, we first need a better understanding of the topology of $\mathrm{Prim}(C^*(G;\Sigma))$. 
Recall, that the quasi-orbit space $\mathcal{Q}(G)$ of $G$ is the quotient of $G^0$ by the equivalence relation that identifies two points $x,y\in G^0$ if their orbit closures agree, i.e. $\overline{Gx}=\overline{Gy}$.

As a first step we will see that the map $p:\mathrm{Prim}(C^*(G;\Sigma))\rightarrow \mathcal{Q}(G)$ that associates to every kernel of an irreducible representation the closure of the orbit it lives on is continuous. To do this we need to make our colloquial description of $p$ a bit more precise. The key ingredient is the following Lemma:
\begin{lemma}
Let $G$ be an \'etale groupoid and $\Sigma$ a twist over $G$. Then there exists a canonical non-degenerate homomorphism
$$M:C_0(G^0)\rightarrow \mathcal{M}(C^*(G;\Sigma)).$$
\end{lemma}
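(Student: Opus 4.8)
The plan is to construct $M$ via the standard recipe for turning functions on the unit space into multipliers of a groupoid C*-algebra, adapted to the twisted setting. First I would recall that $C_0(G^0)$ can be viewed as a subalgebra of the bounded Borel sections of the line bundle $L_\Sigma$ supported on $G^0$: since $j^{-1}(G^0)\cong G^0\times\mathbb T$ via $i$, the fibre of $L_\Sigma$ over any unit is canonically $\mathbb C$, so a function $\phi\in C_0(G^0)$ determines a section $\hat\phi$ of $L_\Sigma\to G$ that is $\phi(x)$ on the fibre over $x\in G^0$ and $0$ elsewhere (this section is not continuous, but it is continuous on $G^0$, which is open in $G$ by étaleness, and vanishes off it). The idea is then to let $M(\phi)$ act on $f\in\Gamma_c(G,L_\Sigma)$ by the two convolution formulas
$$
(M(\phi)f)(g) = \phi(r(g))f(g), \qquad (f\cdot M(\phi))(g) = f(g)\phi(s(g)),
$$
which are exactly the convolutions $\hat\phi * f$ and $f*\hat\phi$ once one checks the cocycle factors $\sigma(\cdot,\cdot)$ trivialise because one argument is a unit and $\sigma$ is normalised. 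So the second step is to verify, directly from the normalisation $\sigma_x(g,x)=\sigma_x(x,h)=1$, that these two prescriptions really are left/right convolution by $\hat\phi$ and hence define commuting left and right actions of $C_0(G^0)$ on the $*$-algebra $\Gamma_c(G,L_\Sigma)$ by multipliers, compatible with the involution (i.e. $M(\bar\phi) = M(\phi)^*$ as a multiplier).

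The third step is boundedness: I would show that for each $\phi\in C_0(G^0)$ the pair of maps above extends to an adjointable (equivalently, a double-centraliser / multiplier) operator on $C^*(G;\Sigma)$, with norm $\leq\norm\phi_\infty$. The cleanest way is to observe that $M$ is a $*$-homomorphism on the dense $*$-subalgebra $C_0(G^0)\subseteq\Gamma_c$ (itself a C*-subalgebra of $C^*(G;\Sigma)$, indeed the diagonal), and that left/right multiplication by elements of a C*-subalgebra always extends to multipliers of the ambient C*-algebra; more precisely, for $\phi$ in the C*-algebra $C_0(G^0)$ sitting inside $C^*(G;\Sigma)$, the pair $(L_\phi, R_\phi)$ of left and right multiplication operators is a double centraliser, giving a $*$-homomorphism $M\colon C_0(G^0)\to\mathcal M(C^*(G;\Sigma))$, and this is norm-decreasing. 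Alternatively, and perhaps more in the spirit of the references, one can cite the corresponding statement for the reduced/full completion in \cite{R08} or \cite{SW13}; here I would keep the argument self-contained since $C_0(G^0)\hookrightarrow C^*(G;\Sigma)$ is elementary.

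The final step is non-degeneracy: I would produce an approximate unit for $C^*(G;\Sigma)$ inside $M(C_0(G^0))$. Take an approximate unit $(u_\lambda)$ of $C_0(G^0)$ consisting of compactly supported functions; then for $f\in\Gamma_c(G,L_\Sigma)$ with support a compact set $K$, choosing $u_\lambda\equiv 1$ on the compact set $r(K)\cup s(K)$ gives $M(u_\lambda)f = f = f\cdot M(u_\lambda)$ exactly, so $M(C_0(G^0))\cdot C^*(G;\Sigma)$ is dense, i.e. $M$ is non-degenerate. I do not expect any genuine obstacle here; the only mild subtlety — and the one place to be a little careful — is checking that the naive convolution formulas for $M(\phi)$ really coincide with convolution by a section of $L_\Sigma$ despite $\hat\phi$ not being continuous, i.e. that the cocycle/line-bundle bookkeeping collapses because of normalisation of $\sigma$; once that is observed, everything reduces to the general fact that a C*-subalgebra acts by multipliers, and uniqueness (``canonical'') follows since $M$ is forced on the dense subalgebra $C_0(G^0)$ and extends uniquely by continuity.
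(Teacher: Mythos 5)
Your proposal is correct and follows essentially the same route as the paper: define $(M_\varphi f)(\sigma)=\varphi(r(\sigma))f(\sigma)$ and $(fM_\varphi)(\sigma)=\varphi(s(\sigma))f(\sigma)$, check that this gives a double centraliser on the convolution algebra (hence an element of $\mathcal{M}(C^*(G;\Sigma))$), and deduce non-degeneracy from the fact that $C_0(G^0)$ contains an approximate unit for $C^*(G;\Sigma)$. The extra bookkeeping you supply (identifying $\varphi$ with a section supported on $G^0$ and using normalisation of the cocycle to see the formulas are genuine convolutions) is exactly the routine verification the paper leaves implicit.
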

\begin{proof}
For $\varphi\in C_0(G^0)$, $f\in C_c(G;\Sigma)$, and $\sigma\in \Sigma$ we define 
$$(M_\varphi f)(\sigma)=\varphi(r(\sigma))f(\sigma),\text{ and}$$
$$(fM_\varphi)(\sigma)=\varphi(s(\sigma))f(\sigma).$$
Then $M_\varphi$ acts as a double centralizer on $C_c(G;\Sigma)$ and hence extends to an element in $\mathcal{M}(C^*(G;\Sigma))$.
It is easy to see that this is a $\ast$-homomorphism. It is non-degenerate since $C_0(G^0)$ contains an approximate unit for $C^*(G;\Sigma)$.
\end{proof}

Following the discussion on page 61 in \cite{RW98}, $M$ induces a continuous restriction map
$$\mathrm{Res}_M:\mathcal{I}(C^*(G;\Sigma))\rightarrow \mathcal{I}(C_0(G^0)),$$
between the lattices of closed two sided ideals in $C^*(G;\Sigma)$ and $C_0(G^0)$, respectively, 
such that for every irreducible representation $\pi:C^*(G;\Sigma)\rightarrow B(H_\pi)$ we have
$\mathrm{Res}_M(\ker(\pi))=\ker(\overline{\pi}\circ M)$, where $\overline{\pi}:\mathcal{M}(C^*(G;\Sigma))\rightarrow B(H_\pi)$ denotes the extension of $\pi$ to the  multiplier algebra. Using this it is routine to show the following result:

\begin{lemma}
For every $x\in G^0$ and every irreducible $\sigma_x$-representation $\pi$ of $G_x^x$ we have
$\mathrm{Res}_M(\ker(\mathrm{Ind}_{G_x^x}^G\ \pi))=\{f\in C_0(G^0)\mid f|_{\overline{Gx}}=0\}$.
\end{lemma}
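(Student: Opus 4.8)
The plan is to unwind the definitions of $\mathrm{Res}_M$ and the induced representation and compute the kernel of $\overline{\mathrm{Ind}_{G_x^x}^G\ \pi}\circ M$ directly. First I would recall that, by the discussion following the previous lemma, $\mathrm{Res}_M(\ker(\mathrm{Ind}_{G_x^x}^G\ \pi)) = \ker(\overline{\mathrm{Ind}_{G_x^x}^G\ \pi}\circ M)$, so the task reduces to identifying, for $\varphi\in C_0(G^0)$, when the multiplier $M_\varphi$ acts as zero on the Hilbert space $\mathrm{Ind}\ H_\pi$. Using the concrete model from Lemma~\ref{Lemma:DimInd}, a vector $\xi\in \mathrm{Ind}\ H_\pi$ is a function $\xi\colon G_x\to H_\pi$ with the equivariance property, and the extension $\overline{\mathrm{Ind}_{G_x^x}^G\ \pi}(M_\varphi)$ should act by $(\overline{\mathrm{Ind}_{G_x^x}^G\ \pi}(M_\varphi)\xi)(g) = \varphi(r(g))\xi(g)$ for $g\in G_x$. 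This is the natural guess obtained by plugging $M_\varphi$ into the formula for the induced representation and passing to the limit; I would verify it on $f\in \Gamma_c(G,L_\Sigma)$ by a short computation with the convolution formula, noting that $M_\varphi f$ is supported where $\varphi\circ r$ is nonzero, and then extend by density and non-degeneracy.

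Given this description, $M_\varphi$ acts as $0$ on $\mathrm{Ind}\ H_\pi$ if and only if $\varphi(r(g)) = 0$ for every $g\in G_x$, equivalently $\varphi$ vanishes on $r(G_x) = Gx$. Since $\varphi$ is continuous, vanishing on $Gx$ is the same as vanishing on $\overline{Gx}$, giving exactly $\{f\in C_0(G^0)\mid f|_{\overline{Gx}}=0\}$. One has to be a little careful that an orthonormal basis vector $\varphi_{i,j}$ lives precisely on the coset $g_jG_x^x$, so that as $j$ ranges over $J$ we see all cosets and hence all of $Gx$ appears as a value of $r$ on the supports; this ensures $M_\varphi$ killing every basis vector really does force $\varphi|_{Gx}=0$ rather than something weaker.

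The main obstacle I anticipate is not conceptual but bookkeeping: making the passage from the action of $M_\varphi$ on the dense $\ast$-subalgebra $\Gamma_c(G,L_\Sigma)$ to its action as a multiplier genuinely rigorous on $\mathrm{Ind}\ H_\pi$, i.e. identifying $\overline{\mathrm{Ind}_{G_x^x}^G\ \pi}$ explicitly. The cleanest route is probably to use the Hilbert module picture of Lemma~\ref{Lemma:Mackey Induction and elementary induction are the same}: $M$ extends to a homomorphism $C_0(G^0)\to \mathcal{L}(\mathsf X)$ acting on $C_c(G_x)\subseteq \mathsf X$ by $(M_\varphi\xi)(g)=\varphi(r(g))\xi(g)$, and then $\mathrm{Ind}_\mathsf{X}\tilde\pi$ composed with this multiplier extension acts on $\mathsf X\otimes_{\tilde\pi}H_\pi$ by $M_\varphi\xi\otimes h$; transporting along the unitary $U$ of Lemma~\ref{Lemma:Mackey Induction and elementary induction are the same} recovers the formula above. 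Once that identification is in place the computation of the kernel is immediate, which is presumably why the authors call it ``routine''. I would therefore structure the proof as: (i) identify the multiplier extension of $M$ on $\mathsf X$; (ii) transport to $\mathrm{Ind}\ H_\pi$ via $U$; (iii) read off that $M_\varphi = 0$ there iff $\varphi|_{Gx}=0$ iff $\varphi|_{\overline{Gx}}=0$.
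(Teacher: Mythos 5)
Your proposal is correct and fills in exactly the ``routine'' verification the paper omits: the extension of $\mathrm{Ind}_{G_x^x}^G\,\pi$ to multipliers sends $M_\varphi$ to pointwise multiplication by $\varphi\circ r$ on $\mathrm{Ind}\ H_\pi$ (well-defined since $r$ is constant on cosets $gG_x^x$), which vanishes iff $\varphi|_{r(G_x)}=\varphi|_{Gx}=0$ iff $\varphi|_{\overline{Gx}}=0$ by continuity. Your check that the supports of the $\varphi_{i,j}$ exhaust all cosets, so that killing every vector genuinely forces vanishing on all of $Gx$, is the one point worth making explicit, and you make it.
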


\begin{prop}
The restriction of the map $\mathrm{Res}_M$ to the set of primitive ideals of $C^*(G;\Sigma)$ gives rise to a continuous map
$$p:\mathrm{Prim}(C^*(G;\Sigma))\rightarrow \mathcal{Q}(G)$$
onto the quasi-orbit space of $G$.
\end{prop}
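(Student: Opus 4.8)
The plan is to define $p$ by sending $P=\ker(\pi)\in\mathrm{Prim}(C^*(G;\Sigma))$ to the quasi-orbit on which the restriction $\overline{\pi}\circ M$ of $\pi$ to $C_0(G^0)$ is supported, and then to deduce continuity from the continuity of $\mathrm{Res}_M$ established above. Concretely: for $P=\ker(\pi)$ the ideal $\mathrm{Res}_M(P)=\ker(\overline{\pi}\circ M)$ is a closed ideal of $C_0(G^0)$, hence equals $I_{C_P}:=\{f\in C_0(G^0)\mid f|_{C_P}=0\}$ for a unique closed set $C_P\subseteq G^0$, namely the support of the representation $\overline{\pi}\circ M$ of the commutative algebra $C_0(G^0)$. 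The crux is to show that $C_P$ is always the closure of a single orbit, and then to set $p(P)$ equal to the class in $\mathcal{Q}(G)$ of any $x$ with $\overline{Gx}=C_P$; this is well defined because that class is precisely the datum of $\overline{Gx}$.

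To prove that $C_P$ is an orbit closure I would argue in two steps, and I expect this to be the main obstacle. First, $C_P$ is $G$-invariant: if $U\subseteq G$ is an open bisection and $f\in C_c(G;L_\Sigma)$ is a section supported over $U$, then $M_\varphi f=fM_{\varphi\circ\beta_U}$ for $\varphi\in C_c(r(U))$, where $\beta_U=(r|_U)\circ(s|_U)^{-1}$ is the associated partial homeomorphism of $G^0$; applying $\overline{\pi}$ shows that $\overline{\pi}(f)$ intertwines $\overline{\pi}\circ M$ with its $\beta_U$-translate, and letting $U$ range over a cover of $G$ forces $s(g)\in C_P\iff r(g)\in C_P$ for all $g\in G$. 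Since $C_P$ is closed and invariant, its complement $U_0:=G^0\setminus C_P$ is open and invariant, so $C^*(G|_{U_0};\Sigma)$ is an ideal of $C^*(G;\Sigma)$; as $\pi$ kills $C_0(U_0)=\mathrm{Res}_M(P)$ it kills this ideal, hence descends to an irreducible representation of the quotient $C^*(G|_{C_P};\Sigma)$ whose restriction to $C_0(C_P)$ is now \emph{faithful}. Second, $C_P$ then carries a dense orbit: $\ker(\pi)$ is prime, and the ideals of $C^*(G|_{C_P};\Sigma)$ coming from open invariant subsets of $C_P$ form a sublattice, order-isomorphic to that lattice of subsets, with $J_{U_1}\cap J_{U_2}=J_{U_1\cap U_2}$; since $J_U\not\subseteq\ker(\pi)$ for every nonempty such $U$ (by faithfulness on $C_0(C_P)$), primeness forces any two nonempty open invariant subsets of $C_P$ to meet, so every nonempty open invariant subset of $C_P$ is dense. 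A Baire category argument now finishes this: fixing a countable basis $(V_n)$ of $C_P$ (here second countability is used), the invariant saturation $W_n=r(s^{-1}(V_n))$ is open (as $G$ is étale), invariant, nonempty and therefore dense in $C_P$, so $\bigcap_n W_n\neq\emptyset$ by Baire, and any $x$ in this intersection has $\overline{Gx}=C_P$.

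Surjectivity of $p$ is then immediate from the preceding lemma: given $x\in G^0$, pick an irreducible $\sigma_x$-representation $\pi$ of $G_x^x$; by the irreducibility theorem above together with Lemma~\ref{Lemma:Mackey Induction and elementary induction are the same} the representation $\mathrm{Ind}_{G_x^x}^G\pi$ is irreducible, so $P:=\ker(\mathrm{Ind}_{G_x^x}^G\pi)$ is primitive, and the lemma computing $\mathrm{Res}_M$ on induced representations gives $\mathrm{Res}_M(P)=I_{\overline{Gx}}$, i.e. $C_P=\overline{Gx}$ and $p(P)=[x]$.

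For continuity I would first note that the quotient map $q\colon G^0\to\mathcal{Q}(G)$ is open: $G^0\to G^0/G$ is open because $G$ is étale, and the subsequent $T_0$-ification map is open because the saturation of an open set under topological indistinguishability is the set itself. Consequently the open subsets of $\mathcal{Q}(G)$ are exactly the sets $q(W)$ with $W\subseteq G^0$ open and invariant, and $q^{-1}(q(W))=W$. For such a $W$ one checks the equivalences $p(P)\in q(W)\iff C_P\cap W\neq\emptyset\iff C_0(W)\not\subseteq\mathrm{Res}_M(P)$, using that $\mathrm{Res}_M(P)=I_{C_P}$ and that the closed ideals of $C_0(G^0)$ are exactly the $C_0(V)$. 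Since $\{I\in\mathcal{I}(C_0(G^0))\mid C_0(W)\not\subseteq I\}=\bigcup_{f\in C_0(W)}\{I\mid f\notin I\}$ is open in the ideal lattice and $\mathrm{Res}_M$ is continuous, $p^{-1}(q(W))$ is open, so $p$ is continuous. Together with the surjectivity above this proves the proposition. As an alternative to the self-contained argument in the second paragraph, one could also invoke known Effros--Hahn-type results to the effect that every irreducible representation of a groupoid $\mathrm{C}^*$-algebra is supported on a single quasi-orbit.
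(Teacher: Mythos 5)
Your proof is correct, but it takes a genuinely different route from the paper's. The paper's argument is short: it invokes the Effros--Hahn-type fact that every irreducible representation of $C^*(G;\Sigma)$ is induced from an isotropy group, applies the preceding lemma to get $\mathrm{Res}_M(\ker\rho)=I_{\overline{Gx}}$, identifies the set of ideals of the form $I_{\overline{Gx}}$ with $\mathcal{Q}(G)$, and lets continuity be inherited from $\mathrm{Res}_M$. You instead prove directly, for an \emph{arbitrary} primitive ideal $P$, that the closed support $C_P$ of $\mathrm{Res}_M(P)$ is a single orbit closure: invariance via the intertwining relation $M_\varphi f=fM_{\varphi\circ\beta_U}$ over bisections, and density of an orbit via primeness of $P$ (disjoint invariant open sets give ideals with zero product) combined with a Baire category argument in the second-countable space $C_P$; induction of representations enters only to establish surjectivity, and continuity is verified by hand against the quotient topology on $\mathcal{Q}(G)$. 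What your approach buys is self-containedness: it needs only primeness and second countability, and in particular sidesteps the nontrivial claim that every irreducible representation is induced from a stabiliser, which the paper asserts without citation at this point (and uses again later under a $T_1$ hypothesis). It also makes explicit the point the paper leaves implicit, namely that the bijection between quasi-orbits and ideals of the form $I_{\overline{Gx}}$ is compatible with the topologies (your computation $p^{-1}(q(W))=\mathrm{Res}_M^{-1}\{I: C_0(W)\not\subseteq I\}$). What the paper's route buys is brevity and direct reuse of the displayed lemma computing $\mathrm{Res}_M$ on induced representations. The only places where your sketch should be firmed up are routine: the passage from $\pi$ killing $C_0(U_0)$ to $\pi$ killing the ideal $C^*(G|_{U_0};\Sigma)$ (via an approximate unit argument), and the identification $\overline\pi(M_\varphi)=\pi(\varphi)$ for $\varphi\in C_0(G^0)$ viewed inside $C^*(G;\Sigma)$, both of which are standard.
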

\begin{proof}
    If $I$ is a primitive ideal, then it is the kernel of some irreducible representation $\rho$ of $C^*(G;\Sigma)$. We know that every such representation is induced from a stabiliser, i.e. there exists an $x\in G^0$ and an irreducible $\sigma_x$-representation $\pi$ of $G_x^x$ such that $\rho=\mathrm{Ind}_{G_x^x}^G \pi$. The previous lemma implies $\mathrm{Res}_M(I)=\mathrm{Res}_M(\ker(\rho))=C_0(G^0\setminus \overline{Gx})$.
    But the subspace of $\mathcal{I}(C_0(G^0))$ of ideals of the form $C_0(G^0\setminus \overline{Gx})$ is homeomorphic to the quasi-orbit space. So the composition of $\mathrm{Res}_M$ (restricted and corestricted to a map $\mathrm{Prim}(C^*(G;\Sigma))\rightarrow \{I\in \mathcal{I}(C_0(G^0))\mid I=I_{\overline{Gx}}\}$) with this homeomorphism is the desired continuous map $p$.
\end{proof}

Now let $X_k:=\lbrace x\in G^0\mid \abs{G_x/G_x^x}=k\rbrace$, and $\widetilde{X}_k$ its image in $G^0/G$. 

\begin{lemma}\label{lemma: locally compact Hausdorff}
Let $G$ be an \'etale groupoid.
Then $\widetilde{X_k}$ and $X_k$ are locally compact Hausdorff spaces in the respective relative topology.
\end{lemma}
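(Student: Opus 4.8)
The plan is to prove the two assertions separately but in parallel, since the Hausdorff statement for $\widetilde{X_k}$ should follow from that of $X_k$ together with the observation that the quotient map restricted to $X_k$ is open onto $\widetilde{X_k}$. First I would recall the key local structure of an \'etale groupoid: every $g \in G$ has an open bisection neighbourhood $U$ (so $s|_U$ and $r|_U$ are homeomorphisms onto open subsets of $G^0$). I would use this to show that the function $x \mapsto \abs{G_x/G_x^x}$, restricted to the orbit of any point, is constant (there is a bijection $Gx \leftrightarrow G_x/G_x^x$), and to show that the sets $X_k$ are relatively well-behaved. The natural approach to the Hausdorff property of $X_k$ is: $X_k$ is a subspace of $G^0$, which is already Hausdorff by our standing hypothesis, so $X_k$ is automatically Hausdorff in the relative topology; the real content is local compactness, i.e.\ that $X_k$ is locally closed in $G^0$ (equivalently, open in its closure).

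For local compactness of $X_k$, the key step is to show that $X_k$ is \emph{locally closed} in $G^0$. I would argue that for each $k$, the set $\{x : \abs{G_x/G_x^x} \geq k\}$ is open: if $x$ has at least $k$ distinct cosets $g_1 G_x^x, \dots, g_k G_x^x$, pick bisection neighbourhoods $U_1, \dots, U_k$ of $g_1, \dots, g_k$ that are pairwise disjoint and disjoint from $G_x^x$-translates appropriately; then for $y$ near $x$ in $\bigcap s(U_i)$, the unique elements of $U_i \cap G_y$ represent at least $k$ distinct cosets of $G_y^y$. (Openness here is the standard ``upper semicontinuity of the number of points in a fibre'' argument for \'etale groupoids, but one must be careful to phrase it for cosets rather than for fibre cardinality directly — this is where the étale/bisection machinery is doing the work.) Then $X_k = \{x : \abs{G_x/G_x^x} \geq k\} \setminus \{x : \abs{G_x/G_x^x} \geq k+1\}$ is the intersection of an open set with a closed set, hence locally closed, hence locally compact Hausdorff in the relative topology.

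For $\widetilde{X_k}$, let $q : G^0 \to G^0/G$ denote the quotient map. Since $q$ is open (the saturation of an open set is open in an \'etale groupoid) and $X_k$ is $G$-invariant (being defined by an orbit-constant quantity), the restriction $q|_{X_k} : X_k \to \widetilde{X_k}$ is an open surjection with $\widetilde{X_k}$ carrying the quotient topology from $X_k$. Local compactness of $\widetilde{X_k}$ then follows from local compactness of $X_k$ via openness of $q|_{X_k}$. For the Hausdorff property of $\widetilde{X_k}$, I would use that distinct points of $\widetilde{X_k}$ correspond to distinct \emph{finite} orbits in $X_k$ (finite because $\abs{G_x/G_x^x} = k < \infty$ on $X_k$), and finite subsets of the Hausdorff space $G^0$ are closed; separating two such finite orbits by disjoint open saturated sets gives disjoint open neighbourhoods in $\widetilde{X_k}$. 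The main obstacle I anticipate is the first part of the local-closedness argument: verifying rigorously that $\{x : \abs{G_x/G_x^x} \geq k\}$ is open requires choosing the bisections so that the representatives stay in distinct cosets as $x$ varies, which needs a small amount of care with the $G_y^y$-action — but it is ultimately a routine application of the local homeomorphism property of $s$ and $r$ on bisections.
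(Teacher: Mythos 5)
Your treatment of $X_k$ is essentially the paper's argument: show that $X_{\leq k}=\{x\in G^0: \abs{G_x/G_x^x}\leq k\}$ is closed (equivalently, that $\{x:\abs{G_x/G_x^x}\geq k+1\}$ is open), so that $X_k$ is open in the closed set $X_{\leq k}$, hence locally closed in $G^0$ and therefore locally compact; Hausdorffness of $X_k$ is automatic. One small point of care in the openness step: choosing the bisections $U_1,\dots,U_k$ merely pairwise disjoint is not enough to keep the elements of $U_i\cap G_y$ in distinct cosets of $G_y^y$, because for $g,h\in G_y$ one has $gG_y^y=hG_y^y$ if and only if $r(g)=r(h)$. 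The correct (and routine) fix, which the paper uses, is to shrink the bisections so that the \emph{ranges} $r(U_i)$ are pairwise disjoint; under the bijection $gG_x^x\mapsto r(g)$ between $G_x/G_x^x$ and the orbit $Gx$, counting cosets becomes counting orbit points, which disjoint ranges separate.

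The genuine gap is in your Hausdorffness argument for $\widetilde{X_k}$: you propose to separate two distinct finite orbits by \emph{disjoint} open saturated sets. That is impossible in general, since the orbit space of an \'etale groupoid need not be Hausdorff even when every orbit is finite. For example, on $G^0=\bigl(\{1/n : n\geq 1\}\times\{1,2\}\bigr)\cup\{(0,1),(0,2)\}$ the \'etale equivalence relation identifying $(1/n,1)\sim(1/n,2)$ for all $n$ but keeping $(0,1)$ and $(0,2)$ in singleton orbits has all orbits of size at most two, yet every saturated open set containing $(0,1)$ meets every saturated open set containing $(0,2)$. The content of the lemma is precisely that $\widetilde{X_k}$ is Hausdorff in the \emph{relative} topology even though $G^0/G$ need not be Hausdorff. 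The paper's argument avoids your step as follows: write $Gx=\{x_1,\dots,x_k\}$ and $Gy=\{y_1,\dots,y_k\}$, choose $2k$ pairwise disjoint open sets $U_i\ni x_i$ and $V_j\ni y_j$, and set $U=\bigcap_{i}GU_i$, $V=\bigcap_{j}GV_j$. These saturated open sets may well intersect, but any $z\in U\cap X_k$ has an orbit of exactly $k$ points meeting each of the $k$ disjoint sets $U_i$, so $Gz\subseteq\bigcup_i U_i$ and in particular $z\in\bigcup_i U_i$; similarly $V\cap X_k\subseteq\bigcup_j V_j$. Hence $U\cap V\cap X_k=\emptyset$, which yields disjoint neighbourhoods of $[x]$ and $[y]$ in the relative topology on $\widetilde{X_k}$ (note that in the example above the intersection of the two saturated neighbourhoods lies entirely in $X_2$, away from $X_1$). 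You need this weaker disjointness-relative-to-$X_k$ statement; the disjointness in $G^0$ you assert does not hold.
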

%\begin{proof}
%By \cite[Lemma~5.3]{C07}, the canonical map
%$$l:G^{(0)}/G\rightarrow \widehat{C^*(G)}\rightarrow Prim(C^*(G))$$
%sending the orbit $Gx$ to the kernel of the induced representation $Ind\ 1_x$ is a continuous injection. One checks that $\widetilde{X}_k=l^{-1}(Prim_k(C^*(G)))$.
%Since $Prim_k(C^*(G))$ is locally closed, so is $\widetilde{X}_k$. To see that it is Hausdorff, let $Gx\neq Gy$ be two different orbits in $\widetilde{X}_k$. Then $l(Gx)\neq l(Gy)$ since $l$ is injective. Since $Prim_k(C^*(G))$ is Hausdorff we can now find $U_x$ and $U_y$ separating $l(x)$ and $l(y)$. Then their preimages under $l$ are open subsets of $\widetilde{X}_k$ separating $Gx$ and $Gy$.
%By composing $l$ with the canonical quotient map $G^{(0)}\rightarrow G^{(0)}/G$ we see that $X_k$ is locally closed. As a subspace of a Hausdorff space, it is clearly Hausdorff itself.
%\end{proof}

%DIRECT PROOF? This really shouldnt rely on amenability of the stabilisers?
\begin{proof}
Let us first show that $X_{\leq k}:=\{x\in G^0\mid \vert G_x/G_x^x\vert\leq k\}$ is closed. To see this we show that $G^0\setminus X_{\leq k}$ is open. Let $x\in G^0$ be a point whose orbit has at least $k+1$ distinct elements. Then there exist $g_1,\ldots, g_k\in G$ such that $x,g_1x,\ldots, g_k x$ are pairwise distinct. We also let $g_0:=x$ to have a coherent notation. For each $0\leq i\leq k$ choose an open bisection $U_i$ around $g_i$. Using that $G^0$ is Hausdorff, we can shrink the $U_i$ if necessary to assume without loss of generality that the ranges $r(U_i)$ are pairwise disjoint. But then $V:=\bigcap_{i=0}^k s(U_i)$ is an open neighbourhood of $x$ in $G^0$. By construction, the orbit of every element in $y\in V$ has at least $k+1$ elements, namely the images of $y$ under the partial homeomorphisms $V\subseteq s(U_i)\rightarrow r(U_i)$ given by the bisections $U_i$. Thus, $V\subseteq G^0\setminus X_{\leq k}$ and hence $G^0\setminus X_{\leq k}$ is open.

Since $X_k=G^0\setminus X_{\leq k-1} \cap X_{\leq k}$, it follows that $X_k$ is open in $X_{\leq k}$. We have shown that $X_k$ is locally closed in $G^0$ and hence locally compact.

Since $X_{\leq k}$ and $X_k$ are both $G$-invariant subspaces of $G^0$ the same conclusions easily follow for $\tilde{X}_{\leq k}$ and $\tilde{X}_k$ (note that $G^0/G$ is locally compact since the quotient map is open). 

    Finally, we show that $\tilde{X}_k$ is Hausdorff in the relative topology. So suppose $Gx\neq Gy$. Then in fact all the points in $Gx$ are distinct from all the points in $Gy$. Let us write $Gx=\{x_1,\ldots, x_k\}$ and $Gy=\{y_1,\ldots, y_k\}$. Since all these points are distinct we can find open neighbourhoods $U_i$ of $x_i$ and $V_i$ of $y_i$ such that the $U_i$ and $V_j$ are all pairwise disjoint. Let $U:=\bigcap_{i=1}^k GU_i$ and $V:=\bigcap_{i=1}^k GV_i$. Then $U$ and $V$ are open, $G$-invariant, and we have $Gx\subseteq U$ and $Gy\subseteq V$. Moreover, $U\cap X_k\subseteq \bigcup_{i=1}^k U_i$ and $V\cap X_k\subseteq \bigcup_{i=1}^k V_i$. But this implies $U\cap V\cap X_k=\emptyset$. It follows that $\pi(U)\cap \pi(V)=\emptyset$.
\end{proof}

Next, we restrict our attention to those primitive ideals that arise as kernels of irreducible representations of dimension $k\in \mathbb N$.

\begin{prop}\label{prop:kernels of finite dimensional reps}
The canonical map
$$p_k:\mathrm{Prim}_k(C^*(G;\Sigma))\rightarrow \bigcup_{n\mid k}\widetilde{X}_n$$
sending a representation to the orbit on which it lives is continuous. Moreover, if $x\in X_n$ for $n\leq k$ such that $n$ divides $k$, then we obtain a homeomorphism $$p_k^{-1}(Gx)\cong \mathrm{Prim}_{\frac{k}{n}}(C^*(G_x^x;\sigma_x)).$$
\end{prop}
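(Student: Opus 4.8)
The plan is to prove the two assertions in turn. For the continuity of $p_k$, I would argue as follows. We already know from the preceding Proposition that the map $p\colon\mathrm{Prim}(C^*(G;\Sigma))\to\mathcal Q(G)$ is continuous, and $p_k$ is essentially a restriction of $p$: if $\rho=\mathrm{Ind}_{G_x^x}^G\pi$ is $k$-dimensional, then by Lemma~\ref{Lemma:DimInd} we have $|G_x/G_x^x|\cdot\dim(H_\pi)=k$, so in particular $|G_x/G_x^x|=n$ divides $k$ and $x\in X_n$. Hence the image of $p_k$ really does land in $\bigcup_{n\mid k}\widetilde X_n$. To get continuity I would first observe that the subspace $\bigcup_{n\mid k}\widetilde X_n$ of $\mathcal Q(G)$ is precisely the image under $G^0\to G^0/G\to\mathcal Q(G)$ of $\bigcup_{n\mid k}X_n=\{x: |G_x/G_x^x|\text{ divides }k\}$; since each $X_n$ is locally closed (Lemma~\ref{lemma: locally compact Hausdorff}) and there are only finitely many divisors of $k$, this set behaves well, and one identifies $\bigcup_{n\mid k}\widetilde X_n$ with the corresponding subset of the quasi-orbit space carrying the relative topology. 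Then $p_k$ is the corestriction of $p\circ(\text{inclusion})$ to this subspace, which is continuous by the universal property of the subspace topology — provided one checks that the quasi-orbit class and the $G$-orbit class of such an $x$ determine each other on $X_n$, i.e. that on the locally closed, Hausdorff set $\widetilde X_n$ the orbit closure map is injective onto its image. This uses that orbits of points in $X_n$ are finite (hence closed in $G^0$), so $\overline{Gx}=Gx$ and distinct orbits give distinct quasi-orbits; thus on $\bigcup_{n\mid k}X_n$ the quasi-orbit space and orbit space agree, and $p_k$ may equivalently be viewed as landing in $\bigcup_{n\mid k}\widetilde X_n$.

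For the homeomorphism $p_k^{-1}(Gx)\cong\mathrm{Prim}_{k/n}(C^*(G_x^x;\sigma_x))$, the key structural input is the correspondence between irreducible representations of $C^*(G;\Sigma)$ living on the orbit $Gx$ and irreducible $\sigma_x$-representations of the isotropy group $G_x^x$, via the induction functor. Concretely, because $G^0/G$ is $T_1$ at this orbit (again orbits in $X_n$ are closed), every irreducible representation of $C^*(G;\Sigma)$ whose associated quasi-orbit is $\overline{Gx}=Gx$ is of the form $\mathrm{Ind}_{G_x^x}^G\pi$ for a unique (up to equivalence) $\pi\in\widehat{(G_x^x,\sigma_x)}$; this is the combination of the irreducibility theorem of Ionescu--Williams quoted above with the standard fact (used already in the proof of Proposition~\ref{prop:sh}) that over a point with closed orbit induction from the isotropy is a bijection on spectra. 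Passing to kernels, $\ker(\mathrm{Ind}_{G_x^x}^G\pi)$ depends only on the class of $\pi$, and by Lemma~\ref{Lemma:DimInd} the representation $\mathrm{Ind}_{G_x^x}^G\pi$ is $k$-dimensional precisely when $\dim(H_\pi)=k/n$. So set-theoretically we get a bijection
$$
p_k^{-1}(Gx)\;\longleftrightarrow\;\{\,[\pi]\in\widehat{(G_x^x,\sigma_x)}: \dim(H_\pi)=k/n\,\}/(\text{equal kernels})\;=\;\mathrm{Prim}_{k/n}(C^*(G_x^x;\sigma_x)),
$$
where the last identification is the definition of $\mathrm{Prim}_{k/n}$ of the subhomogeneous (indeed, in the relevant range, finite-dimensional-representation) twisted group algebra.

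The remaining, and I expect the most delicate, point is that this bijection is a \emph{homeomorphism} for the relative Jacobson topologies. I would prove this by identifying $p_k^{-1}(Gx)$ with $\mathrm{Prim}_{k/n}$ of a suitable \emph{ideal/subquotient} of $C^*(G;\Sigma)$ and invoking functoriality of the primitive-ideal space. The natural candidate is the subquotient of $C^*(G;\Sigma)$ supported over the locally closed invariant set $X_n\subseteq G^0$ (restriction to the open invariant set $X_{\le n}$, then quotient by the ideal coming from the closed invariant set $X_{\le n-1}$), which by the disintegration/Effros--Hahn machinery is Morita equivalent — over the orbit $Gx$, after further restricting to that single closed orbit — to $C^*(G_x^x;\sigma_x)$, or at least has $\mathrm{Ind}$ as a homeomorphism of the corresponding pieces of $\mathrm{Prim}$. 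Concretely: restriction of representations along the nondegenerate map $M\colon C_0(G^0)\to\mathcal M(C^*(G;\Sigma))$ and the map $p$ exhibit $p_k^{-1}(Gx)$ as $\mathrm{Prim}_k$ of the quotient $C^*(G;\Sigma)/I_{G^0\setminus Gx}$, and the standard fact that for a groupoid with a single closed orbit $Gx\cong G/G_x^x$ the algebra $C^*(G|_{Gx};\Sigma)$ is (strongly Morita) equivalent to $C^*(G_x^x;\sigma_x)$ — with induction implementing the equivalence on representations — then transfers the Jacobson topology across. The one thing to be careful about is that Morita equivalence preserves the \emph{whole} primitive ideal space homeomorphically but we want the piece $\mathrm{Prim}_k$ on the left to match $\mathrm{Prim}_{k/n}$ on the right; this is exactly the dimension bookkeeping of Lemma~\ref{Lemma:DimInd}, which says the Morita equivalence multiplies dimensions of irreducibles by the constant $n=|G_x/G_x^x|$ on this orbit, so it restricts to the claimed homeomorphism of the level sets. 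I would write this last paragraph carefully, as bundling "restrict to $X_n$, quotient to a single orbit, apply Morita equivalence, read off dimensions" is where all the topology actually gets verified.
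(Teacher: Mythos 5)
Your argument is correct and follows essentially the same route as the paper: the continuity is inherited from the quasi-orbit map $p$ (using that orbits in $X_n$ are closed, so orbits and quasi-orbits coincide there), and the homeomorphism onto $\mathrm{Prim}_{k/n}(C^*(G_x^x;\sigma_x))$ is obtained by passing to the quotient supported on the single closed orbit $Gx$, applying the Morita equivalence with $C^*(G_x^x;\sigma_x)$ via the Rieffel correspondence, and reading off dimensions with Lemma~\ref{Lemma:DimInd}. The paper packages the first of these steps as Kaplansky's homeomorphism $\mathrm{Prim}_I(C^*(G;\Sigma))\cong\mathrm{Prim}(C^*(G|_{Gx};\Sigma|_{Gx}))$ for $I=\ker q$, which is exactly your ``restrict to the closed orbit'' move.
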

\begin{proof}
Let $x\in G^0$ such that $\vert Gx\vert=n$. Then $Gx$ is a closed $G$-invariant subset of $G^0$ and hence we have a canonical quotient map
$q:C^*(G;\Sigma)\rightarrow C^*(G|_{Gx};\Sigma|_{Gx})$. Let $I=\ker q$.
Then a classical result of Kaplansky (see \cite[Proposition~3.2.1]{D77}) provides a homeomorphism 
$$\Phi:\mathrm{Prim}_I(C^*(G;\Sigma))\rightarrow \mathrm{Prim}(C^*(G|_{Gx};\Sigma|_{Gx})),$$
where $\mathrm{Prim}_I(C^*(G;\Sigma))$ is the set of two-sided primitive ideals in $C^*(G;\Sigma)$ containing $I$.
Let further $\mathsf{Z}$ be the Hilbert module implementing the Morita equivalence between $C^*(G|_{Gx};\Sigma|_{Gx})$ and $C^*(G_x^x;\sigma_x)$. The Rieffel correspondence tells us that $\mathsf{Z}$ induces a homeomoprhism between the corresponding primitive ideal spaces \cite[Corollary~3.33]{RW98} that we will denote by $\mathrm{Ind}_Z$. It follows that the composition $$\mathrm{Ind}_Z\circ\Phi:\mathrm{Prim}_I(C^*(G;\Sigma))\rightarrow \mathrm{Prim}(C^*(G_x^x;\sigma_x))$$ is a homeomorphism.
The inverse is given by $q_*\circ \mathrm{Ind}_Z=\mathrm{Ind}_X$.
We note that $p_k^{-1}(Gx)=\mathrm{Prim}_I(C^*(G;\Sigma))\cap \mathrm{Prim}_k(C^*(G;\Sigma))$. Using Lemma \ref{Lemma:DimInd} we see that the image of $\mathrm{Prim}_I(C^*(G;\Sigma))\cap \mathrm{Prim}_k(C^*(G;\Sigma))$ under $\mathrm{Ind}_Z\circ \Phi$ coincides with $\mathrm{Prim}_{\frac{k}{n}}(C^*(G_x^x;\sigma_x))$
which finishes our proof.
\end{proof}

We can now collect our findings from this section to prove:
\begin{thm}\label{Thm:DimShmg}
Let $G$ be a second-countable \'etale groupoid and $\Sigma$ be a twist over $G$ such that
\begin{equation}\label{eq:cond for subhomogeneity}
        \sup_{x\in G^{(0)}}\sup_{\pi\in \widehat{(G_x^x,\sigma_x)}} \abs{G_x/G_x^x}\cdot \dim(H_\pi)<\infty.
\end{equation}

\noindent Then the nuclear dimension of $C^*(G;\Sigma)$ can be estimated as follows:
$$\dim^{+1}_{\mathrm{nuc}}(C^*(G;\Sigma))\leq \dim^{+1}(G^0)\cdot \sup_{x\in G^0} \dim_{\mathrm{nuc}}^{+1}(C^*(G_x^x;\sigma_x)), \hbox{ and}$$
$$\sup_{x\in G^0} \dim_{\mathrm{nuc}}(C^*(G_x^x;\sigma_x))\leq\dim_{\mathrm{nuc}}(C^*(G;\Sigma)).$$
\end{thm}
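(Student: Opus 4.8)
The plan is to combine Winter's theorem (Theorem \ref{Thm:Winter}) with the structural results established earlier in the section, namely Proposition \ref{prop:sh}, Proposition \ref{prop:kernels of finite dimensional reps}, and Lemma \ref{lemma: locally compact Hausdorff}. First I would observe that condition \eqref{eq:cond for subhomogeneity} is precisely the hypothesis of Proposition \ref{prop:sh} (it forces both $\sup_x \abs{G_x/G_x^x}<\infty$ and $\sup_x\sup_\pi \dim(H_\pi)<\infty$), so $C^*(G;\Sigma)$ is subhomogeneous and Theorem \ref{Thm:Winter} applies: $\dim_{\mathrm{nuc}}(C^*(G;\Sigma))=\max_k \dim\mathrm{Prim}_k(C^*(G;\Sigma))$. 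The same condition also bounds $\abs{G_x/G_x^x}$ uniformly, so each $C^*(G_x^x;\sigma_x)$ is subhomogeneous as well and Winter's theorem applies to it too. Thus the theorem reduces to comparing the covering dimensions of the spaces $\mathrm{Prim}_k(C^*(G;\Sigma))$ with those of the $\mathrm{Prim}_m(C^*(G_x^x;\sigma_x))$, weighted appropriately by the covering dimension of $G^0$.

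For the upper bound I would argue as follows. Fix $k$. By Proposition \ref{prop:kernels of finite dimensional reps} there is a continuous surjection $p_k\colon \mathrm{Prim}_k(C^*(G;\Sigma))\to \bigcup_{n\mid k}\widetilde X_n$, and for $x\in X_n$ (with $n\mid k$) the fibre $p_k^{-1}(Gx)$ is homeomorphic to $\mathrm{Prim}_{k/n}(C^*(G_x^x;\sigma_x))$. The base space $\widetilde X_n$ is a $G$-quotient of the locally closed, hence locally compact Hausdorff, subspace $X_n\subseteq G^0$ (Lemma \ref{lemma: locally compact Hausdorff}), and since the quotient map $G^0\to G^0/G$ is open, $\dim(\widetilde X_n)\leq \dim(X_n)\leq \dim(G^0)$. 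I would then invoke a standard fibre-dimension estimate for the continuous map $p_k$ (of the form: if $f\colon Y\to Z$ is a continuous surjection of suitably nice spaces whose fibres all have covering dimension $\leq d$ and whose base has covering dimension $\leq e$, then $\dim Y \leq (d+1)(e+1)-1$, i.e.\ $\dim^{+1}Y\leq \dim^{+1}Z\cdot (d+1)$); here one takes $d=\sup_x\max_m \dim\mathrm{Prim}_m(C^*(G_x^x;\sigma_x))=\sup_x\dim_{\mathrm{nuc}}(C^*(G_x^x;\sigma_x))$ and $e=\dim(G^0)$. Taking the maximum over $k$ and applying Winter's theorem on both sides then yields
$$\dim_{\mathrm{nuc}}^{+1}(C^*(G;\Sigma))\leq \dim^{+1}(G^0)\cdot \sup_{x\in G^0}\dim_{\mathrm{nuc}}^{+1}(C^*(G_x^x;\sigma_x)).$$

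For the lower bound, fix $x\in G^0$; the map $q\colon C^*(G;\Sigma)\to C^*(G|_{Gx};\Sigma|_{Gx})$ is a quotient, and $C^*(G|_{Gx};\Sigma|_{Gx})$ is Morita equivalent to $C^*(G_x^x;\sigma_x)$. Since nuclear dimension does not increase under passing to quotients \cite{WZ10} and is a Morita invariant, we get $\dim_{\mathrm{nuc}}(C^*(G_x^x;\sigma_x))\leq\dim_{\mathrm{nuc}}(C^*(G;\Sigma))$, and taking the supremum over $x$ gives the second inequality. (Alternatively, one can see this directly on the level of primitive ideal spaces: $\mathrm{Prim}_m(C^*(G_x^x;\sigma_x))$ embeds as a locally closed subset of $\mathrm{Prim}_{m\cdot\abs{Gx}}(C^*(G;\Sigma))$ via $\mathrm{Ind}_\mathsf{X}$ by Proposition \ref{prop:kernels of finite dimensional reps}, and covering dimension is monotone under locally closed subspaces, so the claim again follows from Theorem \ref{Thm:Winter}.)

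The main obstacle is the fibre-dimension estimate for $p_k$: covering dimension is not in general well-behaved under arbitrary continuous surjections, so I would need to check that the spaces involved are nice enough (e.g.\ second-countable, locally compact, and the map $p_k$ open or at least suitably close to a fibre bundle) for such an inequality to hold. Fortunately, second-countability of $G$ is assumed, and Lemma \ref{lemma: locally compact Hausdorff} together with the explicit description of $p_k^{-1}(Gx)$ in Proposition \ref{prop:kernels of finite dimensional reps} should supply exactly the local triviality (or at least the local product structure over each $\widetilde X_n$) needed to run the standard argument; this is the step I would expect to spend the most care on.
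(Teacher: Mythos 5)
Your overall architecture matches the paper's proof exactly: reduce to subhomogeneity via Proposition~\ref{prop:sh}, apply Winter's theorem (Theorem~\ref{Thm:Winter}) on both sides, decompose $\mathrm{Prim}_k(C^*(G;\Sigma))=\bigsqcup_n p_k^{-1}(\widetilde X_n)$, bound $\dim(\widetilde X_n)=\dim(X_n)\leq\dim(G^0)$, identify the fibres via Proposition~\ref{prop:kernels of finite dimensional reps}, and obtain the lower bound from the quotient onto $C^*(G|_{Gx};\Sigma|_{Gx})$, Morita equivalence, and the permanence properties of nuclear dimension from \cite{WZ10}. All of that is exactly what the paper does.

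The one place where you diverge is the step you yourself flag as the main obstacle, and your proposed resolution is not the right one. You suggest establishing a local product structure or local triviality for $p_k$ over $\widetilde X_n$ in order to run a topological fibre-dimension argument; there is no reason for $p_k$ to be anything like a fibre bundle, and a naive fibre-dimension inequality for arbitrary continuous surjections is false. The paper sidesteps this entirely by working on the $\mathrm{C}^*$-algebra side: since $p_k^{-1}(\widetilde X_n)$ and $\widetilde X_n$ are second-countable locally compact Hausdorff (Lemma~\ref{lemma: locally compact Hausdorff}), the continuous map $p_k$ makes $C_0(p_k^{-1}(\widetilde X_n))$ into a $C_0(\widetilde X_n)$-algebra with fibres $C_0(p_k^{-1}(Gx))$, and then the nuclear-dimension estimate for $C_0(X)$-algebras \cite[Lemma~3.3]{MR3558205} gives $\dim^{+1}(p_k^{-1}(\widetilde X_n))\leq\dim^{+1}(\widetilde X_n)\cdot\sup_{x\in X_n}\dim^{+1}(p_k^{-1}(Gx))$ using only continuity of $p_k$ and closedness of points in the base. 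With that substitution your argument goes through verbatim; without it, the key inequality is unsupported.
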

\begin{proof}
We may assume that 
$\sup_x \dim_{\mathrm{nuc}}(C^*(G_x^x;\sigma_x))<\infty$ and $\dim(G^0)<\infty$ as otherwise there is nothing to show. The assumption in line (\ref{eq:cond for subhomogeneity}) implies that $C^*(G;\Sigma)$ is subhomogeneous by Proposition \ref{prop:sh}. Hence we are in a position to apply Winter's theorem \ref{Thm:Winter} and the remaining task is to estimate the dimensions of the spaces $\mathrm{Prim}_k(C^*(G;\Sigma))$ for all $k\in \mathbb N$.
Since every $k$-dimensional irreducible representation of $C^*(G;\Sigma)$ is supported on some finite orbit, we have a decomposition
$\mathrm{Prim}_k(C^*(G;\Sigma))=\bigsqcup_{n\in \mathbb N} p_k^{-1}(\tilde{X}_n)$.
It follows that
$\dim \mathrm{Prim}_k(C^*(G;\Sigma))=\max_{n\mid k} \dim(p_k^{-1}(\widetilde{X}_n))$.
{\color{blue} The} restriction of $p_k$ to a map
$$p_k^{-1}(\widetilde{X}_n)\rightarrow \widetilde{X}_n$$
is a continuous map between second countable, locally compact Hausdorff spaces (see Lemma \ref{lemma: locally compact Hausdorff}). {\color{blue} Via this map, we can view $C_0(p_k^{-1}(\widetilde{X}_n))$ as a $C_0(\widetilde{X}_n)$-algebra and hence} we can apply \cite[Lemma~3.3]{MR3558205} and Proposition \ref{prop:kernels of finite dimensional reps} to get
\begin{align*}
\dim^{+1}(p_k^{-1}(\widetilde{X}_n))&\leq \dim^{+1}(\widetilde{X}_n)\cdot\sup_{x\in X_n}\dim^{+1}(p_k^{-1}(Gx))\\
&=\dim^{+1}(\widetilde{X}_n)\cdot\sup_{x\in X_n}\dim^{+1}(\mathrm{Prim}_{\frac{k}{n}}(C^*(G_x^x;\sigma_x))).
\end{align*}
Since the inverse image of every point under the quotient map $X_n\rightarrow \widetilde{X}_n$ is finite, and using the definition of these spaces, we have 
$\dim(\widetilde{X}_n)=\dim(X_n)\leq \dim(G^0)$.
Moreover, the algebra $C^*(G_x^x;\sigma_x)$ is separable and subhomogeneous for all $x\in G^0$. Consequently, another application of Winter's theorem \ref{Thm:Winter} implies
$$\dim(\mathrm{Prim}_{\frac{k}{n}}(C^*(G_x^x;\sigma_x)))\leq \dim_{\mathrm{nuc}}(C^*(G_x^x;\sigma_x)),$$
which concludes the proof of the first inequality.

For the proof of the second inequality, recall that the general assumption that orbits are finite implies that the orbit space $G^0/G$ is $T_1$. Hence $C^*(G_x^x;\sigma_x)$ is stably isomorphic to a quotient of $C^*(G;\Sigma)$. The claimed inequality then follows from the known permanence properties of $\dim_{\mathrm{nuc}}$ in \cite{WZ10}.
\end{proof}

\begin{bem}\label{Rem} Suppose we are in the situation of Theorem \ref{Thm:DimShmg}.
If the twist $\Sigma$ is trivial, or $G$ is principal, we also have the estimate
    $$\dim(G^0)\leq \dim_{\mathrm{nuc}}(C^*(G;\Sigma)).$$
    Indeed, in either case inducing the trivial representation of $G_x^x$ gives rise to a continuous map $l:G^0\rightarrow \mathrm{Prim}(C^*(G;\Sigma))$ (see \cite{C07,CaH12}). The preimage of the representation $\mathrm{Ind}_{G_x^x}^G 1$ is precisely the orbit $Gx$, which is finite, and hence zero-dimensional. It follows that for each $k\in \mathbb N$ we have
    \begin{align*}
         \dim^{+1}(X_k)&\leq \dim^{+1}(\mathrm{Prim}_k(C^*(G;\Sigma)))\sup_{I\in \mathrm{Prim}_k(C^*(G;\Sigma))} \dim^{+1}(l^{-1}(I))\\
         &=\dim_{\mathrm{nuc}}^{+1}(C^*(G;\Sigma)).
    \end{align*}
   
    As we have seen in the proof of Lemma \ref{lemma: locally compact Hausdorff}, the space $X_{\leq k}$ is a finite union of the (relatively) open sets $X_n$, $n\leq k$ and hence $\dim(X_{\leq k})\leq \dim_{\mathrm{nuc}}(C^*(G;\Sigma))$ by the sum theorem for open sets \cite[3.5.10]{Pears}. Each of the sets $X_{\leq k}$ in turn is closed in $G^0$ and $G^0=\bigcup_{k\in \mathbb N} X_k$, and hence $\dim(G^0)\leq \dim_{\mathrm{nuc}}(C^*(G;\Sigma))$ by the countable sum theorem \cite[2.2.5]{Pears}.
\end{bem}

 Observe, that under the assumptions of Theorem \ref{Thm:DimShmg}, the twisted $\mathrm{C}^*$-algebras of the isotropy groups $C^*(G_x^x;\sigma_x)$ are all subhomogeneous. In particular, $G_x^x$ is a countable, discrete, virtually abelian group and $\sigma_x$ is type I by \cite[Theorem 2]{K83}. As the next proposition demonstrates, we can use this to replace $\dim_{\mathrm{nuc}}(C^*(G_x^x;\sigma_x))$ by the asymptotic dimension of $G_x^x$, which completes the proof of Theorem \ref{Thm:A}.

\begin{prop}\label{nucdim virab}
Let $\Gamma$ be a countable, discrete, virtually abelian group with a normalized 2-cocycle $\sigma$ with values in $\mathbb{T}$. If $\sigma$ is type I, then we have

$$
\mathrm{dim}_{\mathrm{nuc}}(C^*(\Gamma;\sigma))\leq \mathrm{asdim}(\Gamma).
$$
\end{prop}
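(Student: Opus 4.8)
The plan is to realise $C^*(\Gamma;\sigma)$ as a twisted transformation groupoid $\mathrm{C}^*$-algebra with finite isotropy and then apply Theorem~\ref{Thm:DimShmg}. Since $\Gamma$ is virtually abelian and $(\Gamma,\sigma)$ is type I, \cite[Theorem~1.1]{H81} provides a finite-index abelian subgroup $\Lambda\leqslant\Gamma$ on which $\sigma$ is symmetric; replacing $\Lambda$ by its normal core (a finite intersection of finite-index subgroups, hence still of finite index, still abelian, and with $\sigma$ still symmetric on it) I may assume that $N:=\Lambda$ is normal in $\Gamma$. A symmetric normalised $\mathbb{T}$-valued $2$-cocycle on an abelian group is a coboundary, so $\sigma|_{N\times N}=\partial b$ for some $b\colon N\to\mathbb{T}$; extending $b$ to a function $\Gamma\to\mathbb{T}$ and passing to the cohomologous cocycle $\sigma\cdot\overline{\partial b}$ --- which changes neither $C^*(\Gamma;\sigma)$ up to isomorphism nor the statement to be proved --- I would arrange that $\sigma|_{N\times N}\equiv 1$.

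With $N\trianglelefteq\Gamma$ normal, abelian and of finite index, and $\sigma$ trivial on $N$, the unitaries $\{u_n\mid n\in N\}$ in the multiplier algebra generate a copy of $C^*(N)=C(\widehat N)$, on which the finite group $F:=\Gamma/N$ acts via the dual of the conjugation action of $\Gamma$ on $N$; a standard computation then identifies $C^*(\Gamma;\sigma)$ with the twisted $\mathrm{C}^*$-algebra $C^*(G;\Sigma)$ of the transformation groupoid $G:=\widehat N\rtimes F$ for an appropriate twist $\Sigma$ (encoding the $C(\widehat N,\mathbb T)$-valued obstruction cocycle coming from a chosen lift $F\to\Gamma$). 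Since $N$ is countable, $\widehat N$ is compact metrizable, so $G$ is a second-countable, locally compact, Hausdorff étale groupoid with unit space $\widehat N$; its isotropy groups $G_\chi^\chi=\mathrm{Stab}_F(\chi)$ are finite, so the twisted group $\mathrm{C}^*$-algebras $C^*(G_\chi^\chi;\sigma_\chi)$ are finite-dimensional and condition \eqref{eq:cond for subhomogeneity} holds (all quantities being bounded by $\abs F^2$). Theorem~\ref{Thm:DimShmg} then yields
\begin{align*}
\dim_{\mathrm{nuc}}^{+1}(C^*(\Gamma;\sigma))=\dim_{\mathrm{nuc}}^{+1}(C^*(G;\Sigma))&\leq\dim^{+1}(\widehat N)\cdot\sup_{\chi}\dim^{+1}_{\mathrm{nuc}}(C^*(G_\chi^\chi;\sigma_\chi))\\
&=\dim^{+1}(\widehat N),
\end{align*}
that is, $\dim_{\mathrm{nuc}}(C^*(\Gamma;\sigma))\leq\dim(\widehat N)$.

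It then remains to show $\dim(\widehat N)\leq\mathrm{asdim}(\Gamma)$, and this is the only place where I would appeal to results outside the paper (if $\mathrm{asdim}(\Gamma)=\infty$ there is nothing to prove). By Pontryagin duality and the classical description of the covering dimension of a compact abelian group, $\dim(\widehat N)$ equals the torsion-free rank $\dim_\mathbb{Q}(N\otimes\mathbb{Q})$ of the countable abelian group $N$; by the computation of the asymptotic dimension of countable abelian groups due to Dranishnikov and Smith, this same number equals $\mathrm{asdim}(N)$; and since asymptotic dimension is a coarse invariant and $N$ has finite index in $\Gamma$, $\mathrm{asdim}(N)=\mathrm{asdim}(\Gamma)$. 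Chaining these equalities with the previous paragraph gives $\dim_{\mathrm{nuc}}(C^*(\Gamma;\sigma))\leq\dim(\widehat N)=\mathrm{asdim}(\Gamma)$. I do not expect a serious obstacle anywhere; the two points requiring care are the cohomological normalisation reducing to $\sigma|_N\equiv 1$, and the verification that the resulting (Busby--Smith) twisted transformation-group $\mathrm{C}^*$-algebra $C(\widehat N)\rtimes_{\alpha,\omega}F$ is genuinely the $\mathrm{C}^*$-algebra of a twisted étale groupoid, so that Theorem~\ref{Thm:DimShmg} is applicable.
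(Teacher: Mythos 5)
Your argument is correct, but it takes a genuinely different route through the middle than the paper does. Both proofs begin the same way (pass to a normal, abelian, finite-index subgroup $N$ on which $\sigma$ may be assumed trivial --- the paper gets this straight from \cite[Theorem~2]{K83} rather than via symmetry on a finite-index subgroup plus Kleppner's ``symmetric implies coboundary'' lemma, but your detour is also valid) and end the same way (with the chain $\dim(\widehat N)=\dim_{\mathbb Q}(N\otimes\mathbb Q)=\mathrm{asdim}(N)=\mathrm{asdim}(\Gamma)$, which is the paper's citation of \cite[Corollary~5]{MR3022735}). In between, however, the paper never re-enters the groupoid framework: it writes $C^*(\Gamma;\sigma)\cong C^*(N)\rtimes_{\alpha,\omega}\Gamma/N$ via \cite[Proposition~4.1]{MR1002543}, observes that this is a continuous field over the orbit space $Z=\widehat N/(\Gamma/N)$ with finite-dimensional fibres $C^*(N)_{[\gamma]}\rtimes\Gamma/N$ (using \cite{MR1414338,MR3917214}), and applies the nuclear-dimension estimate for $C_0(X)$-algebras \cite[Lemma~3.3]{MR3558205} directly, noting $\dim(Z)=\dim(\widehat N)$. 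You instead repackage the same twisted crossed product as the twisted $\mathrm{C}^*$-algebra of the transformation groupoid $\widehat N\rtimes(\Gamma/N)$, whose isotropy is finite, and invoke Theorem~\ref{Thm:DimShmg}; since that theorem is proved earlier and independently of Proposition~\ref{nucdim virab}, there is no circularity, and since its proof itself rests on \cite[Lemma~3.3]{MR3558205}, the two arguments are ultimately powered by the same fibering lemma. What your version buys is economy: it reuses the paper's own machinery in place of the continuous-field analysis of twisted crossed products. What it costs is exactly the verification you flag yourself, namely that the Busby--Smith twisted crossed product $C(\widehat N)\rtimes_{\alpha,\omega}F$ is genuinely the $\mathrm{C}^*$-algebra of a twist over the \'etale groupoid $\widehat N\rtimes F$; this is true and standard (the Packer--Raeburn cocycle $\omega$ takes values in $C(\widehat N,\mathbb T)$ and $F$ is discrete, so continuity is automatic), but it is an extra identification the paper's route avoids.
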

\begin{proof}
As $\sigma$ is type I, there is an abelian subgroup $\Lambda\subseteq \Gamma$ of finite index such that $\sigma$ is trivial on $\Lambda$ (see \cite[Theorem 2]{K83}). Since the normalizer subgroup $N_\Gamma(\Lambda)$ contains $\Lambda$, there are only finite number of conjugates $\{g_i\Lambda g_i^{-1}\}_{i=1,\ldots, k}$. Now let $N:=\cap_{i=1}^kg_i\Lambda g_i^{-1}$ which is a subgroup of $\Lambda$. It is not hard to see that $N$ is an abelian,  normal, finite-index subgroup of $\Gamma$ such that $\sigma$ is trivial on $N$.

%We claim that $C^*(\Gamma;\sigma)$ is already subhomogeneous. Indeed, if $\lambda_{\sigma,N}:N \rightarrow B(l^2(N))$ is the left regular $\sigma$-representation of $N$, then {\color{red}its induced representation of $\lambda_{\sigma,N}$ (formulation is not clear to me)} determines an embedding of $C^*(\Gamma;\sigma)$ into $C^*(N;\sigma)\otimes M_{\Gamma/N}(\mathbb{C})=C^*(N)\otimes M_{\Gamma/N}(\mathbb{C})$. By \cite[Proposition~4.1.8]{MR0548006} we can extend every irreducible representation of $C^*(\Gamma;\sigma)$ to an irreducible representation of $C^*(N)\otimes M_{\Gamma/N}(\mathbb{C})$ of the form $\rho\otimes id_{\Gamma/N}$ for $\rho \in \widehat{C^*(N)}$. Hence, it follows that $C^*(\Gamma;\sigma)$ is a separable subhomogeneous $\mathrm{C}^*$-algebra such that $\mathrm{dr}(C^*(\Gamma;\sigma))=\mathrm{dim}_{\mathrm{nuc}}(C^*(\Gamma;\sigma))$ (see Theorem~\ref{Thm:Winter}).

To show $\mathrm{dim}_{\mathrm{nuc}}(C^*(\Gamma;\sigma))\leq \mathrm{asdim}(\Gamma)$, we write $C^*(\Gamma;\sigma)\cong C^*(N)\rtimes_{\alpha,\omega}G/N$ as a twisted crossed product using \cite[Proposition~4.1]{MR1002543}. Since the quotient map $q:\widehat{N}\rightarrow Z:=\widehat{N}/(G/N)$ is continuous and open, $C^*(N)$ is a continuous field over $Z$. As $G/N$ acts trivially on $Z$, $q$ is actually $G/N$-equivariant. Therefore, $C^*(\Gamma;\sigma)$ is also a continuous field over $Z$ with fiber algebra $C^*(\Gamma;\sigma)_{[\gamma]}\cong C^*(N)_{[\gamma]}\rtimes_{\widetilde{\alpha},\widetilde{\omega}}G/N$, where $[\gamma]\in Z$ (see e.g. \cite[Theorem~5.1 and Corollary~5.3]{MR1414338}). As $C^*(N)_{[\gamma]}\cong \oplus_{\eta\in [\gamma]} \mathbb{C}$ by \cite[Proposition~3.2]{MR3917214} and $G/N$ is a finite group, we conclude that $C^*(\Gamma;\sigma)_{[\gamma]}$ is a finite-dimensional $C^*$-algebra. Finally, it follows from \cite[Lemma~3.3]{MR3558205} and \cite[Corollary~5]{MR3022735} that
\begin{align*}
\dim^{+1}_{\mathrm{nuc}}(C^*(\Gamma;\sigma))&\leq \dim^{+1}(Z)\cdot \sup_{[\gamma]\in Z} \dim^{+1}_{\mathrm{nuc}}(C^*(\Gamma;\sigma)_{[\gamma]})\\
&=\dim^{+1}(\widehat{N})=\mathrm{asdim}^{+1}(N) =\mathrm{asdim}^{+1}(\Gamma).
\end{align*}
\end{proof}
\begin{quest}
Do we actually have $dim_{nuc}(C^*(\Gamma;\sigma))= asdim(\Gamma)$ in Proposition~\ref{nucdim virab}?   
\end{quest}

%\begin{ex}
%{\color{red}A stably finite (higher rank) graph groupoid.} 
%\end{ex}

\section{Applications to nuclear dimension of non-principal twisted groupoid $C^*$-algebras}
\label{Sec:dad}
In this section we apply our results from Section \ref{Sec:Subhom} to show that the principality assumptions in \cite[Theorem~8.6]{GWY17} and its recent generalisation in the twisted case (see \cite[Theorem~4.1]{CDGHV22}) are redundant.
Let us first recall the definition of dynamic asymptotic dimension.

\begin{defi}{\cite[Definition~5.1]{GWY17}} Let $G$ be a locally compact, Hausdorff and étale groupoid and $d\in \mathbb{N}_0$. Then $G$ has \textbf{dynamic asymptotic dimension} at most $d$, if for every open and precompact subset $K\subseteq G$, there exists a cover of $s(K)\cup
r(K)$ by $d+1$ open subsets $U_0,\ldots,U_d$ of $G^0$ such that for each $0\leq i\leq d$, the open subgroupoid $\langle K\cap G|_{U_i}\rangle$ is precompact in $G$. We write $\mathrm{dad}(G)$ for the minimal $d\in \mathbb N_0$ satisfying the above and call it the dynamic asymptotic dimension of $G$.
\end{defi}

The reader can find further information on this notion and many examples in \cite{GWY17,CDGHV22,B23}. We will go straight to our main application:

\begin{thm}\label{nucdim}
Let $G$ be a second-countable, locally compact, Hausdorff étale groupoid and let $j:\Sigma\to G$ be a twist over $G$. Then
$$\dim^{+1}_{\mathrm{nuc}}(C_r^*(G;\Sigma))\leq \mathrm{dad}^{+1}(G)\cdot \dim^{+1}(G^0).$$
\end{thm}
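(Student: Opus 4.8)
The plan is to follow the strategy of \cite{GWY17, CDGHV22} essentially verbatim, substituting our Theorem~\ref{Thm:A} (equivalently Theorem~\ref{Thm:DimShmg} together with Proposition~\ref{nucdim virab}) for the principal-case input \cite[Proposition~4.3]{CDGHV22} at the one place where it was used. Concretely, if $\mathrm{dad}(G)=\infty$ or $\dim(G^0)=\infty$ there is nothing to prove, so assume $\mathrm{dad}(G)=d<\infty$ and $\dim(G^0)<\infty$. Recall from the decomposition argument in \cite{GWY17, CDGHV22} that $C_r^*(G;\Sigma)$ can be locally approximated (in the sense needed to compute nuclear dimension, e.g.\ using the permanence properties and the fact that nuclear dimension does not increase under passage to inductive limits of subalgebras arising this way) by finitely many, in fact $d+1$, sub-$\mathrm{C}^*$-algebras of the form $C^*(\langle K\cap G|_{U_i}\rangle ;\Sigma|_{\dots})$, where $H_i:=\langle K\cap G|_{U_i}\rangle$ is an open precompact subgroupoid of $G$ and $K$ ranges over open precompact subsets of $G$. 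The first step is therefore to invoke this decomposition machinery (which is purely about the coarse geometry of $G$ and the colouring provided by $\mathrm{dad}$, and does not see the isotropy), reducing the problem to bounding $\dim_{\mathrm{nuc}}(C^*(H;\Sigma|_H))$ for an arbitrary open precompact subgroupoid $H\leq G$ with twist inherited from $\Sigma$, and then assembling the $d+1$ pieces via the estimate $\dim_{\mathrm{nuc}}^{+1}$ of a sum/approximation by $d+1$ subalgebras is at most $(d+1)$ times the sup of the individual $\dim_{\mathrm{nuc}}^{+1}$.

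The second step is the new ingredient: bounding $\dim_{\mathrm{nuc}}(C^*(H;\Sigma|_H))$ for $H$ open precompact. Since $H$ is precompact, its unit space $H^0$ is precompact open in $G^0$, so $\dim(H^0)\leq \dim(G^0)$; and precompactness forces each orbit $Hx$ to be precompact, hence (as $H^0$ is Hausdorff and the orbit map has discrete fibres because $H$ is \'etale) finite, so $\sup_x |H_x/H_x^x|<\infty$ — in fact one should check this supremum is finite uniformly, which follows because $H\subseteq \overline{K}$ is contained in a compact set so there is a global bound on the number of elements of $H$ with a given source. Similarly each isotropy group $H_x^x$ is a precompact discrete set, hence finite. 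Consequently $C^*(H_x^x;\sigma_x)$ is finite-dimensional for every $x$, with $\dim_{\mathrm{nuc}}(C^*(H_x^x;\sigma_x))=0$ and the subhomogeneity hypothesis \eqref{eq:cond for subhomogeneity} is satisfied for $H$. Applying Theorem~\ref{Thm:DimShmg} to $H$ then gives
$$\dim_{\mathrm{nuc}}^{+1}(C^*(H;\Sigma|_H))\leq \dim^{+1}(H^0)\cdot \sup_{x} \dim_{\mathrm{nuc}}^{+1}(C^*(H_x^x;\sigma_x)) = \dim^{+1}(H^0)\cdot 1\leq \dim^{+1}(G^0).$$
(One could alternatively phrase this via Proposition~\ref{nucdim virab}, but since the isotropy groups are finite the bound $\mathrm{asdim}=0$ is immediate and Theorem~\ref{Thm:DimShmg} suffices.)

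The third step is bookkeeping: feeding the bound $\dim_{\mathrm{nuc}}^{+1}(C^*(H_i;\Sigma|_{H_i}))\leq \dim^{+1}(G^0)$ for each of the $d+1$ colours into the decomposition estimate yields $\dim_{\mathrm{nuc}}^{+1}(C_r^*(G;\Sigma))\leq (d+1)\cdot \dim^{+1}(G^0)=\mathrm{dad}^{+1}(G)\cdot\dim^{+1}(G^0)$, as claimed. I expect the main obstacle to be purely expository rather than mathematical: one must cite the decomposition argument of \cite{GWY17, CDGHV22} precisely enough that the reader sees it used only the \emph{structure} of the subgroupoids $H_i$ (open precompact, with inherited twist) and \emph{not} their principality, so that our Theorem~\ref{Thm:DimShmg} is a legitimate drop-in replacement; some care is also needed to confirm that the sub-$\mathrm{C}^*$-algebras appearing there are genuinely of the form $C^*(H;\Sigma|_H)$ (full, not reduced — but for precompact, hence by a standard argument amenable in the relevant sense, groupoids these agree) so that Theorem~\ref{Thm:DimShmg} applies on the nose. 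A minor technical point to verify carefully is the uniform finiteness of $|H_x/H_x^x|$ and $|H_x^x|$ over $x\in H^0$, which is where precompactness of $H$ (equivalently, $\overline{H}$ compact in $G$) is essential.
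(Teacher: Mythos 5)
Your proposal is correct and follows essentially the same route as the paper's proof: both run the decomposition argument of \cite{GWY17,CDGHV22} unchanged (including the reduction to compact unit space and the full-versus-reduced issue settled by amenability of the precompact subgroupoids $H_i$), and both substitute Theorem~\ref{Thm:DimShmg} for the principal-case input, verifying its hypotheses via the uniform finiteness of $|(H_i)_x|$ coming from covering $H_i$ by finitely many open bisections. The only cosmetic difference is that the paper cites the twisted Peter--Weyl theorem to bound the dimensions of irreducible $\sigma_x$-representations, whereas you deduce the same bound from finite-dimensionality of $C^*((H_i)_x^x;\sigma_x)$; both are fine.
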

\begin{proof}
The main idea {\color{red} is to follow the the proof of \cite[Theorem~4.1]{CDGHV22} and replace the invocation of \cite[Proposition~ 4.3]{CDGHV22} therein by our Theorem~\ref{Thm:DimShmg}.}

We may suppose without loss of generality that $d:= \mathrm{dad}(G)<\infty$ and $N:=\dim(G^0)<\infty$ as otherwise there is nothing to show.

It is furthermore sufficient to consider the case where $G^0$ is compact. Indeed, if $G^0$ is not compact, we consider the Alexandrov groupoid $\tilde{G}$ of $G$ and the Alexandrov twist $\tilde{\Sigma}$ over $\tilde{G}$. It follows from \cite[Proposition~3.13, Lemma~2.6 and Lemma~3.8]{CDGHV22} that $\mathrm{dad}(G)=\mathrm{dad}(\tilde{G})$, $\dim(\tilde{G}^0)=\dim(G^0)$ and $C_r^*(\tilde{G};\tilde{\Sigma})$ is the minimal unitization of $C_r^*(G;\Sigma)$. Therefore, $C_r^*(\tilde{G};\tilde{\Sigma})$ and $C_r^*(G;\Sigma)$ have the same nuclear dimension (see \cite[Remark~2.11]{WZ10}).  

Let us now assume that the unit space $G^0$ is actually compact.
Let $\mathcal{F}$ be a finite subset of $C_c(G;\Sigma)\backslash \{0\}$ and let $\epsilon>0$. There exists a compact subset $K$ of $\Sigma$ such that $f\in \mathcal{F}$ implies $\text{supp} f\subseteq K$. Since both $K^{-1}$ and $j^{-1}(j(K))$ are compact sets (see \cite[Lemma~2.2]{CDGHV22}), we may assume that $K=K^{-1}$ and that $K=j^{-1}(j(K))$. Since $G^0=\Sigma^0$ is compact and open in $\Sigma$, we may also assume that $G^0\subseteq K$ and hence that $1\in \mathcal{F}$.

Let $V\subseteq G$ be an open and precompact neighborhood of $j(K)$, and let $\delta$ be as in \cite[Lemma~4.5 (2)]{CDGHV22} for $\frac{\epsilon}{(d+1)\max_{f\in \mathcal{F}}||f||_{C^*_r(G;\Sigma)}} $, $K$ and $V$. Since $G$ has dynamic asymptotic dimension $d$, applying \cite[Proposition~7.1]{GWY17} to $\delta$ and the precompact, open subset $V$ of $G$ gives
\begin{itemize}
\item[(1)] open subsets $U_0,\ldots,U_d$ covering $G^0=r_G(V)\cup s_G(V)$ such that the subgroupoids $H_i$ generated by $\{\gamma\in V:s_G(\gamma),r_G(\gamma)\in U_i\}$ are open and precompact in $G$ for $0\leq i\leq d$;
\item[(2)] continuous and compactly supported functions $h_i:G^0\rightarrow [0,1]$ with support in $U_i$ such that for $x\in G^0$ we have $\sum_{i=0}^dh_i(x)^2=1$, and for $\gamma\in V$ and $0\leq i\leq d$ we have
$$
\sup_{\gamma\in V}|h_i(s_G(\gamma))-h_i(r_G(\gamma))|<\delta.
$$
\end{itemize}

Let $0\leq i\leq d$. Because $H_i$ is open, $j^{-1}(H_i)$ is a twist over $H_i$ by \cite[Lemma~2.5]{CDGHV22}. Since each $H_i$ is a second-countable \'etale open subgroupoid which is precompact in $G$, we can cover $H_i$ by finitely many open bisections $U_1,\ldots, U_M$. It follows that $(H_i)_x$ and $(H_i)_x^x$ have at most $M$ elements for all $x\in H_i^0$. By the twisted version of the Peter-Weyl Theorem (see for example \cite[Corollary~7.15]{CSST22}), it follows that each $H_i$ satisfies the assumptions of Theorem~\ref{Thm:DimShmg} and hence that $C^*(H_i;j^{-1}(H_i))$ has nuclear dimension at most the covering dimension of $H_i^0$, which is bounded by $N$. {\color{red} As each $H_i$ is an amenable groupoid by \cite[Lemma~A.10]{GWY16}, its full and reduced twisted groupoid $C^*$-algebras coincide.} We can thus follow the argument in \cite[Theorem~4.1]{CDGHV22} again to obtain the estimate $\dim_{\mathrm{nuc}}(C_r^*(G;\Sigma))\leq (N+1)(d+1)-1$ from \cite[Proposition~4.2]{CDGHV22}. As the remaining {\color{red} details are} identical to the proof in \cite[Theorem~4.1]{CDGHV22}, we omit them.
\end{proof}

The following result provides us with a class of not necessarily free actions to which one can apply Theorem \ref{nucdim}.

\begin{thm}\label{dad D_inf}
The dynamic asymptotic dimension of any minimal action $D_\infty \curvearrowright X$ of the infinite dihedral group $D_\infty$ on an infinite compact Hausdorff space $X$ is one.

\smallskip

If we further assume that $X$ is second-countable and has finite covering dimension, then $C(X)\rtimes D_\infty$ is classifiable
by its Elliott invariant and has nuclear dimension at most one.
\end{thm}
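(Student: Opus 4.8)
The plan is to prove $\mathrm{dad}(X\rtimes D_\infty)=1$ and then feed this into Theorem~\ref{nucdim}, sharpening the resulting estimate by classification theory. Write $D_\infty=\langle t,a\rangle$ with $a^2=e$ and $ata=t^{-1}$, so that $\ZZ=\langle t\rangle$ is a normal subgroup of index two. The lower bound is immediate: if $\mathrm{dad}(X\rtimes D_\infty)$ were $0$, then applying the definition to the open precompact set $K:=X\times\{e,t,t^{-1}\}$ would force a single open set $U_0\supseteq s(K)\cup r(K)=X$, hence $U_0=X$, with $\langle K\rangle$ precompact; but $\langle K\rangle=X\times\ZZ$ is not precompact in $X\times D_\infty$. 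Thus $\mathrm{dad}(X\rtimes D_\infty)\geq 1$.

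For the upper bound I would first record two consequences of minimality on an infinite space: (i) the subgroup $\ZZ=\langle t\rangle$ acts \emph{freely} on $X$, since a $t$-periodic point would have a finite, hence proper closed invariant, $D_\infty$-orbit; and (ii) taking a minimal closed $\ZZ$-invariant set $C\subseteq X$, both $C\cup aC$ and $C\cap aC$ are $D_\infty$-invariant, so either $C=X$ and the $\ZZ$-action on $X$ is already minimal, or $X=C\sqcup aC$ with $C$ clopen and infinite and $\ZZ$ acting minimally on $C$. In either case $X\rtimes\ZZ$ is a disjoint union of at most two transformation groupoids of free minimal $\ZZ$-actions on infinite compact spaces, which are known to have dynamic asymptotic dimension at most one (see \cite{GWY17}).

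The remaining step, which I expect to be the main obstacle, is to upgrade a dynamic asymptotic dimension cover for $X\rtimes\ZZ$ to one for $X\rtimes D_\infty$ using the \emph{same} number of open sets. Fix an open precompact $K$, which we may take to be $X\times B_n$ for $B_n$ the ball of radius $n$ in the word metric on $D_\infty$. In that metric every $D_\infty$-orbit is a bi-infinite line (trivial isotropy) or a ray (isotropy a reflection), and $B_n$ moves orbit points a distance at most $n$; hence a block $2$-colouring of each orbit into intervals of length $N\gg n$ gives colour classes whose $B_n$-path-components have diameter $<2N$. I would realise such a colouring by two \emph{open} sets $U_0,U_1$ covering $X$ with $\langle K\cap(X\rtimes D_\infty)|_{U_i}\rangle\subseteq X\times B_{2N}$, hence precompact, using a marker set for the free $\ZZ$-action with spacing comparable to $N$ together with a reflection-symmetric partition of unity $(h_i)_{i=0,1}$ built as in \cite[Proposition~7.1]{GWY17}. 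The point is that one cannot merely $a$-symmetrise the $\ZZ$-cover, as that doubles the number of open sets and only yields $\mathrm{dad}\leq 3$; instead one must exploit that the reflections lying in $B_n$ have fixed points confined to a bounded portion of each line, so that a $K$-path inside $U_i$ which leaves this portion is forced to stay on one side of it and is therefore already controlled by the $\ZZ$-cover. Combining the two bounds yields $\mathrm{dad}(X\rtimes D_\infty)=1$.

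For the classification statement assume in addition that $X$ is second-countable, hence metrizable, with $N:=\dim(X)<\infty$. Since $D_\infty$ is amenable, $C(X)\rtimes D_\infty$ coincides with $C_r^*(X\rtimes D_\infty)$, is separable, unital and nuclear and satisfies the UCT, and Theorem~\ref{nucdim} with $\mathrm{dad}(X\rtimes D_\infty)=1$ gives $\dim_{\mathrm{nuc}}(C(X)\rtimes D_\infty)\leq 2N+1<\infty$. To sharpen this to $\dim_{\mathrm{nuc}}\leq 1$ and to deduce classifiability I would invoke the dichotomy above via the identification $C(X)\rtimes D_\infty\cong (C(X)\rtimes\ZZ)\rtimes_{\hat a}\ZZ/2$, where $\hat a$ denotes the order-two automorphism induced by $a$: in case (ii) this is $\cong M_2(C(C)\rtimes\ZZ)$, while in case (i) it is either simple or, when $\hat a$ is inner, a direct sum of two copies of $C(X)\rtimes\ZZ$. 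In every case $C(X)\rtimes D_\infty$ is, up to Morita equivalence and finite direct sums, a crossed product of $C(X)$ by a minimal free $\ZZ$-action on a finite-dimensional metrizable space, or a $\ZZ/2$-crossed product of such. Each of these algebras is simple, separable, unital, nuclear, satisfies the UCT, and has finite nuclear dimension (by Theorem~\ref{nucdim}, together with the stability of finite nuclear dimension under crossed products by finite groups), hence is $\mathcal{Z}$-stable by Winter's theorem, hence classifiable by its Elliott invariant and of nuclear dimension at most one by the theorem of Castillejos, Evington, Tikuisis, White and Winter; since these two conclusions pass to finite direct sums and matrix amplifications, the claim follows.
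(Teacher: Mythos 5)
Your lower bound is fine, but the heart of the first claim --- $\mathrm{dad}(X\rtimes D_\infty)\leq 1$ --- is not actually proved. You correctly identify that one cannot simply symmetrise a two-set cover for the free minimal $\ZZ$-subaction (that would only give $\mathrm{dad}\leq 3$), and you then describe what a genuine two-set cover \emph{ought} to look like (block colourings of the line/ray orbits, markers, a reflection-symmetric partition of unity), but none of this is carried out: the open sets $U_0,U_1$ are never constructed, and the crucial claim that a $K$-path inside $U_i$ is ``already controlled by the $\ZZ$-cover'' away from the fixed locus of the reflections in $B_n$ is exactly the point that needs an argument. As it stands this is a plan, not a proof. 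The paper takes a different and complete route: by \cite[Theorem~2.2 and Definition~3.1]{MR4277767}, for a minimal action of a virtually cyclic group it suffices to verify the \emph{marker property} (a non-empty open $U$ with $gU\cap g'U=\emptyset$ for distinct $g,g'$ in a prescribed finite set $F$ and $D_\infty U=X$). This is then checked by a short stabilizer computation: minimality on an infinite space forces $\mathrm{Stab}(x)\cap\ZZ=\{e\}$, so $\mathrm{Stab}(x)$ is either trivial or $\{e,s^nt\}$, and in the latter case one translates $x$ by a suitable $s^k$ so that $\mathrm{Stab}(s^kx)=\{e,s^{2k+n}t\}$ misses $F^{-1}F\setminus\{e\}$, after which Hausdorff separation of the finitely many points $gs^kx$, $g\in F$, produces $U$. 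If you want to salvage your direct approach, you would essentially have to reprove the relevant part of \cite{MR4277767}; invoking the marker property is the missing idea.

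The second half is essentially the paper's argument (finite nuclear dimension from Theorem~\ref{nucdim}, then simplicity, UCT, $\mathcal Z$-stability and the classification theorems to get $\dim_{\mathrm{nuc}}\leq 1$), but your treatment of simplicity is shakier than it needs to be. You hedge that in case (i) the algebra might be a direct sum of two copies of $C(X)\rtimes\ZZ$; if that case really occurred the algebra would not be ``classifiable by its Elliott invariant'' in the sense of the simple classification theorems you invoke, so the hedge cannot simply be waved away by saying the conclusions ``pass to finite direct sums.'' In fact the case never occurs: since the $\ZZ$-action is free and each reflection fixes at most one point of any infinite orbit, a minimal $D_\infty$-action on an infinite compact Hausdorff space is automatically topologically free, so $C(X)\rtimes D_\infty$ is simple (the paper cites \cite[Proposition~2.6]{MR4584676} and \cite{MR1703305} for simplicity and the UCT). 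With simplicity in hand, your concluding appeal to Winter's theorem and to Castillejos--Evington--Tikuisis--White--Winter matches the paper's citations and is correct.
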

\begin{proof}
%Since $D_\infty$ contains an infinite-order element, it can not be locally finite and hence the dynamic asymptotic dimension at least one (\cite[Remark~2.2 (i)]{GWY17}).

By \cite[Theorem~2.2 and Definition~3.1]{MR4277767}, it suffices to show that any minimal action $D_\infty \curvearrowright X$ has the marker property. More precisely, we aim to show that for any finite subset $F\subseteq D_\infty$ there exists a non-empty open subset $U\subseteq X$ such that
\begin{itemize}
\item[(1)] $gU\cap g'U=\emptyset$ for any two distinct $g,g'\in F$;
\item[(2)] $X=D_\infty U$.
\end{itemize}
As $D_\infty U$ is a non-empty open invariant subset of $X$ and the action is minimal, (2) holds automatically.
To verify (1), we write 
$$
D_\infty=\mathbb{Z}\rtimes \mathbb{Z}/2\mathbb{Z}=\langle s,t\ |\  t^2=1, tsts=1 \rangle=\{s^n,s^nt\ |\ n\in \mathbb{Z}\},
$$
where $\mathbb{Z}=\langle s \rangle \lhd D_\infty$. For any $x\in X$, we consider the stabilizer $Stab(x):=\{g\in D_\infty\ | \ gx=x\}$, which has infinite index in $D_\infty$. Indeed, $[D_\infty: Stab(x)]=|D_\infty x|=\infty$ as $D_\infty x$ is dense in the infinite space $X$. Hence,
$$
[\mathbb{Z}: \mathbb{Z}\cap Stab(x)]=\frac{[D_\infty: \mathbb{Z}\cap Stab(x)]}{[D_\infty: \mathbb{Z}]}\geq \frac{[D_\infty:  Stab(x)]}{[D_\infty: \mathbb{Z}]}=\infty.
$$
As every non-trivial subgroup of $\mathbb{Z}$ has the form $n\mathbb{Z}$, we obtain $\mathbb{Z}\cap Stab(x)=\{e\}$. As $D_\infty=\{s^n,s^nt\ |\ n\in \mathbb{Z}\}$, $ Stab(x)\subseteq \{e, s^nt\ |\ n\in \mathbb{Z}\}$. Hence, it follows easily that either $Stab(x)=\{e, s^nt\}$ for some $n\in \mathbb{Z}$ or $Stab(x)=\{e\}$.

Case I: If $Stab(x)=\{e\}$, then $gx\neq g'x$ for all $g\neq g'$ in $F$. As $X$ is Hausdorff, we may find open sets $U_g\subseteq X$ for each $g\in F$ such that $gx\in U_g$ and $U_g\cap U_{g'}=\emptyset$ for any two distinct $g,g'\in F$. Then the non-empty open subset $U:=\cap_{g\in F}g^{-1}U_g$ does the job.

Case II: If $Stab(x)=\{e, s^nt\}$ for some $n\in \mathbb{Z}$, there exists $k\in \mathbb{Z}$ such that $Stab(s^kx)\cap (F^{-1}F\backslash \{e\})=\emptyset$. To see this, we write $F=\{s^i, s^jt\ |\ i\in I, j\in J\}$ for some finite subsets $I,J\subseteq \mathbb{Z}$. Using the relations $t^2=e$ and $tst^{-1}=s^{-1}$, we see that
$$
Stab(s^kx)=s^kStab(x)s^{-k}=s^k\{e, s^nt\}s^{-k}=\{e, s^{k}s^nts^{-k}\}=\{e, s^{2k+n}t\}.
$$
It follows that
\begin{align*}
Stab(s^kx)\cap (F^{-1}F\backslash \{e\})\neq \emptyset \Leftrightarrow&\  s^{2k+n}t\in F^{-1}F\backslash \{e\}\\
 \Leftrightarrow&\  s^{2k+n}t=s^{-i+j'}t\ \text{or} \ s^{j-i'}t\ \text{for some $i,i'\in I, j,j'\in J$}\\
 \Leftrightarrow&\  2k+n=j'-i \ \text{or} \ j-i' \ \text{for some $i,i'\in I, j,j'\in J$}.
\end{align*}
If we choose $k$ large enough such that $|2k+n|>\max_{i\in I, j\in J}|i-j|$, then $Stab(s^kx)\cap (F^{-1}F\backslash \{e\})=\emptyset$. In particular $g(s^kx)\ne g'(s^kx)$ for all $g\neq g'\in F$. Applying the same argument as in Case I to $s^kx$ instead of $x$, we obtain the desired non-empty open subset $U$. Hence, the action has the marker property.
\\

If we also assume that $X$ is second-countable and has finite covering dimension, then $C(X)\rtimes D_\infty$ has finite nuclear dimension by Theorem~\ref{nucdim}. By \cite[Proposition~2.6]{MR4584676} and \cite{MR1703305}, $C(X)\rtimes D_\infty$ is a simple $C^*$-algebra in the UCT class. Therefore, it is classifiable. By \cite[Theorem~A and Theorem~B]{MR4228503} and \cite[Theorem~A]{MR3583354}, $C(X)\rtimes D_\infty$ has decomposition rank (hence also nuclear dimension) at most one.
\end{proof}

\begin{ex}\label{ex:Edu}
We refer the reader to \cite{MR4130816} for certain non-free $D_\infty$-odometers, which were shown to be counterexamples to the
HK conjecture. Since they are all Cantor minimal $D_\infty$-systems, it follows from Theorem~\ref{dad D_inf} that their dynamic asymptotic dimension is one and the nuclear dimension of the associated crossed product is bounded by one. It follows in particular, that these crossed products are classifiable.
\end{ex}
The following corollary complements \cite[Corollary~2.5]{MR4277767}:
\begin{cor}\label{vircyc nuc}
Let $X$ be an infinite compact Hausdorff space. If $\Gamma$ is a virtually cyclic group acting minimally (not necessarily topologically free) on $X$, then the dynamic asymptotic dimension of $\Gamma  \curvearrowright X$ is one and
\begin{align}\label{inqe}
\dim_{\mathrm{nuc}}(C(X)\rtimes_r \Gamma)\leq 2\cdot \dim(X)+1.
\end{align}
\end{cor}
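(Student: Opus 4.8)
The plan is to reduce the statement about an arbitrary virtually cyclic group $\Gamma$ to the two building blocks for which we already have good control: finite groups and the infinite dihedral group $D_\infty$ (via Theorem~\ref{dad D_inf}). Recall the classification of virtually cyclic groups: every infinite virtually cyclic group $\Gamma$ either surjects onto $\mathbb Z$ with finite kernel, or surjects onto $D_\infty$ with finite kernel (and finite virtually cyclic groups are just finite groups). So the first step is to split into cases. If $\Gamma$ is finite, then $C(X)\rtimes_r\Gamma$ is subhomogeneous and the associated transformation groupoid has dynamic asymptotic dimension $0$; the bound on nuclear dimension then comes from the subhomogeneous machinery (or directly from $\dim_{\mathrm{nuc}}(C(X)\rtimes_r\Gamma)\le\dim(X)$ since it is a finite C*-module over $C(X/\Gamma)$), which is well within $2\dim(X)+1$. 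If $\Gamma$ is infinite virtually cyclic, I would first handle the case $\Gamma\to\mathbb Z$ (which includes the case $\Gamma=\mathbb Z$ and, more generally, any group with a finite-index copy of $\mathbb Z$): here one can invoke the established result that $\mathrm{dad}(\Gamma\curvearrowright X)=1$ for any minimal action of a group with a finite normal subgroup $N$ such that $\Gamma/N\cong\mathbb Z$; alternatively, pass to the finite-index subgroup, for which the action of $\mathbb Z$ on each minimal component has $\mathrm{dad}=1$ by \cite{GWY17}, and then use that dynamic asymptotic dimension is preserved under passing to finite-index subgroups (and hence finite-index overgroups).

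The main case is $\Gamma$ with a finite normal subgroup $N$ and $\Gamma/N\cong D_\infty$. The key step here is to show $\mathrm{dad}(\Gamma\curvearrowright X)=1$. I would argue exactly as in Theorem~\ref{dad D_inf}: by \cite[Theorem~2.2 and Definition~3.1]{MR4277767}, it suffices to verify the marker property for the action $\Gamma\curvearrowright X$. This in turn reduces to the marker property for $D_\infty\curvearrowright X$ where $D_\infty=\Gamma/N$, because a marker set for the quotient action pulls back to one for $\Gamma$ (the orbits of $\Gamma$ and of $\Gamma/N$ on $X$ coincide as sets, and finiteness of $N$ lets one absorb the finitely many translates by representatives of $N$ inside the finite set $F$). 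Hence one inherits the marker property from the $D_\infty$ case that was already proven, and $\mathrm{dad}(\Gamma\curvearrowright X)\le 1$; it is exactly $1$ and not $0$ because finite $\mathrm{dad}$ would then be $0$, forcing the groupoid to be (an extension of) a compact one, which is incompatible with $\Gamma$ being infinite acting minimally on an infinite space.

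With $\mathrm{dad}(\Gamma\curvearrowright X)=1$ in hand, the nuclear dimension bound \eqref{inqe} follows immediately from Theorem~\ref{nucdim}: the transformation groupoid $G=X\rtimes\Gamma$ is second-countable, locally compact, Hausdorff, and étale, with unit space $G^0=X$ of covering dimension $\dim(X)$, so
$$
\dim_{\mathrm{nuc}}^{+1}(C(X)\rtimes_r\Gamma)\le \mathrm{dad}^{+1}(G)\cdot\dim^{+1}(X)=2\cdot(\dim(X)+1),
$$
which rearranges to $\dim_{\mathrm{nuc}}(C(X)\rtimes_r\Gamma)\le 2\dim(X)+1$. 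The only real obstacle is the descent of the marker property from $D_\infty$ to $\Gamma$ through the finite kernel $N$; I expect this to be routine but it is the one place where a genuine (if short) argument is needed, and care is required because the stabilizers in $\Gamma$ are finite but not necessarily trivial, so one cannot directly copy Case~I of Theorem~\ref{dad D_inf} — instead one lifts the marker open set from the quotient and shrinks it to kill the finitely many nontrivial elements of $F^{-1}F$ that lie over the identity of $D_\infty$, using Hausdorffness of $X$ and finiteness of $N$.
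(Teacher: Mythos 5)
There is a genuine gap in the main case. You propose to establish the marker property for the action $\Gamma\curvearrowright X$ itself, by lifting a marker set through the finite normal subgroup $N$ and then ``shrinking $U$ to kill the finitely many nontrivial elements of $F^{-1}F$ that lie over the identity of $D_\infty$.'' This step cannot work in general: if $h\in N\setminus\{e\}$ acts trivially on $X$ (or, more generally, if its fixed-point set has nonempty interior), then $hU\cap U=U\neq\emptyset$ for \emph{every} nonempty open $U$, so no choice of $U$ satisfies condition (1) for a finite set $F$ with $h\in F^{-1}F$. The remark immediately following the corollary in the paper exhibits exactly this situation: $\mathbb{Z}\times F$ acting minimally on $X$ with $F$ acting trivially. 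So the marker property genuinely fails for $\Gamma\curvearrowright X$ in the non-topologically-free setting, and the reduction you sketch (``the orbits of $\Gamma$ and of $\Gamma/N$ on $X$ coincide'') also conflates $X$ with $X/N$: the quotient group $\Gamma/N$ does not act on $X$, only on $X/N$. The paper's route avoids this entirely: it passes to the quotient dynamical system $\Gamma/N\curvearrowright X/N$, which is a \emph{minimal} action of $\mathbb{Z}$ or $D_\infty$ on an infinite compact Hausdorff space, observes that $\mathrm{dad}(\Gamma\curvearrowright X)\leq\mathrm{dad}(\Gamma/N\curvearrowright X/N)$ (preimages of precompact subgroupoids under the quotient map of transformation groupoids have compact closure because $N$ is finite), and only then invokes \cite[Theorem~3.1]{GWY17} respectively Theorem~\ref{dad D_inf}. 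The marker property is used only for the honest $D_\infty$-action downstairs, where the stabilizer analysis of Theorem~\ref{dad D_inf} applies.

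Two smaller points. First, your case split is partly redundant: a finite group cannot act minimally on an infinite compact Hausdorff space (finite orbits are closed, hence cannot be dense), so $\Gamma$ is automatically infinite; and for the $\Gamma/N\cong\mathbb{Z}$ case your appeal to invariance of $\mathrm{dad}$ under finite-index overgroups is not something established in the paper or its references, whereas the quotient argument above handles $\mathbb{Z}$ and $D_\infty$ uniformly. Second, $X$ is only assumed compact Hausdorff, so the transformation groupoid need not be second-countable and Theorem~\ref{nucdim} does not apply directly; the paper disposes of this with a direct-limit argument reducing to the second-countable case. Your lower bound argument ($\mathrm{dad}\neq 0$) is essentially the paper's, phrased via compactness rather than local finiteness of the acting group.
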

\begin{proof}
Since $X$ is infinite and the action is minimal, the group $\Gamma$ must be infinite. As $\Gamma$ is an infinite virtually cyclic group, it has a finite normal subgroup $N\subseteq \Gamma$ such that $\Gamma/N$ is either $\mathbb{Z}$ or $D_\infty$. It is easy to deduce that the dynamic asymptotic dimension of $\Gamma  \curvearrowright X$ is bounded by the dynamic asymptotic dimension of the minimal action $\Gamma /N \curvearrowright X/N$, which is equal to one by \cite[Theorem~3.1]{GWY17} for $\Gamma/N=\mathbb{Z}$ and Theorem~\ref{dad D_inf} for $\Gamma/N=D_\infty$. The dynamic asymptotic dimension equals 0 only for actions of locally finite groups, and $\Gamma$ contains an infinite-order element. Thus, we have completed the proof of the first statement.

If $X$ is second-countable, it follows directly from Theorem~\ref{nucdim} that the inequality (\ref{inqe}) holds. If $X$ is not second-countable, the inequality (\ref{inqe}) follows from the second-countable case of $X$ via a direct limit argument (see \cite[ Lemma~1.3 ]{MR3558205} and \cite[Proposition~2.3~(iii)]{WZ10}).
\end{proof}

\begin{bem}
In Corollary~\ref{vircyc nuc}, the minimal action may \emph{not} be topologically free and the $C^*$-algebra may \emph{not} be simple. Indeed, if we consider any minimal action of $\mathbb{Z}$ on $X$ and any non-trivial finite group $F$. Then $\mathbb{Z}\times F$ acts minimally on $X$ when $F$ acts trivially on $X$. However, this minimal action of $\mathbb{Z}\times F$ is not topologically free because $X^g=X$ for all non-trivial $g\in F$.
\end{bem}

We end the paper by providing some further positive evidence towards the following open question:

\begin{quest}\label{Z-stablequesti}
Does every separable nuclear $\mathcal{Z}$-stable $C^*$-algebra $A$ have finite nuclear dimension?
\end{quest}
\begin{prop}\label{parcon}
Let $G$ be a second-countable, locally compact, Hausdorff and étale groupoid and let $\Sigma$ be a twist over $G$. Suppose that $G$ has dynamic asymptotic dimension $d$. Then the nuclear dimension of $C_r^*(G;\Sigma)\otimes \mathcal{Z}$ is at most $3d+2$, where $\mathcal{Z}$ is the Jiang-Su algebra.
\end{prop}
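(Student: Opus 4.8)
The plan is to rerun the proof of Theorem~\ref{nucdim} with every $\mathrm{C}^*$-algebra replaced by its (minimal) tensor product with $\mathcal{Z}$, and to replace the estimate $\dim_{\mathrm{nuc}}\bigl(C^*(H_i;j^{-1}(H_i))\bigr)\le\dim(H_i^0)$ used there by the \emph{dimension-independent} bound $\dim_{\mathrm{nuc}}(B\otimes\mathcal{Z})\le 2$, valid for every separable subhomogeneous $\mathrm{C}^*$-algebra $B$.

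We may assume $d:=\mathrm{dad}(G)<\infty$. As in the proof of Theorem~\ref{nucdim} one first reduces to the case where $G^0$ is compact: passing to the Alexandrov groupoid $\tilde G$ with its Alexandrov twist $\tilde\Sigma$ we have $\mathrm{dad}(\tilde G)=d$ and $C_r^*(\tilde G;\tilde\Sigma)$ is the minimal unitization of $C_r^*(G;\Sigma)$, so $C_r^*(G;\Sigma)\otimes\mathcal{Z}$ is an ideal in $C_r^*(\tilde G;\tilde\Sigma)\otimes\mathcal{Z}$ (using that $\mathcal{Z}$ is exact), and $\dim_{\mathrm{nuc}}$ passes to ideals by \cite{WZ10}. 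So assume $G^0$ is compact. The proof of Theorem~\ref{nucdim}, following \cite[Theorem~4.1]{CDGHV22}, verifies the hypotheses of \cite[Proposition~4.2]{CDGHV22} for $C_r^*(G;\Sigma)$ and the algebras $B_i:=C^*(H_i;j^{-1}(H_i))$: for every finite $\mathcal{F}\subseteq C_c(G;\Sigma)$ and every $\eps>0$ it produces open precompact subgroupoids $H_0,\dots,H_d$ of $G$ together with c.p.c.\ maps $\psi_i\colon C_r^*(G;\Sigma)\to B_i$ and $\varphi_i\colon B_i\to C_r^*(G;\Sigma)$ (the latter of the special form required in \cite[Proposition~4.2]{CDGHV22}) such that $\sum_{i=0}^d\varphi_i\circ\psi_i$ is $(\mathcal{F},\eps)$-close to the identity, and each $B_i$ is separable and subhomogeneous by Theorem~\ref{Thm:DimShmg}.

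Now apply $-\otimes\mathrm{id}_\mathcal{Z}$ to this approximation scheme. Nuclearity of $\mathcal{Z}$ ensures that $\psi_i\otimes\mathrm{id}_\mathcal{Z}$ and $\varphi_i\otimes\mathrm{id}_\mathcal{Z}$ retain all the relevant properties (complete positivity, contractivity, and the order zero property — via the Winter--Zacharias structure theorem for order zero maps), that $\sum_{i=0}^d(\varphi_i\otimes\mathrm{id}_\mathcal{Z})\circ(\psi_i\otimes\mathrm{id}_\mathcal{Z})=\bigl(\sum_{i=0}^d\varphi_i\circ\psi_i\bigr)\otimes\mathrm{id}_\mathcal{Z}$ is $(\mathcal{F}\odot F',\eps)$-close to the identity for any finite $F'\subseteq\mathcal{Z}$, and that sets of this form span a dense subset of $C_r^*(G;\Sigma)\otimes\mathcal{Z}$. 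Hence $C_r^*(G;\Sigma)\otimes\mathcal{Z}$ again satisfies the hypotheses of \cite[Proposition~4.2]{CDGHV22}, now with the $d+1$ algebras $B_i\otimes\mathcal{Z}$.

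The heart of the argument — and the step I expect to be the main obstacle — is the bound $\dim_{\mathrm{nuc}}(B\otimes\mathcal{Z})\le 2$ for every separable subhomogeneous $\mathrm{C}^*$-algebra $B$. It is genuinely needed: since $G^0$ (hence each $H_i^0$) may be infinite-dimensional, Winter's Theorem~\ref{Thm:Winter} only gives $\dim_{\mathrm{nuc}}(B_i)=\max_k\dim\mathrm{Prim}_k(B_i)\le\dim(H_i^0)$, which may be infinite, so Theorem~\ref{nucdim} alone yields nothing here. Unlike $B$ itself, the algebra $B\otimes\mathcal{Z}$ is no longer subhomogeneous, so Winter's theorem does not apply to it; the bound instead rests on $\mathcal{Z}$-absorption techniques (realising $\mathcal{Z}$ as an inductive limit of prime dimension drop algebras admitting the appropriate approximately central unital embeddings) and is independent of the topology of $\mathrm{Prim}(B)$. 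Granting this, \cite[Proposition~4.2]{CDGHV22} applied to $C_r^*(G;\Sigma)\otimes\mathcal{Z}$ and the family $\{B_i\otimes\mathcal{Z}\}_{i=0}^d$ gives
$$\dim_{\mathrm{nuc}}^{+1}\bigl(C_r^*(G;\Sigma)\otimes\mathcal{Z}\bigr)\le(d+1)\cdot\Bigl(\sup_{0\le i\le d}\dim_{\mathrm{nuc}}(B_i\otimes\mathcal{Z})+1\Bigr)\le 3(d+1),$$
that is, $\dim_{\mathrm{nuc}}(C_r^*(G;\Sigma)\otimes\mathcal{Z})\le 3d+2$, as claimed.
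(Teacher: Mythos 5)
Your proposal is correct and follows essentially the same route as the paper: reuse the subgroupoids $H_i$ and the approximation scheme from Theorem~\ref{nucdim} (via \cite[Proposition~4.2]{CDGHV22}), and replace the dimension-dependent bound on $\dim_{\mathrm{nuc}}(C^*(H_i;j^{-1}(H_i)))$ by the uniform bound $\dim_{\mathrm{nuc}}(B\otimes\mathcal{Z})\le 2$ for separable subhomogeneous $B$. The one fact you ``grant'' is exactly the external input the paper invokes: since $\mathcal{Z}$ is locally subhomogeneous, $B\otimes\mathcal{Z}$ is a $\mathcal{Z}$-stable locally subhomogeneous algebra, and \cite[Theorem~A]{MR4116643} gives the bound $2$.
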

\begin{proof}
The proof is a slight variant of the one for Theorem~\ref{nucdim}. Indeed, let $\{H_i\}_{0\leq i\leq d}$ be the second-countable étale open precompact subgroupoids of $G$ as constructed in the proof of Theorem~\ref{nucdim}. By Proposition~\ref{prop:sh}, the twisted $C^*$-algebras $C_r^*(H_i;j^{-1}(H_i))$ are all subhomogenuous. As $\mathcal{Z}$ is locally subhomogeneous, it follows from \cite[Theorem A]{MR4116643} that $C_r^*(\pi^{-1}(H_i);H_i)\otimes \mathcal{Z}$ has nuclear dimension at most 2. Therefore, $\dim_{\mathrm{nuc}}(C_r^*(G;\Sigma)\otimes \mathcal{Z})\leq (2+1)(d+1)-1=3d+2$ by following the argument in the proof of Theorem~\ref{nucdim}. 
\end{proof}

\begin{bem}
So far we have an affirmative answer to Question~\ref{Z-stablequesti} in the following three cases:
\begin{itemize}
\item if $A$ is simple (see \cite{MR4228503,MR4310098});

\item if $A$ is traceless (see \cite{MR4392219, MR2106263});

\item if $A$ is a twisted étale groupoid $C^*$-algebra with finite dynamic asymptotic dimension (see Proposition~\ref{parcon}). 
\end{itemize} 
\end{bem}
\textbf{Acknowledgement}: We wish to thank Jamie Gabe, Yongle Jiang and Eduardo Scarparo for helpful discussions on Question~\ref{Z-stablequesti} and minimal $D_\infty$-actions.
\bibliographystyle{plain}
\bibliography{references}
\end{document}